\theoremstyle{plain}
\providecommand{\customgenericname}{}
\newcommand{\newcustomtheorem}[2]{%
  \newenvironment{#1}[1]
  {%
   \renewcommand\customgenericname{#2}%
   \renewcommand\theinnercustomgeneric{##1}%
   \innercustomgeneric
  }
  {\endinnercustomgeneric}
}
\newcommand{\dotDelta}{{\vphantom{\Delta}\mathpalette\d@tD@lta\relax}}
\newcommand{\d@tD@lta}[2]{%
  \ooalign{\hidewidth$\m@th#1\mkern-1mu\cdot$\hidewidth\cr$\m@th#1\Delta$\cr}%
}
\date{}
\numberwithin{equation}{section}
\DeclareMathOperator{\re}{Re}
\DeclareMathOperator{\im}{Im}
\theoremstyle {definition} \newtheorem {defn} {Definition} [section] }
\theoremstyle {plain}  \newtheorem {thm} [defn] {Theorem}}
\theoremstyle {plain}  }
\theoremstyle {plain} }
\theoremstyle {plain} \newtheorem {lem}[defn] {Lemma}}
\theoremstyle {definition} \newtheorem {rmk}[defn] {Remark}}
\theoremstyle {plain} \newtheorem {claim}[defn] {Claim}}
\def\R{{\mathbb{R}}}
\def\C{{\mathbb{C}}}
\def\N{{\mathbb{N}}}
\def\Z{{\mathbb{Z}}}
\def\D{{\bf \dotDelta}}
\def\e{{\varepsilon}}
\newcommand{\abs}[1]{\lvert#1\rvert}
\newcommand{\norm}[1]{\left\|#1\right\|}
\newcommand{\inner}[1]{\left \langle#1\right \rangle}
\newcommand{\parenthese}[1]{\left(#1\right)}
\begin{document}

\author[Lee and Yu]{Zachary Lee and Xueying Yu}

\address{Zachary Lee 
\newline \indent Department of Mathematics, The University of Texas at Austin \indent 
\newline \indent 
2515 Speedway, PMA 8.100, Austin, TX 78712
\newline \indent 
And
\newline \indent 
Department of Mathematics, MIT \indent 
\newline \indent 77 Massachusetts Ave, Cambridge, MA 02139\indent 
}
\email{zl9868@my.utexas.edu}

\address{Xueying  Yu
\newline \indent Department of Mathematics, Oregon State University\indent 
\newline \indent  Kidder Hall 368
Corvallis, OR 97331 \indent 
\newline \indent 
And 
\newline \indent 
Department of Mathematics, University of Washington, 
\newline \indent  C138 Padelford Hall Box 354350, Seattle, WA 98195}
\email{xueying.yu@oregonstate.edu}

\title[Uniqueness continuation of fourth-order Schr\"odinger equations]{On uniqueness properties of solutions of the generalized fourth-order Schr\"odinger equations}

\subjclass[2020]{35B60, 35B45, 35Q41}

\keywords{Carleman inequality, Fourth-order Schr\"odinger equation, Logarithmic convexity, Unique continuation}

\begin{abstract}
In this paper, we study uniqueness properties of solutions to the  generalized fourth-order Schr\"odinger equations in any dimension $d$ of the following forms,
$$i \partial_t u +  \sum_{j=1}^d \partial_{x_j}^{\, 4}  u = V(t, x) u,  \quad \text{and} \quad i \partial_t u +  \sum_{j=1}^d \partial_{x_j}^{\, 4}  u + F (u, \overline{u}) = 0.$$
We show that a linear solution $u$ with fast enough decay in certain Sobolev spaces at two different times has to be trivial. Consequently, if the difference between two nonlinear solutions $u_1$ and $u_2$ decays sufficiently fast at two different times, it implies that $u_1 \equiv u_2$.

\end{abstract}

\maketitle

\setcounter{tocdepth}{1}
\tableofcontents

\parindent = 10pt     
\parskip = 5pt

\section{Introduction}
In this work, we consider both the linear  generalized fourth-order Schr\"odinger equations of the following form
\begin{align}\label{eq 4SE}
\begin{cases}
i \partial_t u +  \D^2 u = V(t, x) u , &  (t, x) \in \R \times \R^d ,\\
u(0,x) = u_0(x) ,
\end{cases}
\end{align}
and nonlinear ones of the type
\begin{align}\label{eq 4SE nonlinear}
\begin{cases}
i \partial_t u +  \D^2 u +F(u, \overline{u})=0 , &  (t, x) \in \R \times \R^d ,\\
u(0,x) = u_0(x) ,
\end{cases}
\end{align}
where $\D^2 := \sum_{j=1}^d \partial_{x_j}^{\, 4}$. Note that $\D^2$ is a fourth-order differential operator that removes the mixed derivative terms $\partial_{x_k x_k x_j x_j}$ ($k \neq j$) from the regular bi-Laplacian operator $\Delta^2$. 
We will use a slight abuse of language -- we will refer to $\D^2$ as a `separable' fourth-order Schr\"odinger operator in the rest of this work.

We establish two main results. First, we give sufficient conditions on the decay of the solution $u$ at two different times $t = 0$ and $t = 1$ which guarantee that $u \equiv 0$  is the unique solution of \eqref{eq 4SE}. Second, we deduce sufficient conditions on the decay of the difference of two solutions of \eqref{eq 4SE nonlinear} at two times $t=0$ and $t=1$ so that the two solutions are in fact equal. 
It is worth noting that in order to deduce a nonlinear unique continuation result from a linear one, the potential $V$ in \eqref{eq 4SE} must (i) allow complex values, and (ii) be time-dependent (this is because when considering the difference between two nonlinear solutions ($u$ and $v$), the difference in their nonlinearities $F(u,\overline{u}) - F(v, \overline{v})$ becomes complex-valued and time-dependent).

\subsection{Motivation}
\subsubsection{Unique continuation questions}
The study of unique continuation for partial differential equations (PDE) has been historically an active area of research. 
This line of research addresses the question of under what conditions two solutions of a PDE must coincide. 
In the context of dispersive PDEs, there are lots of works along this research line, see \cite{Z_KdV, Z_NLS, IK1, IK2, IV_JDE} and reference therein. These  uniqueness results typically assume that two solutions coincide in a large subdomain of $\R^d$ at two different times, then they conclude that they are identical on $\R^d$.

In Kenig-Ponce-Vega \cite{KPV_CPAM} and Escauriaza-Kenig-Ponce-Vega \cite{EKPV_CPDE}, the authors were motivated by Hardy's uncertainty principle \cite{Hardy} and initiated a different way to answer the unique continuation question for free Schr\"odinger equations, that is, they consider the solutions of linear Schr\"odinger equation of the following form, 
\begin{align}\label{eq LS}
 i\partial_t u +  \Delta u =  V(t,x) u, \quad (t,x) \in [0,T] \times \R^d.
\end{align}
that do not agree on a certain subset but have comparable (fast spatial Gaussian) decays at certain times.

Roughly speaking, \cite{KPV_CPAM, EKPV_CPDE} showed that if a solution has {\it fast decay} at two different times, the solution has to be trivial. Accordingly, for the nonlinear equation, they deduced the uniqueness of the solution from information on the decay of the difference of two possible solutions at two different times. Later, in a series of papers \cite{EKPV_Duke, EKPV_JEMS, EKPV_MRL,  KPV_MRL}, the authors obtained the sharpest {\it fast decay} requirement. In particular,  \cite{EKPV_Duke} obtained that for  classical Schr\"odinger equations if the potential $V$ satisfies certain boundedness properties (we neglect the assumption on $V$ here, but $V$ is allowed to be complex-valued and time-dependent, hence nonlinear uniqueness results available), then for a solution $u$ with fast enough decay, one has that the solution must be trivial.

To best connect Hardy's uncertainty principle to this form of results, let us first recall the {\it fast decay} type result in \cite{EKPV_Duke}.
\begin{customthm}{A}[\normalfont Theorem 1 in \cite{EKPV_Duke}]\label{thm Sharp}
Assume that $u \in C([0,T], L^2 (\R^d))$ verifies \eqref{eq LS}. 
$A, B >0$,   $AB > 1 /16 $ both $\norm{ e^{A \abs{x}^2} u(0,x)}_{L^2 (\R^d)}$ and $\norm{e^{B \abs{x}^2} u(1,x)}_{L^2 (\R^d)}$ are finite, and the potential $V$ satisfies certain boundedness properties. Then $u \equiv 0$. Moreover, if $AB  = \frac{1}{16} $, $u$ is  a constant multiple of $e^{-B + i/4T \abs{x}^2}$. 
\end{customthm}
Recall also the  following Hardy's uncertainty principle.
\begin{customthm}{B}[\normalfont Hardy's uncertainty principle in \cite{Hardy}]\label{thm Hardy}
 For any function $f : \R \to \C$, if the function $f$ itself and its Fourier transform $\widehat{f}$ satisfy 
 \begin{align*}
 f (x) = \mathcal{O} (e^{- A x^2}) \quad   \text{and} \quad \widehat{f} (\xi) = \mathcal{O} (e^{- 4B \xi^2})
 \end{align*}
with $A,B >0$ and $A B >\frac{1}{16}$, then $f \equiv 0$.  Moreover, if $A =B =\frac{1}{4}$, then $f (x) =Ce^{- \frac{1}{4}x^2}$.
\end{customthm}

The potential $V$ here can be thought as a perturbation of the free Schr\"odinger equation, with which the uniqueness of solutions for nonlinear equations  will be obtained as a direct consequence of Theorem \ref{thm Sharp} (by considering the equation satisfied by the difference of two nonlinear solutions). To see the heart of the problem, the discussion below safely ignores the potential, $V$.

Now one can see how Theorem \ref{thm Hardy} motivates Theorem \ref{thm Sharp} by writing the free Schr\"odinger  flow (taking $V =0$) with initial data $f$ in the following form using Fourier transforms
\begin{align}\label{eq FS}
u(t,x) := e^{it \Delta} f = (4 \pi it)^{-\frac{d}{2}} \int_{\R^d} e^{\frac{i\abs{x-y}^2}{4t}} f(y) \, dy = (2 \pi it)^{-\frac{d}{2}} e^{\frac{i \abs{x}^2}{4t}} \widehat{e^{\frac{i\abs{\cdot}^2}{4t}} f} \parenthese{\frac{x}{2t}} . 
\end{align}
Roughly speaking, the solution of the free Schr\"odinger equation at time $t$ is a rescaled multiple of the Fourier transform of the initial data. One would connect the decay requirement at time $t=1$ in Theorem \ref{thm Sharp} to the decay requirement on the Fourier transform of $f$ in Theorem \ref{thm Hardy} by simply evaluating \eqref{eq FS} at, for example, time $t=1$.

Let us point out that the {\it fast enough decay} measured in such Gaussian weight fashion is sharp since in the same work \cite{EKPV_Duke}, the authors provided an example for the threshold  case ($T\alpha \beta = 1/4$) which is a nonzero smooth solution with corresponding decay.

It is then natural to ask whether one can give quantitative unique continuation properties from two times for more general dispersive equations with a similar flavor. The answer is {\it Yes}. In \cite{EKPV_JFA}, the $k$-generalized KdV equations
\begin{align}\label{eq KdV}
\partial_t u + \partial_x^3 u + u^k \partial_x u =0, \quad (t,x) \in \R \times \R,  \quad k \in \N_+, \tag{KdV}
\end{align}
were considered and the authors obtained that the difference of two distinct solutions $u_1 -u_2$  cannot decay faster than $e^{-\lambda x^{3/2}}$ at two different times for $\lambda > 0$. 
It is worth pointing out that the decay rate $e^{-c x^{3/2}}$ corresponds to the behavior of the fundamental solution for the linear problem, which appears as a scaled Airy function. Moreover, \cite{ILP} showed that such decay assumption is optimal by finding an example solution persisting  the spatial decay that  initial data enjoys in a long time. There are also works on higher order KdV type equation \cite{Daw, Isa}.

We would like to mention that unique continuation results have been established for various dispersive models including  Schr\"odinger with gradient terms \cite{DS}, discrete Schr\"odinger equations \cite{BV, JLMP}, variable coefficient Schr\"odinger equations \cite{FLY} and Schr\"odinger equations with magnetic potential in \cite{BFGR, CF, BCF}. For further details and additional relevant references, we refer the interested reader to the aforementioned works.

As for higher order Schr\"odinger equations, a recent work by Huang-Huang-Zheng \cite{HHZ} obtained a unique continuation result of such  fast decay type  in  one spatial dimension for the model of the following form
\begin{align*}
i \partial_t u - (-\Delta)^m u = V (x) u , \quad (t,x) \in \R \times \R, \quad m = 2,3,4, \dots .
\end{align*}
The result reads that a non-trivial cannot decay faster than $e^{-\lambda \abs{x}^{\frac{2m}{2m-1}}}$ at two different times. 

In the general case,  in order  to realize the correct decay rate, one recalls the following result by H\"ormander \cite{Hor}, which is a variation of Hardy's uncertainty principle with conjugate convex weights. 
\begin{customthm}{C}[\normalfont Corollary in \cite{Hor}]\label{thm Hor}
If $\varphi$ and $\psi$ are conjugate convex functions, for example, $\varphi = \abs{x}^p /p$ and  $\psi = \abs{x}^q /q$, with $\frac{1}{p}+ \frac{1}{q} =1$. 
Then $f \equiv 0$ if
\begin{align*}
\int_{\R} \abs{f (x)} e^{\varphi (x)} \, dx <\infty , \quad \text{and} \quad \int_{\R} |\widehat{f}(\xi)| e^{\psi (\xi)} \, d\xi < \infty .
\end{align*}
\end{customthm}
The work \cite{HHZ} is considered sharp in terms of the decay exponent, since the  exponent $\frac{2m}{2m-1}$ in \cite{HHZ} agrees with  Theorem \ref{thm Hor} (recall  the kernel of associated fundamental solutions is of the form $e^{\mathcal{O} (\abs{x}^{2m})}$).

\subsubsection{Fourth-order Schr\"odinger equations}

Now let us come back to the topic of the current work.  Fourth-order Schr\"odinger equations with bi-Laplacian were introduced by Karpman \cite{Karpman} and Karpman-Shagalov \cite{KS} to investigate the stabilizing role of higher-order dispersive effects for the soliton instabilities in light propagation. They considered the following focusing  equation
\begin{align}
    i\partial_t u + \Delta u + \gamma \Delta^2 u =- |u|^{2p} u
\end{align}
for some $p\ge 1$. They report the following consequences of the sign of $\gamma$ on the physical dynamics observed: $\gamma>0$ gives rise to radiation and hence defocusing of a wave beam while $\gamma<0$ makes possible for formation of stable stationary wave beams \cite{Carles_2012}. The one dimensional equation with $p=1$ has been connected with nonlinear fiber optics and and optical solitons in gyrotropic media \cite{Carles_2012}. Still in one dimension, for $p>1$ and $\gamma>0$, no stable solitons exist \cite{Carles_2012}.
Also, the work by Fibich-Ilan-Papanicolaou \cite{FIP}  studies the self-focusing and singularity formation of such fourth-order Schr\"odinger equations from the mathematical viewpoint. 

Higher order Schr\"odinger equations are also important in modeling the behavior of spinless weakly relativistic and quantum mechanical particles. For a particle with mass $m$ (working in units where $\hbar =1$), the classical Schr\"odinger equation as it is known in physics takes the form
\begin{align}
    i\partial_t u = -\frac{1}{2m} \Delta u +V(x,t)u.
\end{align}
Here, $-\frac{1}{2m} \Delta u+V(x,t)$ represents the Hamiltonian (or energy) operator $\mathbf{p}^2/2m$, where $\mathbf{p}=-i\nabla$ is the \textit{momentum} operator. Here, of course, $\mathbf{p}^2/2m$ represents the kinetic energy of the particle and $V(x,t)$ its potential energy. It is natural to ask how this equation can be generalized to a particle with near-relativistic energy. Of course, the full solution was developed last century and involves the Dirac equation and the introduction of quantum fields. However, for sufficiently small energies (still relativistic), there exists a simpler and more direct approach that is still closely connected to the non-relativistic Schr\"odinger equation \cite{Carles_2012}. Indeed, we first note that the kinetic energy for a classical relativistic particle with momentum $\vec{p}\in \R^d$ takes the form
\begin{align}
    E(\vec{p})= mc^2 \left(\sqrt{1+|\vec{p}|^2/m^2c^2}-1\right)
\end{align}
with
\begin{align}
    \vec{p}= m\frac{\vec{v}}{\sqrt{1-|\vec{v}|^2/c^2}}
\end{align}
with $\vec{v}$ the velocity of the particle. 
As long as $|\vec{p}|<mc$, or $|\vec{v}|< c/\sqrt{2}$, we may express $E(\vec{p})$ as the following convergent infinite series
\begin{align}
    E(\vec{p}) = mc^2 \sum_{n=1}^\infty (-1)^{n+1} \alpha(n) \frac{|\vec{p}|^{2n}}{m^{2n}c^{2n}}
\end{align}
where
$\alpha(n) = \frac{1}{n} {2n-2\choose n-1} 2^{-2n+1}$. It is then natural to consider the Schr\"odinger equation (we assume $V\equiv 0$ from now on)
\begin{align}
    i\partial_t u = mc^2 \sum_{n=1}^\infty (-1)^{n+1} \alpha(n) \frac{\mathbf{p}^{2n}}{m^{2n}c^{2n}}u.
\end{align}
However, this is a nonlocal equation since the energy operator depends on an infinite amount of derivatives of $u$. We may thus focus on the truncated but local equation
\begin{align}
        i\partial_t u_N = mc^2 \sum_{n=1}^N (-1)^{n+1} \alpha(n) \frac{\mathbf{p}^{2n}}{m^{2n}c^{2n}}u_N.
\end{align}
Notice that if we take $N=2$, we have that
\begin{align}
    i\partial_t u_2 = \frac{\mathbf{p}^2}{2m}u_2- \frac{\mathbf{p}^4}{8m^3c^2}u_2 = -\frac{1}{2m} \Delta u_2 - \frac{1}{8m^3c^2} \Delta^2 u_2. 
\end{align}
We can see that fourth order derivatives may be seen as the lowest order relativistic correction for the Schr\"odinger equation. In fact, as long as the Fourier transform of the initial wavefunction $u_N(x,0)$ is supported inside the ball $\{\vec{p}\in \R^d: |\vec{p}|\le |\vec{p_0}|\}$ with $|\vec{p_0}|<mc$, and is smooth (hence $u_N(x,0)$ decays fast and the particle is well-localized), then we have that for any $T>0$ (see \cite{Carles_2012})
\begin{align}
    \sup_{t\in[0,T]} \norm{u-u_2}_{L^2(\R^d)} \le C_T \left(\frac{|\vec{p_0}|^2}{m^2c^2}\right)^3.
\end{align}
Hence, for small enough $|\vec{p_0}|/mc$, $u_2$ approximates the full solution $u$ quite well in the $L^2$ sense (and in particular much better than $u_1$, the solution to the standard Schr\"odinger equation).

The previous examples show the importance of studying Schr\"odinger equations containing fourth-order derivatives to better understand not only nonlinear effects in light propagation but also weakly-relativistic quantum systems through increased dispersion. \par 
The ``separable'', linear fourth-order Schr\"odinger in $d$ dimensions can be used more directly to model a system of weakly-relativistic $d$ quantum particles moving in one dimension (as long as we tolerate adding second order terms). Indeed, if we only keep second and fourth order terms and ignore dimensional constants for simplicity, the energy of such a  system is then the sum of the energies of the individual particles and the quantum evolution of the wavefunction $u(x_1, \ldots, x_d)$ takes the form
\begin{align}
    i\partial_t u +\frac{1}{2}\sum_{j=1}^d \partial_{x_j}^2 u + \frac{1}{8} \sum_{j=1}^d \partial_{x_j}^4 u= V(x_1, \ldots, x_d, t)u,
\end{align}
or using our notation,
\begin{align}
    i\partial_t u +\frac{1}{2}\sum_{j=1}^d \partial_{x_j}^2 u + \frac{1}{8} \dotDelta^2  u= V(x_1, \ldots, x_d, t)u.
\end{align}
Compare the above equation to \eqref{eq 4SE}. We can also write down a similar equation containing nonlinear interactions. Of course, the ``separable'' equations we study do not contain second-order derivatives, but our results suggest that obtaining unique continuation for these more general operators containing two or more differential operators of different order may be possible.

Analogues of unique continuation questions remain widely open for many high-dimensional dispersive equations, in particular, of the higher-order of Schr\"odinger equation type. As we pointed out \cite{HHZ} is the only work obtaining a unique continuation result of the {\it fast decay} for higher order Schr\"odinger equations in one spatial dimension. 
However, extending their argument to higher-dimensional analogues poses challenges. The authors themselves commented that the main obstacle lies in obtaining a suitable higher-dimensional Carleman estimate. This difficulty arises due to a potential phase degeneracy problem in the restriction estimate employed in their proof. It is worth noting that the result obtained by \cite{HHZ} pertains to linear unique continuation, but is not strong enough to deduce a nonlinear version. This limitation arises from the requirement that the potential function $V$ must be real, bounded, and not time-dependent in their analysis, whereas the nonlinear version requires $V$ to be at least complex-valued and time-dependent.

In this work, our goal is to extend a {\it fast decay} type of unique continuation propriety (initiated in \cite{KPV_CPAM, EKPV_CPDE}) to `separable' fourth-order Schr\"odinger equations (both linear and nonlinear), especially in higher dimensions. To the best of the authors’ knowledge, we believe that the current paper is the first result towards obtaining the unique continuation property of higher-order Schr\"odinger equations in higher dimensions. 
It is worth mentioning that this type of higher degree generalizations of the Schr\"odinger equation is not uncommon, see for example \cite{ACP} for the same generalization in the context of the study of pointwise convergence of Schr\"odinger operators.

\subsection{Main results and their sharpness}

Now let us present the main results. 
\begin{thm}[Linear unique continuation]\label{thm Main1}
Let $d \geq 1$. Assume that $u\in C^1([0,1]: H^6(\mathbb{R}^d))$ solves \eqref{eq 4SE} with $V(t,x), \nabla_x V(t,x), \nabla_x^{\, 2} V(t,x),  \nabla_x^{\, 3} V(t,x) \in L^{\infty} ([0,1]\times \R^d)$. If there exists $\lambda>0$ and $\alpha >\frac{4}{3}$ such that
\begin{align}\label{eq Weight1}
u(0,x), \, u(1,x)\in H^3(e^{\lambda|x|^\alpha}\,dx), 
\end{align}
and 
\begin{align}\label{eq supV}
    \lim_{r\to\infty} \int_0^1 \sup_{|x|>r} |V(t,x)|\,dt=0,
\end{align}
then $u(t,x) \equiv 0$. 
\end{thm}

\begin{thm}[Nonlinear unique continuation]\label{thm Main2}
Let $d \geq 1$. Assume that $u_1, u_2 \in C^1([0,1]: H^k(\mathbb{R}^d))$, with $k\in \Z^+, k > \max\{ \frac{d}{2},6 \}$ are strong solutions of \eqref{eq 4SE nonlinear} on $[0,1] \times \R^d$ with $F: \C^2 \to \C, F\in C^k$ and $F(0)=\partial_u F(0)= \partial_{\overline{u}} F(0)=0$. If there exists $\lambda>0$ and $\alpha >\frac{4}{3}$ such that
\begin{align}\label{eq Weight2}
u_1 (0, x) - u_2 (0, x) , \quad  u_1 (1, x) - u_2 (1, x) \in H^3(e^{\lambda|x|^\alpha}\,dx), 
\end{align}
then $u_1\equiv u_2 $. 
\end{thm}

\begin{rmk}[Decay notation]
Note that we say that $f \in L^2(e^{\lambda|x|^\alpha}\,dx)$ if 
\begin{align}
\int_{\R^d} \abs{f(x)}^2 e^{\lambda|x|^\alpha}\,dx < \infty ,
\end{align}
and that $f \in H^3(e^{\lambda|x|^\alpha}\,dx)$ if $f , \partial_{x_j} f , \partial_{x_j x_k } f, \partial_{x_j x_k, x_p} f \in L^2 (e^{\lambda|x|^\alpha}\,dx)$ for all $j,k,p =1 , \cdots d$. 
\end{rmk}

\begin{rmk}[Sharpness of the result and discussion on assumptions]
With the main results stated, let us make a few comments on the order of exponential weight, $e^{\lambda \abs{x}^{\alpha}}$,  $\alpha > \frac{4}{3}$. 
\begin{enumerate}
\item 
As one can see from \cite{EKPV_CPDE}, such super-Gaussian weight in the measurement of the decay of data is closely related to the quadratic weight in Hardy's uncertainty principle. 
For the general case, as we recalled in Theorem \ref{thm Hor}, the analogue of Gaussian weight in our case would be expected to be the conjugate convex weights, $ e^{\mathcal{O} (\abs{x}^{\frac{4}{3}})}$. This implies that our decay power $\alpha > \frac{4}{3}$ is almost sharp, even for the case of complex-valued and time-dependent potential $V(t,x)$.

\item
The decay rate which is described by $\lambda \abs{x}^{\alpha} $, $\alpha > \frac{4}{3}$ in \eqref{eq Weight1}  can be made better by replacing the exponential weight in \eqref{eq Weight1} by $ \lambda \abs{x}^{\frac{4}{3}}$, where $\lambda > \lambda_0$, for some $\lambda_0 >0$ well chosen. The choice of such $\lambda_0$ can be made using the same argument done in the proof of Theorem 1.1 in \cite{HHZ}.

\item
The $H^3$ regularity requirement for both solutions and the potential is not necessary. We included it in the statement of Theorem \ref{thm Main1} simply because in the proof of it, we need to differentiate the equation when deriving an exponential decay estimate for solutions with derivatives. In fact, by following the strategy in \cite{EKPV_JEMS} and introducing an artificial diffusion into the equation, we should be able to get rid of the regularity assumption. 
That is, we consider the modified equation (to fix the idea, we consider the $V=0$ case)
\begin{align}
\partial_t u = (A + iB) \D^2 u 
\end{align}
where $A >0$. 
An inherent decay given by the artificial diffusion $A \D^2$ allows one to do integration by parts freely and prove the solution up to certain derivatives preserves the same decay properties (via a logarithmic convexity) as the initial and terminal data without requiring extra regularity of the solution at all (since no differentiation of the equation is needed). Hence as a byproduct, we could even remove the $H^3$ regularity requirement on the solution. Then by taking the parameter $A \to 0$, a limiting argument gives the unique continuation properties that we desire.   
We do not plan to introduce any artificial diffusion in our proof, but instead make use of frequency cut-off operators to allow complex $V$, such as integration by parts, needed to obtain our results.

\item 
We note that \eqref{eq supV} is the same assumption made on $V$ in \cite{EKPV_CPDE}, and it will be verified when used in the nonlinear result.

\item 
The following example shows that our theorem is essentially sharp for $V(x)$ real-valued and constant in time. Indeed, we show that for any $V\in L^\infty(\R^d)$, there exists non-zero $u(t,x)$ such that $\norm{e^{\alpha|x|^{4/3}} u(0,x)}_{L^2(\R^d)}$ and $\norm{e^{\beta|x|^{4/3}} u(1,x)}_{L^2(\R^d)}$ are both finite for some $\alpha, \beta>0$. \par 
    Indeed, let $f(x)=e^{-2|x|^{4/3}}$. Now, let 
    \begin{align}
        u(t,x)= e^{-(\e+it)(\D^2+V)}f,
    \end{align}
    which solves
    \begin{align}
        i\partial u = \D^2 u + V(x) u. 
    \end{align}
    Next, we state a version of Lemma 2.1 from \cite{HHZ} which holds for our modified operator $\D^2+V(x)$ by virtue of the higher order heat kernel estimate that is Theorem 1 in \cite{DENG}. 

\begin{lem}
    Suppose $A, B\in\R$ and $V(x)\in L^\infty(\R^d)$ is real-valued. Then there exists $N_1, N_2>0$ independent of $A, B, V$ and $\Theta_{A,B}(\gamma)>0$ such that
    \begin{align}
        \norm{e^{\Theta_{A,B}(\gamma)|x|^{4/3}}e^{-(A+iB)(\D^2+V)}f}_{L^2} &\le N_1 e^{\omega_0 A \norm{V}_{L^\infty}}\left(1+ B^2/A^2 \right)^{n/2} \norm{e^{\gamma |x|^{4/3}}f}_{L^2}.
    \end{align}
\end{lem}
Letting $A=\e, B=t, \gamma=1$, we have
\begin{align}
    \norm{ e^{\Theta_{\e,t}(1)|x|^{4/3}} u(t,x)}_{L^2} \lesssim_{d}(1+t^2/\e^2)^{d/2} \norm{e^{|x|^{4/3}}f}_{L^2}.
\end{align}
Hence, we see that both $\norm{e^{\alpha|x|^{4/3}} u(0,x)}_{L^2(\R^d)}$ and $\norm{e^{\beta|x|^{4/3}} u(1,x)}_{L^2(\R^d)}$ are finite \emph{and} $u(t,x)$ is non-trivial, demonstrating the sharpness of our result. 

\end{enumerate}
\end{rmk}

\subsection{Outline of and challenges in the proof}

As we mentioned, in a series of works \cite{KPV_CPAM, EKPV_CPDE, EKPV_Duke, EKPV_JEMS, EKPV_MRL,  KPV_MRL}, the authors set out a systematic procedure to tackle the unique continuation problems with a {\it fast decay} flavor. This general method is based on a contradiction argument. We will outline the major step of the method below while listing the main ingredients needed in the contradiction arguments. Additionally, we will highlight the new ingredients we introduce to adapt to our specific fourth-order Schr\"odinger case (both linear and nonlinear).

\begin{enumerate}[(1)]
\item 
{\it Persistence of fast decay.}  The solution to \eqref{eq 4SE} is initially assumed to have fast decay only at times $t=0, 1$. By examining the evolution of the weighted norm $\|e^{\lambda |x|/d} \,u(t,x)\|_{L_x^2}$, one obtains an exponential decay estimate of the following form, for $\lambda >0 $,
\begin{align}\label{eq LogCon1}
\|e^{\lambda |x|/d} \,u (t,x)\|_{L_x^2} \leq C \|e^{\lambda  |x| } u (0,x)\|_{L_x^2} + C\|e^{\lambda |x|} u(1,x)\|_{L_x^2} ,  
\end{align}
via an energy estimate{\footnote{The weight $e^{\lambda |x|}$ can be upgraded to a weight $e^{\lambda |x|^\alpha}$ with a subordination-type inequality}}. This allows passing the fast decay property to any intermediate times ($0 < t < 1$) using a technique based on the earlier result \cite{KPV_CPAM}. This result is sufficient for our purposes, though it is slightly weaker than a similar logarithmic convexity result pertaining to solutions $u(t,x)$ of \eqref{eq LS}, 
\begin{align}\label{eq LogCon}
\|e^{\lambda\abs{x}^2} u (t,x)\|_{L_x^2} \leq C \|e^{\lambda\abs{x}^2} u (0,x)\|_{L_x^2}^{1-t} \|e^{\lambda\abs{x}^2} u(1,x)\|_{L_x^2}^t,
\end{align} 
for $\lambda>0$.

\item
{\it Carleman estimates with well-chosen weights.} As we mentioned earlier, the Carleman type of inequalities was introduced into the consideration of  uniqueness principles, and is now widely used to tackle unique continuation problems. Here is an example of Carleman type  estimates that were used in \cite{EKPV_CPDE}:
\begin{align}\label{eq Carleman}
C(\alpha ) \norm{e^{\alpha \phi(t,x)} g}_{L_{t,x}^2} \leq  \norm{e^{\alpha \phi(t,x)} (i \partial_t + \Delta) g}_{L_{t,x}^2} . 
\end{align}
It is worth emphasizing that the major challenge  in obtaining such type of inequality falls on hunting a suitable (carefully designed) weight function{\footnote{In the context of Carleman estimates, the weight function usually is the function appearing in the exponential. When we say a `linear weight function' or a `quadratic weight function', we mean $\phi$ is linear or quadratic in spatial variables.}} $\phi$  that allows one to get the estimate.  
As expected, one might utilize very different    weights when considering different models. In \cite{EKPV_JEMS}, the weight function is quadratic in space with a proper translation in the first spatial variable. We use a different but still quadratic weight to prove our Carleman estimate.

\item
{\it  Lower bounds for the solution.} Having derived a Carleman estimate, with proper localization of the solution, we are able to obtain an absolute lower bound away from $t=0,1$ for non-trivial solutions (supported on an annulus domain as a consequence). Let us remark that the lower bound depends greatly on the weight function chosen in the Carleman inequality. In fact, in order to reach a contradiction in the next step, such a lower bound has to match the fast decay rate of the solution.

\item
{\it A contradiction argument.} At this point, there are two different rates discussed: (a)  the fast decay rate of non-zero solutions at intermediate time  inherit the fast decay at $t=0,1$ (due to the  log-convexity); (b) the asymptotic lower bounds established via Carleman estimates. The difference between these two rates forces such solutions with assumed fast decay to be trivial, and completes the proof of unique continuation for the linear equation. To handle the nonlinear equation, we consider the difference of two solutions and the resulting nonlinearity as a potential, after which we can repeat the steps for the linear unique continuation result as the hypotheses of Theorem \ref{thm Main2} guarantee that the new potential in the non-linear setting satisfies the necessary conditions in  Theorem \ref{thm Main1}. 
\end{enumerate}

The main challenges in our paper lie in the proof of the exponential decay estimate for $L^2$ norms of solutions to an inhomogeneous version of \eqref{eq 4SE} as well as that of our Carleman inequality.

For the {\it persistence of fast decay} part, if one employs the strategy of the logarithmic convexity type (as \cite{HHZ} did in their work), tools such as introducing artificial dissipation into the equation, utilizing parabolic estimates, and subordination-type inequalities are commonly involved. However, this route only would yield decay estimates for real-valued and time-independent potentials. While these estimates are sufficient to establish a linear unique continuation result, it is not enough to deduce a nonlinear unique continuation result.
To this end, we decided to approach the problem slightly differently. Inspired by \cite{KPV_CPAM}, we in fact are able to obtain an $L^2$-based exponential decay for solutions to \eqref{eq 4SE} with respect to a measure of the form $e^{\beta|x|}, \beta>0$, then extend it via a subordination type inequality (Corollary 2.2 in \cite{EKPV_CPDE}) to a super-linear exponential measure of the form $e^{\lambda |x|^\alpha}, \lambda>0, \alpha>1$, from which we are able to prove persistence of \emph{fast decay} at two times. 
More precisely, to obtain Lemma \ref{lem con}, our fundamental decay estimate, that we later modify suitably, we must first cut off a weight function multiplying a solution to \eqref{inhomogeneous 4SE} to be able to rigorously apply various technologies such as integration by parts to the resulting $L^2$ norm of such weighted solutions. The difficult part lies in controlling the growth of $L^2$ norms cutoff and projected in frequency space of the weighted solutions to \eqref{inhomogeneous 4SE} since there is a large number of terms that each require qualitatively different techniques to suitably bound.

To obtain a {\it Carleman type inequality} for the operator $i\partial_t + \D^2$, we expect to see many more terms in the computation of commutators (arising from splitting the conjugate operator 
\begin{align}
\widehat{T}f(x):=e^{-\phi(x)}\,(\partial_{x_i})^{4} \left[e^{\phi(x)} f(x)\right]
\end{align}
into a symmetric and an anti-symmetric components) compared to the case for the operator $i\partial_t + \Delta$ (since the number of derivatives is twice as high as that of the classical Schr\"odinger case). Among these commutators,  most terms are computed manually, but in a couple of cases during the computation, we use a computational software to simplify extremely lengthy expressions. 
This is in fact the major reason why we consider the operator $\D^2$.
After such simplifications, we need to manipulate certain $L^2$ inner-products containing many terms in such a way that they can be lower-bounded in a positive fashion (for more details see the proof of Lemma \ref{lem Carleman}). We use this lower bound to derive a new Carleman inequality, which we would later use in our lower bound proof.

\subsection{Organization of the paper}
In Section \ref{sec Pre}, we discuss some notations and define some cutoff functions that will be used in the rest of the paper; in Section \ref{sec Logcon}, we present an exponential decay estimate for solutions to \eqref{eq 4SE} with {\it fast decay} and we upgrade it to a super-linear exponential decay estimate in Section \ref{sec Logcon+}; in Section \ref{sec Carleman}, we derive a Carleman inequality for the `separable' fourth-order Schr\"odinger operator; in Section \ref{sec Lower}, we prove a lower bound for the {\it fast decay} solutions; in Section \ref{sec Proof}, we prove the linear and nonlinear unique continuation results, by combining the lower bound and the exponential decay proved in previous sections.

\subsection*{Acknowledgement} Both authors would like to thank Gigliola Staffilani for suggesting this problem and Zongyuan Li and Luis Vega for very insightful conversations. Part of this work was done while the second author was in residence at the Institute for Computational and Experimental Research in Mathematics in Providence, RI, during the Hamiltonian Methods in Dispersive and Wave Evolution Equations program. Z. L. was supported by the Undergraduate Research Opportunities Program at the Massachusetts Institute of Technology. X.Y. was partially supported by an AMS-Simons travel grant.

\section{Preliminaries}\label{sec Pre}
In this section, we list some notations and define some cutoff functions that will be used in the rest of the paper.
\subsection{Notations}
We use the usual notation that $A \lesssim  B$ or $B \gtrsim A$ to denote an estimate of the form $A \leq C B$, for some constant $0 < C < \infty$ depending only on the {\it a priori} fixed constants of the problem.

We define the Fourier transform on $\R^d$ by
\begin{align}
\widehat{f} (\xi) : = \frac{1}{(2\pi)^{\frac{d}{2}}} \int_{\R^d} e^{-ix \cdot \xi } f(x) \, dx ,
\end{align}
and Fourier inversion 
\begin{align}
f (x) : = \frac{1}{(2\pi)^{\frac{d}{2}}} \int_{\R^d} e^{ix \cdot \xi } \widehat{f}(\xi) \, d\xi .
\end{align}

For a time interval $I$, we have the following spacetime norms $L_t^p L_x^q(I \times \R^d)$ 
\begin{align*}
\norm{u}_{L_t^p L_x^q (I \times \R^d)} : = \parenthese{\int_I \parenthese{\int_{\R^d} \abs{u(t,x)}^q \, dx }^{\frac{p}{q}}\, dt}^{\frac{1}{p}} .
\end{align*}

\subsection{Cutoff functions and chain rule}
We will frequently apply cutoff functions to the solution $u$ in later sections, hence we provide a general formula for the chain rule and product rule calculation here.

If $u$ solves \eqref{eq 4SE}, we then can find the equation for the modified $u$. That is, for a smooth function $\sigma (t,x)$, then the modified solution $\sigma (t,x) u (t,x)$ satisfies
\begin{align}\label{eq ChainRule1}
\begin{aligned}
& \quad (i\partial_t+ \D^2) [\sigma (t,x) u (t,x) ]  = i \partial_t (\sigma u ) + \D^2 ( \sigma u ) \\
& =  i (\partial_t \sigma)  u + i\sigma ( \partial_t u ) + \sum_{j=1}^d  \sigma \partial_{x_j}^{\, 4} u  +4 (\partial_{x_j} \sigma )\partial_{x_j}^{\, 3} u   + 6 (\partial_{x_j}^{\, 2} \sigma) \partial_{x_j}^{\, 2} u  + 4(\partial_{x_j}^{\, 3} \sigma ) \partial_{x_j} u +  (\partial_{x_j}^{\, 4} \sigma ) u\\
& = i (\partial_t \sigma)  u + \sigma (i\partial_t+ \D^2)u +  \sum_{j=1}^d 4 (\partial_{x_j} \sigma )\partial_{x_j}^{\, 3} u + 6 (\partial_{x_j}^{\, 2} \sigma) \partial_{x_j}^{\, 2} u + 4(\partial_{x_j}^{\, 3} \sigma )\partial_{x_j} u+ (\partial_{x_j}^{\, 4} \sigma) u  .
\end{aligned}
\end{align}

When $\sigma= \sigma (x)$, we write
\begin{align}\label{eq ChainRule2}
\begin{aligned}
& \quad (i\partial_t+\D^2) [\sigma (x) u (t,x) ] \\
& = \sigma (i\partial_t+ \D^2)u +  \sum_{j=1}^d 4 (\partial_{x_j} \sigma )\partial_{x_j}^{\, 3} u + 6 (\partial_{x_j}^{\, 2} \sigma) \partial_{x_j}^{\, 2} u + 4(\partial_{x_j}^{\, 3} \sigma )\partial_{x_j} u+ (\partial_{x_j}^{\, 4} \sigma) u  .
\end{aligned}
\end{align}

\section{Linear Exponetial Decay Estimate}\label{sec Logcon}

In this section, we prove an $L^2$-based decay estimate for solutions to an inhomogeenous version of \eqref{eq 4SE} with an exponentially weighted measure in one spatial direction. 
\begin{lem}\label{lem con}
There exists $\varepsilon_0 > 0$ such that if $V: [0,1] \times \R^d \to \C$ satisfies
\begin{align}\label{eq V}
\norm{V}_{L_t^1 L_x^{\infty}} \leq \varepsilon_0 ,
\end{align}
and $u \in C([0,1] : L_x^2 (\R^d))$ is a solution of the following perturbed equation
\begin{align}\label{inhomogeneous 4SE}
\begin{cases}
(i \partial_t + \D^2 ) u = Vu + H , & (t,x)  \in [0,1] \times \R^d \\
u(0,x) = u_0 (x)
\end{cases}
\end{align}
with $H \in L_t^1 ([0,1] : L^2 (\R^d))$ and for some $\beta \in \R$, 
\begin{align}
u_0, \, u_1 = u(1,\cdot) \in L^2 (e^{2\beta x_1} \, dx) , \quad H \in L_t^1 ([0,1] : L^2 (e^{2\beta x_1} \, dx)) .
\end{align}
Then
\begin{align}\label{eq 11}
\sup_{t \in [0,1]} \norm{u(t)}_{L^2 (e^{2\beta x_1} \, dx)}^2 \leq C ( \norm{u_0}_{L^2 (e^{2\beta x_1} \, dx)}^2 + \norm{u_1}_{L^2 (e^{2\beta x_1} \, dx)}^2 + \norm{H}_{L_t^1 L_x^2  (e^{2\beta x_1} \, dx)}^2 ) .
\end{align}
Note the constant $C$ is independent on $\beta$.
\end{lem}

\begin{rmk}[A formal proof]\label{rmk Idea}
The proof of this lemma is very computational and involves introducing cutoff functions and handling  error term produced by such truncation. Before starting the proof, let us present the main idea of the calculation. 

Let us forget the perturbation $H$ and potential $V$ for a moment. 
By considering a change of variables: $v = e^{\beta x_1}u$, we reduce \eqref{eq 11} into the following  inequality 
\begin{align}
\sup_{t \in [0,1]} \norm{v(t)}_{L_x^2 }^2 \leq C ( \norm{v(0)}_{L_x^2 }^2 + \norm{v(1)}_{L_x^2 }^2  ) .
\end{align}

Under such change of variables, we have
\begin{align}
\partial_t u & = e^{- \beta x_1} \partial_t v ,\\
\partial_{x_1}^{\, 4} v & = e^{- \beta x_1} \partial_{x_1}^{\, 4} v - 4 \beta e^{- \beta x_1} \partial_{x_1}^{\, 3} v + 6 \beta^2 e^{- \beta x_1} \partial_{x_1}^{\, 2} v - 4 \beta^3 e^{- \beta x_1} \partial_{x_1} v + \beta^4 e^{- \beta x_1} v ,\\
\partial_{x_j}^{\, 4} v & = e^{- \beta x_1} \partial_{x_j}^{\, 4} v, \quad j =2, \cdots , d .
\end{align}
which gives the differential equation that $v$ solves
\begin{align}
( i \partial_t + \D^2 ) v = - 4 \beta \partial_{x_1}^{\, 3} v + 6 \beta^2 \partial_{x_1}^{\, 2} v - 4 \beta^4 \partial_{x_1}v + \beta^4 v .
\end{align}
Taking Fourier transforms on both sides, we obtain a separable differential equation for $\widehat{v}$
\begin{align}
i \partial_t \widehat{v} + \sum_{j=1}^d (i \xi_j)^4 \widehat{v} = -4\beta (i \xi_1)^3 \widehat{v}  + 6 \beta^2 (i \xi_1)^2 \widehat{v}  - 4 \beta^3 (i \xi_1) \widehat{v}  + \beta^4 \widehat{v} 
\end{align}
which implies
\begin{align}
\partial_t \widehat{v} = i\widehat{v} ( \sum_{j=1}^d\xi_j^4 +  6 \beta^2 \xi_1^2 - \beta^4) + \widehat{v} (4 \beta \xi^3 - 4 \beta^3 \xi_1) .
\end{align}
Then we obtain $\widehat{v}$ of the following form
\begin{align}
\widehat{v} (t) = Ce^{i(\sum_{j=1}^d\xi_j^4 + 6 \beta^2 \xi_1^2 - \beta^4)t} e^{4 (\beta \xi_1^3 - \beta^3 \xi_1) t}
\end{align}
where $C$ is some initial data. 
Then  compute the $L_x^2$ norm of $v$ by Plancherel theorem
\begin{align}\label{eq Dom}
\norm{v(t)}_{L_x^2} = \norm{\widehat{v}(t)}_{L_{\xi}^2} = \norm{C e^{4 (\beta \xi_1^3 - \beta^3 \xi_1) t}}_{L_{\xi}^2}.
\end{align}
Now we see that when $\beta \xi_1^3 - \beta^3 \xi_1 >0$, $\norm{e^{4 (\beta \xi_1^3 - \beta^3 \xi_1) t}}_{L_{\xi}^2}$ increases, hence 
\begin{align}
\norm{P_{\beta \xi_1^3 - \beta^3 \xi_1 >0} v (t)}_{L_x^2} \leq  \norm{v(1)}_{L_x^2} ;
\end{align}
and when $\beta \xi_1^3 - \beta^3 \xi_1 <0$, $\norm{e^{4 (\beta \xi_1^3 - \beta^3 \xi_1) t}}_{L_x^2}$ decreases, hence 
\begin{align}
\norm{P_{\beta \xi_1^3 - \beta^3 \xi_1 <0} v (t)}_{L_x^2} \leq  \norm{v(0)}_{L_x^2} .
\end{align}
Combining these two inequalities, we arrive at our conclusion  that for $t \in [0,1]$
\begin{align}
\norm{v(t)}_{L_x^2}^2 = \norm{P_{\beta \xi_1^3 - \beta^3 \xi_1 >0} v (t)}_{L_x^2}^2 + \norm{P_{\beta \xi_1^3 - \beta^3 \xi_1 <0} v (t)}_{L_x^2}^2 \leq  \norm{v(0)}_{L_x^2}^2 + \norm{v(1)}_{L_x^2}^2.
\end{align}
This computation is considered formal since initially we did not know the $L^2$-finiteness of the new variable $v =e^{\beta x_1}u$. However, the real proof utilizes the same idea in this remark. To make sense of such a change of variables and ensure its finiteness, we need to introduce several cutoff functions and carefully handle the resulting error terms through Calder\'on's first commutator estimates.

In the rest of this section, we prove Lemma \ref{lem con} by employing the strategy outlined in this formal proof which includes the careful treatment of error terms.
\end{rmk}

Now we are ready to start the proof. 
\begin{proof}[Proof of Lemma \ref{lem con}]
Without loss of generality, we assume $\beta >0$.

\noindent {\bf Step 1: Cutoff functions and first change of variables.}
As mentioned earlier, the intuition presented in Remark \ref{rmk Idea} is based on a formal computation, and the finiteness of several quantities involved in the calculation is unknown. To address this concern, we introduce cutoff functions as a means of handling this issue. This  proof is based on an energy estimate.
\begin{itemize}
\item 
Define $\varphi_n \in C^{\infty} (\R)$, $0 \leq \varphi_n \leq 1$ such that
\begin{align}
\varphi_n (s) = 
\begin{cases}
1 , & s \leq n ,\\
0 , & s > 10n ,
\end{cases}
\end{align}
and
\begin{align} 
\abs{\varphi_n^{(k)} (s)} \leq \frac{c_k}{n^k} .
\end{align}

\item
Based on $\varphi_n$, we define $\theta_n \in C^{\infty} (\R)$, 
\begin{align}
\theta_n (s) : = \int_0^s \varphi_n^2 (\ell) \, d \ell ,
\end{align}
which satisfies
\begin{align}
\theta_n (s) = 
\begin{cases}
\beta s , & s \leq n ,\\
c_n \beta , & s > 10n .
\end{cases}
\end{align}
and
\begin{align}
\theta_n ' (s ) = \beta \varphi_n^2 (s) \leq \beta , \quad \abs{\theta_n^{(k)} (s)} \leq \frac{c_k \beta}{n^{k-1}} .
\end{align}

\item
Finally, we obtain the important modification of the weight $e^{\beta x_1}$, which is given by
\begin{align}
\Phi_n (s) = e^{\theta_n (s)}
\end{align}
and satisfies
\begin{align}
\Phi_n (s) \leq e^{\beta s} \quad \text{and} \quad \lim_{n \to \infty} \Phi_n (s) = e^{\beta s} .
\end{align}
\end{itemize}

Recalling the change of variables that  we did in Remark \ref{rmk Idea}, we write
\begin{align}
v_n (t,x) = \Phi_n (x_1) u(t,x) = e^{\theta_n (x_1) } u .
\end{align}
Note here $v_n$ is almost the $v = e^{\beta x_1} u$ in the change of variables that we did in Remark \ref{rmk Idea}. 

Then we want to find a differential equation that $v_n$ satisfies. First, we compute
\begin{align}
i \Phi_n \partial_t u  & = i \partial_t v_n ,\\
\Phi_n \partial_{x_1} u & = - \theta_n' v_n + \partial_{x_1} v_n ,\\
\Phi_n \partial_{x_1}^{\, 2} u & = (-\theta_n')^2 v_n + (-\theta_n'')v_n + 2(-\theta_n') \partial_{x_1}v_n + \partial_{x_1}^{\, 2} v_n ;\\
\Phi_n \partial_{x_1}^{\, 3} u & = (-\theta_n')^3 v_n + 3 (-\theta_n')(-\theta_n'') v_n + (-\theta_n''')v_n + 3 (-\theta_n')^2 \partial_{x_1}v_n + 3 (-\theta_n'')\partial_{x_1} v_n + 3 (-\theta_n') \partial_{x_1}^{\, 2} v_n + \partial_{x_1}^{\, 3} v_n\\
\Phi_n \partial_{x_1}^{\, 4} u & = (-\theta_n')^4 v_n + 6 (-\theta_n')^2 (-\theta_n'') v_n  + 4 (-\theta_n')(-\theta_n''')v_n + 3 (-\theta_n'')^2 v_n + (-\theta_n'''')v_n \\
& \quad + 4 (-\theta_n')^3 \partial_{x_1}v_n + 12 (-\theta_n')(-\theta_n'')\partial_{x_1}v_n + 4 (-\theta_n''') \partial_{x_1} v_n\\
& \quad + 6 (-\theta_n')^2 \partial_{x_1}^{\, 2} v_n + 6 (-\theta_n'') \partial_{x_1}^{\, 2} v_n + 4 (-\theta_n')\partial_{x_1}^{\, 3} v_n    + \partial_{x_1}^{\, 4} v_n ,\\
\Phi_n \partial_{x_j}^{\, 4} u & = e^{\theta_n} \partial_{x_j}^{\, 4} (e^{-\theta_n} v_n) = \partial_{x_j}^{\, 4} v_n , \qquad j = 2, \cdots , d .
\end{align} 
Then putting the derivatives above, we get the following equation
\begin{align}
\Phi_n H + \Phi_n V u & = \Phi_n (i \partial_t u + \D^2 u  ) \\
& = i\partial_t v_n + \D^2  v_n  \\
& \quad + [(-\theta_n')^4 v_n + 6 (-\theta_n')^2 (-\theta_n'') v_n  + 4 (-\theta_n')(-\theta_n''')v_n + 3 (-\theta_n'')^2 v_n + (-\theta_n'''')v_n\\
& \quad + 4 (-\theta_n')^3 \partial_{x_1}v_n + 12 (-\theta_n')(-\theta_n'')\partial_{x_1}v_n + 4 (-\theta_n''') \partial_{x_1} v_n\\
& \quad + 6 (-\theta_n')^2 \partial_{x_1}^{\, 2} v_n + 6 (-\theta_n'') \partial_{x_1}^{\, 2} v_n + 4 (-\theta_n')\partial_{x_1}^{\, 3} v_n ] .
\end{align}
Using \eqref{eq 4SE}, we write
\begin{align}\label{eq v_n}
\begin{aligned}
i\partial_t v_n + \D^2  v_n & =  - v_n [(-\theta_n')^4  + 6 (-\theta_n')^2 (-\theta_n'')   + 4 (-\theta_n')(-\theta_n''') + 3 (-\theta_n'')^2  + (-\theta_n'''')]\\
& \quad - \partial_{x_1}v_n [ 4 (-\theta_n')^3  + 12 (-\theta_n')(-\theta_n'') + 4 (-\theta_n''') ]\\
& \quad - \partial_{x_1}^{\, 2} v_n[ 6 (-\theta_n')^2  + 6 (-\theta_n'')] \\
& \quad -  \partial_{x_1}^{\, 3} v_n [ 4(-\theta_n')] + \Phi_n H +  V v_n ,
\end{aligned}
\end{align}
where
\begin{align}
\theta_n'& = \beta \varphi_n^2 ,\\
\theta_n'' & = 2 \beta \varphi_n \varphi_n' ,\\
\theta_n'''& = 2\beta [ (\varphi_n')^2 + \varphi_n \varphi_n''] ,\\
\theta_n''''& = 2\beta [3 \varphi_n' \varphi_n'' + \varphi_n \varphi_n'''] .
\end{align}

\noindent {\bf Step 2: Second change of variables.}
We wish to compute $\partial_t \norm{v_n}_{L_x^2}^2$, however, there is a   constant multiply of $\norm{v_n}_{L_x^2}^2$ on the right-hand side of \eqref{eq v_n} (to be more precise, it is the  first term $-v_n (- \theta_n')^4$), which will not be made small when doing  estimates at the very end (recall $\varphi_n \sim 1$ when $s \leq n$). Hence we remove this term by another change of variables (this only removes the non-vanishing term and will not change the $L^2$ norm at all) before computing $\partial_t \norm{v_n}_{L^2}$ via
\begin{align}
w_n = e^{-i (-\theta_n')^4 t} v_n = : e^{\mu} v_n .
\end{align}
Similarly, we need to find a differential equation that $w_n$ satisfies. 
\begin{align}
e^{\mu}\partial_t v_n & =  i (-\theta_n')^4 w_n +  \partial_t w_n , \\
e^{\mu} \partial_{x_1}^{\, 4} v_n & = e^{\mu} \partial_{x_1}^{\, 4} (e^{-\mu} w_n) ,\\
& = \partial_{x_1}^{\, 4} w_n - 4 \partial_{x_1}^{\, 3} w_n \mu' + \partial_{x_1}^{\, 2} w_n [6 (\mu')^2 - 6 \mu''] + \partial_{x_1} w_n [12 \mu' \mu'' - 4(\mu')^3 - 4\mu'''] \\
& \quad + w_n [ -6(\mu')^2 \mu'' + (\mu')^4 + 4 \mu''' \mu' + 3 (\mu'')^2 - \mu''''] ,\\
e^{\mu} \partial_{x_j}^{\, 4} v_n & = e^{\mu} \partial_{x_j}^{\, 4} e^{-\mu} w_n = \partial_{x_j}^{\, 4} w_n , \qquad j = 2, \cdots , d .
\end{align}
where
\begin{align}
\mu & = - i (-\theta_n')^4 t ,\\
\mu' & = -it [4 (-\theta_n')^3 (-\theta_n'')] ,\\
\mu''& = -it [12 (-\theta_n')^2 (-\theta_n'')^2 + 4 (-\theta_n')^3 (-\theta_n''')] ,\\
\mu''' & = -it [24 (-\theta_n')(-\theta_n'')^3 + 36 (-\theta_n')^2(-\theta_n'')(-\theta_n''') + 4 (-\theta_n')^3 (-\theta_n'''')] ,\\
\mu''''& = -it [24(-\theta_n'')^4 + 144(-\theta_n')(-\theta_n'')^2 (-\theta_n''') + 36(-\theta_n')^2 (-\theta_n''')^2 \\
& \quad + 48(-\theta_n')^2(-\theta_n'')(-\theta_n'''') + 4(-\theta_n')^3 (-\theta_n''''')] .
\end{align}

Putting the derivatives together, we obtain
\begin{align}\label{eq v_n1}
\begin{aligned}
e^{\mu} (i \partial_t + \D^2 ) v_n & = - (-\theta_n')^4 w_n + i \partial_t w_n + \D^2 w_n \\
& \quad - 4 \partial_{x_1}^{\, 3} w_n \mu' + \partial_{x_1}^{\, 2} w_n [6 (\mu')^2 - 6 \mu''] + \partial_{x_1} w_n [12 \mu' \mu'' - 4(\mu')^3 - 4\mu'''] \\
& \quad + w_n [ -6(\mu')^2 \mu'' + (\mu')^4 + 4 \mu''' \mu' + 3 (\mu'')^2 - \mu''''] .
\end{aligned}
\end{align}
Substituting  the RHS of  \eqref{eq v_n} into the LHS of \eqref{eq v_n1}, we then write
\begin{align}
& e^{\mu}  \{- v_n [(-\theta_n')^4  + 6 (-\theta_n')^2 (-\theta_n'')   + 4 (-\theta_n')(-\theta_n''') + 3 (-\theta_n'')^2  + (-\theta_n'''')]\\
& \quad - \partial_{x_1}v_n [ 4 (-\theta_n')^3  + 12 (-\theta_n')(-\theta_n'') + 4 (-\theta_n''') ]\\
& \quad - \partial_{x_1}^{\, 2} v_n[ 6 (-\theta_n')^2  + 6 (-\theta_n'')] \\
& \quad -  \partial_{x_1}^{\, 3} v_n [ 4(-\theta_n')] + \Phi_n H  + V v_n\} \\
& = - (-\theta_n')^4 w_n + i \partial_t w_n + \D^2 w_n \\
& \quad - 4 \partial_{x_1}^{\, 3} w_n \mu' + \partial_{x_1}^{\, 2} w_n [6 (\mu')^2 - 6 \mu''] + \partial_{x_1} w_n [12 \mu' \mu'' - 4(\mu')^3 - 4\mu'''] \\
& \quad + w_n [ -6(\mu')^2 \mu'' + (\mu')^4 + 4 \mu''' \mu' + 3 (\mu'')^2 - \mu''''] .
\end{align}

Hence shuffling the terms in the equation above, we get the following 
\begin{align}\label{eq w_n1}
\begin{aligned}
(i \partial_t + \D^2) w_n & = - \{- 4 \partial_{x_1}^{\, 3} w_n \mu' + \partial_{x_1}^{\, 2} w_n [6 (\mu')^2 - 6 \mu''] + \partial_{x_1} w_n [12 \mu' \mu'' - 4(\mu')^3 - 4\mu'''] \\
& \quad + w_n [ -6(\mu')^2 \mu'' + (\mu')^4 + 4 \mu''' \mu' + 3 (\mu'')^2 - \mu'''']\} \\
& \quad - e^{\mu} v_n [6 (-\theta_n')^2 (-\theta_n'')   + 4 (-\theta_n')(-\theta_n''') + 3 (-\theta_n'')^2  + (-\theta_n'''')]\\
& \quad - e^{\mu} \partial_{x_1}v_n [ 4 (-\theta_n')^3  + 12 (-\theta_n')(-\theta_n'') + 4 (-\theta_n''') ]\\
& \quad - e^{\mu} \partial_{x_1}^{\, 2} v_n[ 6 (-\theta_n')^2  + 6 (-\theta_n'')] \\
& \quad - e^{\mu} \partial_{x_1}^{\, 3} v_n [ 4(-\theta_n')] + e^{\mu} [\Phi_n H  + V v_n ] .
\end{aligned}
\end{align}

Noticing that  the $v_n$ terms in \eqref{eq w_n1} can be rewritten as
\begin{align}
e^{\mu} \partial_{x_1} v_n & = - \mu' w_n + \partial_{x_1} w_n , \\
e^{\mu} \partial_{x_1}^{\, 2} v_n & = (-\mu')^2 w_n + (-\mu'') w_n + 2(-\mu') \partial_{x_1}w_n + \partial_{x_1}^{\, 2} w_n ,\\
e^{\mu} \partial_{x_1}^{\, 3} v_n & = (-\mu')^3 w_n + 3 (-\mu')(-\mu'') w_n + (-\mu''')w_n + 3 (-\mu')^2 \partial_{x_1} w_n \\
& \quad + 3 (-\mu'')\partial_{x_1} w_n + 3(-\mu') \partial_{x_1}^{\, 2} w_n + \partial_{x_1}^{\, 3} w_n ,
\end{align}
then we have the following equivalent form for the last four lines involving $v_n$ in \eqref{eq w_n1}
\begin{align}
\text{Last four lines in \eqref{eq w_n1}}
& = w_n \{-6 (-\theta_n')^2 (-\theta_n'') -4 (-\theta_n') (-\theta_n''') - 3 (-\theta_n'')^2 - (-\theta_n'''') + (-\mu') [-4(-\theta_n')^3 \\
& \quad - 12 (-\theta_n')(-\theta_n'') -4(-\theta_n''')]  + [(-\mu')^2  + (-\mu'')] \cdot [-6(-\theta_n')^2 - 6(-\theta_n'')] \\
& \quad + [(-\mu')^3 + 3(-\mu')(-\mu'') + (-\mu''')] \cdot [-4(-\theta_n')]\} \\
& \quad + \partial_{x_1} w_n \{-4 (-\theta_n')^3 -12(-\theta_n')(-\theta_n'') - 4 (-\theta_n''') + 2(-\mu') [-6(-\theta_n')^2 - 6(-\theta_n'')] \\
& \quad + [3 (-\mu')^2  + 3 (-\mu'')]\cdot [-4(-\theta_n')]  \} \\
& \quad + \partial_{x_1}^{\, 2} w_n [-6 (-\theta_n')^2 - 6(-\theta_n'') + 3 (-\mu')] \cdot [-4 (-\theta_n')] \\
& \quad + \partial_{x_1}^{\, 3} w_n [-4(-\theta_n')]  + e^{\mu} \Phi_n H  + V w_n  .
\end{align}

Now we finally find a differential equation that $w_n$ satisfies, and \eqref{eq w_n1} becomes
\begin{align}\label{eq w_n}
\begin{aligned}
(i\partial_t + \D^2 ) w_n & = w_n [q_0 (x_1) ] + \partial_{x_1} w_n [a_1^2 (x_1) + q_1 (x_1)] + \partial_{x_1}^{\, 2} w_n [-a_2^2(x_1) + q_2 (x_1)] \\
& \quad + \partial_{x_1}^{\, 3} w_n [a_3^2(x_1) + i t b (x_1)] + e^{\mu} \Phi_n (x_1) H  + V w_n  , 
\end{aligned}
\end{align}
where
\begin{align}
q_0 (x_1) & = -[ -6(\mu')^2 \mu'' + (\mu')^4 + 4 \mu''' \mu' + 3 (\mu'')^2 - \mu'''']  -6 (-\theta_n')^2 (-\theta_n'')  \\
& \quad -4 (-\theta_n') (-\theta_n''') - 3 (-\theta_n'')^2 - (-\theta_n'''') + (-\mu')\cdot [-4(-\theta_n')^3 - 12 (-\theta_n')(-\theta_n'') -4(-\theta_n''')] \\
& \quad + [(-\mu')^2  + (-\mu'')] \cdot [-6(-\theta_n')^2 - 6(-\theta_n'')] + [(-\mu')^3 + 3(-\mu')(-\mu'') + (-\mu''')] \cdot [-4(-\theta_n')] ,\\
a_1^2(x_1) & = -4 (-\theta_n')^3 ,\\
q_1(x_1) & = -[12 \mu' \mu'' - 4(\mu')^3 - 4\mu'''] ,\\
& \quad  -12(-\theta_n')(-\theta_n'') - 4 (-\theta_n''') + 2(-\mu') \cdot [-6(-\theta_n')^2 - 6(-\theta_n'')] + [3 (-\mu')^2 + 3 (-\mu'')]\cdot [-4(-\theta_n')] \\
-a_2^2 (x_1) & = -6 (-\theta_n')^2 [-4(-\theta_n')] ,\\
q_2 (x_1) & = - [6 (\mu')^2 - 6 \mu''] + [ - 6(-\theta_n'') + 3 (-\mu')] \cdot [-4 (-\theta_n')] ,\\
a_3^2 (x_1) & = - 4 (-\theta_n') ,\\
it b(x_1) & = 4 \mu' .
\end{align}
Recall 
\begin{align}
\mu & = - i (-\theta_n')^4 t ,\\
\mu' & = -it [4 (-\theta_n')^3 (-\theta_n'')] ,\\
\mu''& = -it [12 (-\theta_n')^2 (-\theta_n'')^2 + 4 (-\theta_n')^3 (-\theta_n''')] ,\\
\mu''' & = -it [24 (-\theta_n')(-\theta_n'')^3 + 36 (-\theta_n')^2(-\theta_n'')(-\theta_n''') + 4 (-\theta_n')^3 (-\theta_n'''')] , \\
\mu''''& = -it [24(-\theta_n'')^4 + 144(-\theta_n')(-\theta_n'')^2 (-\theta_n''') + 36(-\theta_n')^2 (-\theta_n''')^2 , \\
& \quad + 48(-\theta_n')^2(-\theta_n'')(-\theta_n'''') + 4(-\theta_n')^3 (-\theta_n''''')] ,
\end{align}
and 
\begin{align}
\theta_n'& = \beta \varphi_n^2 , \\
\theta_n'' & = 2 \beta \varphi_n \varphi_n' , \\
\theta_n'''& = 2\beta [ (\varphi_n')^2 + \varphi_n \varphi_n''] ,\\
\theta_n''''& = 2\beta [3 \varphi_n' \varphi_n'' + \varphi_n \varphi_n'''] .
\end{align}

We observe the following decay properties from the coefficients in \eqref{eq w_n}, that is, for $k \in \N$
\begin{align}\label{eq Decay}
\begin{aligned}
&\norm{\partial_{x_1}^{\, k} q_j }_{L^{\infty}} \leq \frac{c}{n^{k+1}}, \quad j =0,1,2 ,\\
&\norm{\partial_{x_1}^{\, k} a_j^2}_{L^{\infty}} \leq \frac{c}{n^{k}} ,\quad j=1,2,3 , \\
&\norm{\partial_{x_1}^{\, k} b}_{L^{\infty}} \leq \frac{c}{n^{k+1}} .
\end{aligned}
\end{align}

We remark here that due to the second change of variables $v_n \to w_n$, we successfully removed a constant multiply of $v_n$ and only left with a decaying  coefficient $q_0 (x_1)$ times $w_n$.

\noindent {\bf Step 3: An energy estimate on $w_n$.}
In the rest of the proof, we work on estimating the $L^2$ norm of $w_n$. Starting by introducing a couple of Fourier multipliers. 
\begin{itemize}
\item 
Define
\begin{align}
\chi_{+} (\xi)  = 
\begin{cases}
1 & \text{if } \xi_1 \in (-\beta ,0) \cup (\beta , \infty) ,\\
0 & \text{if } \xi_1 \in  (-\infty, -\beta] \cup [0, \beta]  ,
\end{cases}
\end{align}
and 
\begin{align}
\chi_{-} (\xi)  = 
\begin{cases}
1 & \text{if } \xi_1 \in (-\infty, -\beta] \cup [0, \beta] ,\\
0 & \text{if } \xi_1 \in  (-\beta ,0) \cup (\beta , \infty) . 
\end{cases}
\end{align}

\item
We also define $\eta \in C_0^{\infty} (\R^d)$ with $0 \leq \eta (x) \leq 1$ and 
\begin{align}
\eta (x) = 
\begin{cases}
1 & \text{if }  \abs{x} \leq \frac{1}{2} , \\
0 & \text{if } \abs{x} \geq 1  .
\end{cases}
\end{align}

\item
Then we define two projections, for $\varepsilon \in (0,1]$
\begin{align}
\widehat{P_{\varepsilon}f} (\xi)  & : = \eta_\varepsilon (\xi) \widehat{f} (\xi)  =\eta (\varepsilon \xi) \widehat{f} (\xi) ,\\
\widehat{P_{\pm}f} (\xi) & : = \chi_{\pm} (\xi) \widehat{f} (\xi) .
\end{align}
\end{itemize}

We remark that (1) the projections $P_{\pm}$ are defined based on the formal calculation in Remark \ref{rmk Idea}, which allow the dominant term in \eqref{eq Dom} in the $L^2$ norm of $v_n$ (or $w_n$) to have a sign; (2) the projection $P_{\varepsilon}$ permits the freedom to do any integration by parts in the frequency space.

We now want to derive equations for $P_{\varepsilon}P_{\pm} w_n$ by applying the projection to each term in \eqref{eq w_n}:
\begin{align}\label{eq P1}
\begin{aligned}
i \partial_t P_{\varepsilon}P_{+} w_n + \D^2 P_{\varepsilon}P_{+} w_n & = P_{\varepsilon}P_{+} w_n [q_0 (x_1) ] + P_{\varepsilon}P_{+} \partial_{x_1} w_n [a_1^2 (x_1) + q_1 (x_1)] \\
& \quad + P_{\varepsilon}P_{+} \partial_{x_1}^{\, 2} w_n [-a_2^2(x_1) + q_2 (x_1)] + P_{\varepsilon}P_{+} \partial_{x_1}^{\, 3} w_n [a_2^3(x_1) + i t b (x_1)] \\
& \quad + P_{\varepsilon}P_{+} e^{\mu} \Phi_n (x_1) H + P_{\varepsilon}P_{+} V w_n  ,
\end{aligned}
\end{align}
and
\begin{align}\label{eq P2}
\begin{aligned}
-i \partial_t \overline{P_{\varepsilon}P_{+} w_n} + \D^2 \overline{ P_{\varepsilon}P_{+} w_n} & = \overline{P_{\varepsilon}P_{+} w_n [q_0 (x_1) ]} + \overline{P_{\varepsilon}P_{+} \partial_{x_1} w_n [a_1^2 (x_1) + q_1 (x_1)]} \\
& \quad+ \overline{P_{\varepsilon}P_{+} \partial_{x_1}^{\, 2} w_n [-a_2^2(x_1) + q_2 (x_1)]}  + \overline{P_{\varepsilon}P_{+} \partial_{x_1}^{\, 3} w_n [a_2^3(x_1) + i t b (x_1)]} \\
& \quad + \overline{P_{\varepsilon}P_{+} e^{\mu} \Phi_n (x_1) H} + \overline{P_{\varepsilon}P_{+} V w_n } .
\end{aligned}
\end{align}

Multiplying \eqref{eq P1} and \eqref{eq P2} by $\overline{P_{\varepsilon}P_{+} w_n}$ and $- P_{\varepsilon}P_{+} w_n$, respectively, and adding the result, we obtain
\begin{align}
& i \partial_t \abs{P_{\varepsilon}P_{+} w_n}^2 + \D^2  P_{\varepsilon}P_{+} w_n \cdot \overline{P_{\varepsilon}P_{+} w_n} - \overline{\D^2 P_{\varepsilon}P_{+} w_n} \cdot P_{\varepsilon}P_{+} w_n \\
& = P_{\varepsilon}P_{+} w_n [q_0 (x_1) ] \cdot  \overline{P_{\varepsilon}P_{+} w_n} - \overline{P_{\varepsilon}P_{+} w_n [q_0 (x_1) ]} \cdot P_{\varepsilon}P_{+} w_n \\
& \quad + P_{\varepsilon}P_{+} \partial_{x_1} w_n [a_1^2 (x_1) + q_1 (x_1)] \cdot \overline{P_{\varepsilon}P_{+} w_n} - \overline{P_{\varepsilon}P_{+} \partial_{x_1} w_n [a_1^2 (x_1) + q_1 (x_1)]} \cdot P_{\varepsilon}P_{+} w_n \\
& \quad + P_{\varepsilon}P_{+} \partial_{x_1}^{\, 2} w_n [-a_2^2(x_1) + q_2 (x_1)] \cdot \overline{P_{\varepsilon}P_{+} w_n} - \overline{P_{\varepsilon}P_{+} \partial_{x_1}^{\, 2} w_n [-a_2^2(x_1) + q_2 (x_1)]} \cdot P_{\varepsilon}P_{+} w_n \\
& \quad + P_{\varepsilon}P_{+} \partial_{x_1}^{\, 3} w_n [a_2^3(x_1) + i t b (x_1)] \cdot \overline{P_{\varepsilon}P_{+} w_n} - \overline{P_{\varepsilon}P_{+} \partial_{x_1}^{\, 3} w_n [a_2^3(x_1) + i t b (x_1)]} \cdot P_{\varepsilon}P_{+} w_n \\
& \quad + P_{\varepsilon}P_{+} e^{\mu} \Phi_n (x_1) H   \cdot \overline{P_{\varepsilon}P_{+} w_n} - \overline{P_{\varepsilon}P_{+} e^{\mu} \Phi_n (x_1) H } \cdot P_{\varepsilon}P_{+} w_n \\
& \quad + P_{\varepsilon}P_{+} V w_n   \cdot \overline{P_{\varepsilon}P_{+} w_n} - \overline{P_{\varepsilon}P_{+} V w_n } \cdot P_{\varepsilon}P_{+} w_n  ,
\end{align}
and taking the imaginary part in the equation above yields
\begin{align}
& \partial_t \abs{P_{\varepsilon}P_{+} w_n}^2 + 2 \im (\D^2  P_{\varepsilon}P_{+} w_n \cdot \overline{P_{\varepsilon}P_{+} w_n})  \label{eq wn}\\
& = 2 \im (P_{\varepsilon}P_{+} w_n [q_0 (x_1) ] \cdot  \overline{P_{\varepsilon}P_{+} w_n} ) \label{eq w0}\\
& \quad + 2 \im ( P_{\varepsilon}P_{+} \partial_{x_1} w_n [a_1^2 (x_1) + q_1 (x_1)] \cdot \overline{P_{\varepsilon}P_{+} w_n} ) \label{eq w1}\\
& \quad + 2 \im (P_{\varepsilon}P_{+} \partial_{x_1}^{\, 2} w_n [-a_2^2(x_1) + q_2 (x_1)] \cdot \overline{P_{\varepsilon}P_{+} w_n} )  \label{eq w2}\\
& \quad + 2 \im (P_{\varepsilon}P_{+} \partial_{x_1}^{\, 3} w_n [a_3^2(x_1) ] \cdot \overline{P_{\varepsilon}P_{+} w_n} )  \label{eq w3}\\
& \quad + 2 \re (P_{\varepsilon}P_{+} \partial_{x_1}^{\, 3} w_n [t b (x_1)] \cdot \overline{P_{\varepsilon}P_{+} w_n} )  \label{eq w4}\\
& \quad + 2 \im (P_{\varepsilon}P_{+} e^{\mu} \Phi_n (x_1) H   \cdot \overline{P_{\varepsilon}P_{+} w_n} )  \label{eq w5}\\
& \quad + 2 \im (P_{\varepsilon}P_{+} V w_n   \cdot \overline{P_{\varepsilon}P_{+} w_n}) \label{eq w6} .
\end{align}

Now we will integrate \eqref{eq wn} and estimate each term in this integration.

\noindent {\it \underline{Easy terms.}}
Since for all $n \in \N$, $w_n \in L_x^2 (\R^d)$, $e^{\mu} \Phi_n (x_1) H \in L_x^2 (\R^d)$,  we have
\begin{align}
\im \int_{\R^d} \D^2  P_{\varepsilon}P_{+} w_n \cdot \overline{P_{\varepsilon}P_{+} w_n} \, dx =0  . 
\end{align}
Also we have for terms \eqref{eq w5} and \eqref{eq w6}
\begin{align}
\abs{\im \int_{\R^d} P_{\varepsilon}P_{+} e^{\mu} \Phi_n (x_1) H   \cdot \overline{P_{\varepsilon}P_{+} w_n} \, dx} & \leq c \norm{e^{\mu} \Phi_n (x_1) H }_{L_x^2} \norm{P_{\varepsilon} w_n}_{L_x^2} , \\
\abs{\im \int_{\R^d} P_{\varepsilon}P_{+} V w_n   \cdot \overline{P_{\varepsilon}P_{+} w_n} \, dx} & \leq c \norm{V }_{L_x^{\infty}} \norm{P_{\varepsilon} w_n}_{L_x^2}^2  ,
\end{align}
and term \eqref{eq w0} by \eqref{eq Decay}
\begin{align}
\abs{\im \int_{\R^d} P_{\varepsilon}P_{+} w_n [q_0 (x_1) ] \cdot  \overline{P_{\varepsilon}P_{+} w_n} \, dx } \leq c \norm{q_0}_{L_x^{\infty}} \norm{P_{\varepsilon} w_n}_{L_x^2}^2 \leq \frac{c}{n} \norm{P_{\varepsilon} w_n}_{L_x^2}^2 .
\end{align}

\noindent {\it \underline{Preparation.}}
To deal with other terms \eqref{eq w1} - \eqref{eq w4}, we recall Calder\'on first commutator estimates in \cite{Cal, Stein} which were also used in \cite{KPV_CPAM}
\begin{align}\label{eq Calderon}
\begin{aligned}
\norm{[P_{\pm} ; a] \partial_{x_1}f}_{L^2} \leq c \norm{\partial_{x_1}a}_{L^{\infty}} \norm{f}_{L^2} ,\\
\norm{\partial_{x_1} [P_{\pm} ; a]f}_{L^2} \leq c \norm{\partial_{x_1}a}_{L^{\infty}} \norm{f}_{L^2} , \\
\norm{[P_{\varepsilon} ; a] \partial_{x_1}f}_{L^2} \leq c \norm{\partial_{x_1}a}_{L^{\infty}} \norm{f}_{L^2} ,\\
\norm{\partial_{x_1} [P_{\varepsilon} ; a]f}_{L^2} \leq c \norm{\partial_{x_1}a}_{L^{\infty}} \norm{f}_{L^2} .
\end{aligned}
\end{align}
We also recall Claim 1 and Claim 2 in \cite{KPV_CPAM} here.

\begin{claim}[Claim 1 and Claim 2 in \cite{KPV_CPAM}]\label{claim}
Using Calder\'on first commutator estimates, we have
\begin{enumerate}
\item
For $a^2 (x_1) \geq 0$
\begin{align}
\im \int_{\R^d} P_{\varepsilon} P_{+} (a^2 (x_1) \partial_{x_1} w_n) \cdot \overline{P_{\varepsilon} P_{+} w_n} \, dx  = \im \int_{\R^d} \partial_{x_1}  P_{\varepsilon} P_{+} (a (x_1)w_n) \cdot  \overline{P_{\varepsilon} P_{+} (a (x_1) w_n)}  \, dx + \mathcal{O} (\frac{\norm{P_{\varepsilon} w_n}_{L_x^2}^2}{n}) .
\end{align}

\item
For $b(x_1)$ pure imagery
\begin{align}
\im \int_{\R^d} P_{\varepsilon} P_{+} (b (x_1) \partial_{x_1} w_n) \cdot \overline{P_{\varepsilon} P_{+} w_n} \, dx = \mathcal{O} (\frac{\norm{P_{\varepsilon} w_n}_{L_x^2}^2}{n}) .
\end{align}
\end{enumerate}
\end{claim}

\noindent {\it \underline{Term \eqref{eq w1}.}}
For the contribution from $a_1^2 (x_1)$ in term \eqref{eq w1}, Item (1) in Claim \ref{claim} gives
\begin{align}
\im \int_{\R^d} P_{\varepsilon} P_{+} (a_1^2 (x_1) \partial_{x_1} w_n) \cdot \overline{P_{\varepsilon} P_{+} w_n} \, dx  = \im \int_{\R^d} \partial_{x_1}  P_{\varepsilon} P_{+} (a_1 (x_1)w_n) \cdot  \overline{P_{\varepsilon} P_{+} (a_1 (x_1) w_n)}  \, dx + \mathcal{O} (\frac{\norm{P_{\varepsilon} w_n}_{L_x^2}^2}{n}) .
\end{align}

Then using Parseval's identity, we write 
\begin{align}
\im \int_{\R^d} \partial_{x_1}  P_{\varepsilon} P_{+} (a_1 (x_1)w_n)  \cdot  \overline{P_{\varepsilon} P_{+} (a_1 (x_1) w_n)}  \, dx & = \im \int_{\R^d} (i \xi_1) \widehat{P_{\varepsilon} P_{+} (a_1 (x_1)w_n)}  \cdot  \overline{\widehat{P_{\varepsilon} P_{+} (a_1 (x_1)w_n)}} \, d\xi\\
& = \re \int_{\R^d} \xi_1 \abs{\widehat{P_{\varepsilon} P_{+} (a_1 (x_1)w_n)}}^2 \, d\xi . \label{eq Absorb}
\end{align}

Now let us turn to the contribution from $q_1 (x_1)$ to \eqref{eq w1}. Since $\norm{q_1(x)}_{L_x^{\infty}} \leq \frac{c}{n}$, hence for $n$ large enough, 
\begin{align}
a_{1}^2 (x_1) + \re q_1 (x_1) = \widetilde{a}_{1}^2 (x_1) \geq 0.
\end{align}
Then item (2) in Claim \ref{claim} yields
\begin{align}
\im \int_{\R^d} P_{\varepsilon} P_{+} (\im q_1 (x_1) \partial_{x_1} w_n) \cdot \overline{P_{\varepsilon} P_{+} w_n} \, dx = \mathcal{O} (\frac{\norm{P_{\varepsilon} w_n}_{L_x^2}^2}{n}) .
\end{align}
Combining \eqref{eq Absorb} with 
\begin{align}
\int  \eqref{eq w1} = \re \int_{\R^d}\xi_1 \abs{\widehat{P_{\varepsilon} P_{+} (\widetilde{a}_1 (x_1)w_n)}}^2 \, d\xi + \mathcal{O} (\frac{\norm{P_{\varepsilon} w_n}_{L_x^2}^2}{n}) .
\end{align}

\noindent {\it \underline{Term \eqref{eq w2}.}}
Let us then take \eqref{eq w2} and start with the contribution of $a_2^2$ term. First using the product rule, we write
\begin{align}\label{eq 13}
a_2^2  \partial_{x_1}^{\, 2} w_n = a_2\partial_{x_1}^{\, 2} (a_2w_n) - a_2  (\partial_{x_1}^{\, 2} a_2) w_n - 2 a_2 (\partial_{x_1} a_2) (\partial_{x_1} w_n) .
\end{align}
Notice that using \eqref{eq Decay}
\begin{align}
\abs{\im \int_{\R^d}P_{\varepsilon}P_{+} a_2 (\partial_{x_1}^{\, 2} a_2) w_n \cdot \overline{P_{\varepsilon}P_{+} w_n} \, dx } = \mathcal{O} (\frac{\norm{P_{\varepsilon} w_n}_{L_x^2}^2}{n}) . 
\end{align}
Since by \eqref{eq Decay}
\begin{align}
\norm{a_2 (\partial_{x_1} a_2)}_{L_x^{\infty}} \leq \frac{c}{n} ,
\end{align}
hence when $n$ large enough it can be similarly  absorbed by \eqref{eq Absorb} without changing the sign of $a_1^2$ (just replace $a_1^2$  by a slightly different $\widetilde{a}_1^2$).

For the first term on the right hand side of \eqref{eq 13}, using \eqref{eq Calderon} and integration by parts, we write
\begin{align}
& - \im \int_{\R^d} P_{\varepsilon}P_{+} a_2^2(x_1) \partial_{x_1}^{\, 2} w_n  \cdot \overline{P_{\varepsilon}P_{+} w_n} \, dx \\
& = - \im \int_{\R^d} P_{\varepsilon}P_{+} a_2(x_1)\partial_{x_1}^{\, 2} (a_2(x_1)w_n)  \cdot \overline{P_{\varepsilon}P_{+} w_n} \, dx + \mathcal{O} (\frac{\norm{P_{\varepsilon} w_n}_{L_x^2}^2}{n})  \\
& = - \im \int_{\R^d} a_2(x_1) \partial_{x_1} P_{\varepsilon}P_{+} \partial_{x_1} (a_2(x_1)w_n)  \cdot \overline{P_{\varepsilon}P_{+} w_n} \, dx + \mathcal{O} (\frac{\norm{P_{\varepsilon} w_n}_{L_x^2}^2}{n}) \\
& = \im \int_{\R^d}  P_{\varepsilon}P_{+} \partial_{x_1} (a_2(x_1)w_n)  \cdot  \partial_{x_1} a_2(x_1) \overline{ P_{\varepsilon}P_{+} w_n} \, dx  + \mathcal{O} (\frac{\norm{P_{\varepsilon} w_n}_{L_x^2}^2}{n}) .  \label{eq 10}
\end{align}
Using \eqref{eq Calderon} again, the second factor  inside the integral in \eqref{eq 10} can be written as 
\begin{align}
\partial_{x_1}a_2(x_1) \overline{ P_{\varepsilon}P_{+} w_n} = \overline{\partial_{x_1}a_2(x_1)  P_{\varepsilon}P_{+} w_n} = \overline{\partial_{x_1}  P_{\varepsilon}a_2(x_1)  P_{+} w_n} + \mathcal{O} (\frac{\norm{P_{\varepsilon} w_n}_{L_x^2}^2}{n}) \\
= \overline{\partial_{x_1}  P_{\varepsilon} P_{+} a_2(x_1)   w_n} + \mathcal{O} (\frac{\norm{P_{\varepsilon}P_{+} w_n}_{L_x^2}^2}{n}) = \overline{ P_{\varepsilon} P_{+} \partial_{x_1} (a_2(x_1)   w_n)} + \mathcal{O} (\frac{\norm{P_{\varepsilon} w_n}_{L_x^2}^2}{n}) .
\end{align}
Here the big O notation means that $\partial_{x_1}a_2(x_1) \overline{ P_{\varepsilon}P_{+} w_n} - \overline{\partial_{x_1}  P_{\varepsilon}a_2(x_1)  P_{+} w_n}$  as an operator acting on $w_n$ is
bounded in $L^2$ with norm $O(\frac{1}{n})$.

Therefore
\begin{align}
\eqref{eq 10} & = \im \int_{\R^d}  P_{\varepsilon}P_{+} \partial_{x_1} (a_2(x_1)w_n)  \cdot  \overline{ P_{\varepsilon} P_{+} \partial_{x_1} (a_2(x_1)   w_n)} \, dx + \mathcal{O} (\frac{\norm{P_{\varepsilon} w_n}_{L_x^2}^2}{n}) \\
& = \im \int_{\R^d}  \abs{P_{\varepsilon}P_{+} \partial_{x_1} (a_2(x_1)w_n)}^2 \, dx +  \mathcal{O} (\frac{\norm{P_{\varepsilon} w_n}_{L_x^2}^2}{n}) = \mathcal{O} (\frac{\norm{P_{\varepsilon} w_n}_{L_x^2}^2}{n}) +  \mathcal{O} (\frac{\norm{P_{\varepsilon}P_{+} \partial_{x_1} w_n}_{L_x^2}^2}{n}) .
\end{align}

Using the definition $P_{\varepsilon}$ and the support of its multiplier $\abs{\eta} \leq \frac{1}{\varepsilon}$, we have
\begin{align}\label{eq err1}
\norm{P_{\varepsilon}P_{+} \partial_{x_1} w_n}_{L_x^2} = \norm{\eta_{\varepsilon} \chi_+ \xi_1 \widehat{w_n}}_{L_{\xi}^2} \leq \frac{c}{\varepsilon} \norm{P_{\varepsilon}P_{+} w_n}_{L_x^2} \leq \frac{c}{\varepsilon} \norm{P_{\varepsilon} w_n}_{L_x^2}
\end{align}
As a consequence, we also have
\begin{align}\label{eq err2}
\norm{P_{\varepsilon}P_{\pm} \partial_{x_1}^{\, 2} w_n}_{L_x^2} \leq \frac{c}{\varepsilon^2} \norm{P_{\varepsilon}P_{\pm} w_n}_{L_x^2} \leq \frac{c}{\varepsilon^2} \norm{P_{\varepsilon} w_n}_{L_x^2}.
\end{align}

For the contribution of $q_2$ to \eqref{eq w2}, using \eqref{eq Calderon} and integration by parts, we have
\begin{align}
& - \im \int_{\R^d} P_{\varepsilon}P_{+} q_2(x_1) \partial_{x_1}^{\, 2} w_n  \cdot \overline{P_{\varepsilon}P_{+} w_n} \, dx \\
& = - \im \int_{\R^d} q_2(x_1) P_{\varepsilon}P_{+} \partial_{x_1}^{\, 2} w_n  \cdot \overline{P_{\varepsilon}P_{+} w_n} \, dx + \mathcal{O} (\frac{\norm{P_{\varepsilon}P_{+} \partial_{x_1} w_n}_{L_x^2}^2}{n}) \\
& = - \im \int_{\R^d} q_2(x_1) \partial_{x_1} P_{\varepsilon}P_{+} \partial_{x_1} w_n  \cdot \overline{P_{\varepsilon}P_{+} w_n} \, dx + \mathcal{O} (\frac{\norm{P_{\varepsilon} w_n}_{L_x^2}^2}{\varepsilon^2 n}) \\
& = \im \int_{\R^d} P_{\varepsilon}P_{+} \partial_{x_1} w_n  \cdot  \partial_{x_1} (q_2(x_1)  \overline{ P_{\varepsilon}P_{+} w_n}  ) \, dx + \mathcal{O} (\frac{\norm{P_{\varepsilon} w_n}_{L_x^2}^2}{\varepsilon^2 n}) \\
& = - \im \int_{\R^d} P_{\varepsilon}P_{+}  w_n  \cdot  \partial_{x_1}^{\, 2} (q_2(x_1)  \overline{ P_{\varepsilon}P_{+} w_n}  ) \, dx + \mathcal{O} (\frac{\norm{P_{\varepsilon} w_n}_{L_x^2}^2}{\varepsilon^2 n}) \\
& = -\im \int_{\R^d} P_{\varepsilon}P_{+} w_n  \cdot  (\partial_{x_1}^{\, 2} q_2 (x_1)) \overline{ P_{\varepsilon}P_{+} w_n} \, dx - 2\im \int_{\R^d} P_{\varepsilon}P_{+} w_n  \cdot  (\partial_{x_1} q_2 (x_1)) \overline{ P_{\varepsilon}P_{+}\partial_{x_1} w_n} \, dx  \\
& \quad - \im \int_{\R^d} P_{\varepsilon}P_{+} w_n  \cdot  q_2 (x_1) \overline{ P_{\varepsilon}P_{+} \partial_{x_1}^{\, 2} w_n}  \, dx + \mathcal{O} (\frac{\norm{P_{\varepsilon} w_n}_{L_x^2}^2}{\varepsilon^2 n})
\end{align}
where the first term in above is of the size 
\begin{align}
\im \int_{\R^d} P_{\varepsilon}P_{+} w_n  \cdot  (\partial_{x_1}^{\, 2} q_2 (x_1)) \overline{ P_{\varepsilon}P_{+} w_n} \, dx = \mathcal{O} (\frac{\norm{P_{\varepsilon} w_n}_{L_x^2}^2}{n})
\end{align}
and the second term in above 
\begin{align}
\im \int_{\R^d} P_{\varepsilon}P_{+} w_n  \cdot  (\partial_{x_1} q_2 (x_1)) \overline{ P_{\varepsilon}P_{+} \partial_{x_1} w_n} \, dx
\end{align}
can be absorbed by \eqref{eq Absorb} when $n$ is large enough.

Now, for the third term in above, notice that $\norm{q_2}_{L_x^\infty} \leq \frac{c}{n}$, and we have
\begin{align}
\abs{\im \int_{\R^d} P_{\varepsilon}P_{+} \partial_{x_1} w_n  \cdot  q_2 (x_1) \overline{ P_{\varepsilon}P_{+} \partial_{x_1} w_n}  \, dx } & \leq \norm{q_2}_{L_x^\infty}  \norm{P_{\varepsilon}P_{+}\partial_{x_1} w_n}_{L_x^2}^2 \leq \frac{c}{\varepsilon^2 n} \norm{P_{\varepsilon} w_n}_{L_x^2}^2.
\end{align}
We will choose $\varepsilon$ (depending on $n$) later.

Hence
\begin{align}
\int \eqref{eq w2} = \mathcal{O} (\frac{\norm{P_{\varepsilon} w_n}_{L_x^2}^2}{n}) + \mathcal{O} (\frac{\norm{P_{\varepsilon} w_n}_{L_x^2}^2}{\varepsilon^2 n} )
\end{align}
Similarly, we have
\begin{align}
\mathcal{O} (\frac{\norm{P_{\varepsilon} \partial_{x_1}^{\, 2} w_n}_{L_x^2}^2}{n} ) = \mathcal{O} (\frac{\norm{P_{\varepsilon} w_n}_{L_x^2}^2}{\varepsilon^4 n} ) .
\end{align}

\noindent {\it \underline{Term \eqref{eq w3}.}}
Take $a_3^2(x_1) \partial_{x_1}^{\, 3} w_n$ in \eqref{eq w3}, and write
\begin{align}\label{eq a_3}
\begin{aligned}
a_3^2(x_1) \partial_{x_1}^{\, 3} w_n & = a_3 \partial_{x_1}^{\, 3} (a_3 (x_1)w_n) - a_3 (\partial_{x_1}^{\, 3} a_3 (x_1)) w_n\\
& \quad - 3 a_3(x_1) (\partial_{x_1}^{\, 2} a_3 (x_1)) (\partial_{x_1} w_n)  - 3 a_3(x_1) (\partial_{x_1} a_3 (x_1)) (\partial_{x_1}^{\, 2} w_n ). 
\end{aligned}
\end{align}
Then we bound the contribution of the second term in \eqref{eq a_3} by 
\begin{align}
\abs{\im \int_{\R^d} P_{\varepsilon}P_{+} [a_3 (\partial_{x_1}^{\, 3} a_3 (x_1)) w_n]  \cdot \overline{P_{\varepsilon}P_{+} w_n} \, dx } = \mathcal{O} (\frac{\norm{P_{\varepsilon} w_n}_{L_x^2}^2}{n}) 
\end{align}
and have \eqref{eq Absorb} absorb the contribution of the third term in \eqref{eq a_3} 
\begin{align}
\im \int_{\R^d} P_{\varepsilon}P_{+} [a_3(x_1) (\partial_{x_1}^{\, 2} a_3 (x_1)) (\partial_{x_1} w_n) ] \cdot \overline{P_{\varepsilon}P_{+} w_n} \, dx .
\end{align}

Using the same calculation in \eqref{eq w2} and \eqref{eq err1}, we have the bound for the contribution of the fourth term in \eqref{eq a_3} 
\begin{align}
\abs{\im \int_{\R^d} P_{\varepsilon}P_{+} [a_3(x_1) (\partial_{x_1} a_3 (x_1)) (\partial_{x_1}^{\, 2} w_n )]  \cdot \overline{P_{\varepsilon}P_{+} w_n} \, dx } = \mathcal{O} (\frac{\norm{P_{\varepsilon} w_n}_{L_x^2}^2}{n}) + \mathcal{O} (\frac{\norm{P_{\varepsilon} w_n}_{L_x^2}^2}{\varepsilon^2 n} ).
\end{align}

Now we only need to control the contribution of the  first term in \eqref{eq a_3}, that is,
\begin{align}
& \quad \im \int_{\R^d} P_{\varepsilon}P_{+} a_3^2(x_1) \partial_{x_1}^{\, 3} w_n  \cdot \overline{P_{\varepsilon}P_{+} w_n} \, dx \\
& = \im \int_{\R^d} P_{\varepsilon}P_{+} [a_3(x_1) \partial_{x_1}^{\, 3} (a_3 (x_1)w_n)] \cdot \overline{P_{\varepsilon}P_{+} w_n} \, dx + \mathcal{O} (\frac{\norm{P_{\varepsilon} w_n}_{L_x^2}^2}{n}) + \mathcal{O} (\frac{\norm{P_{\varepsilon} w_n}_{L_x^2}^2}{\varepsilon^2 n} )\\
& = \im \int_{\R^d} a_3(x_1) P_{\varepsilon}P_{+} \partial_{x_1}^{\, 3} (a_3 (x_1)w_n) \cdot \overline{P_{\varepsilon}P_{+} w_n} \, dx + \mathcal{O} (\frac{\norm{P_{\varepsilon} w_n}_{L_x^2}^2}{n}) + \mathcal{O} (\frac{\norm{P_{\varepsilon} w_n}_{L_x^2}^2}{\varepsilon^2 n} ) + \mathcal{O} (\frac{\norm{P_{\varepsilon} \partial_{x_1}^{\, 2} w_n}_{L_x^2}^2}{n} )  \\
& = - \im \int_{\R^d}  P_{\varepsilon}P_{+} \partial_{x_1}^{\, 2} (a_3 (x_1)w_n) \cdot \partial_{x_1} a_3(x_1) \overline{P_{\varepsilon}P_{+} w_n} \, dx + \mathcal{O} (\frac{\norm{P_{\varepsilon} w_n}_{L_x^2}^2}{n}) + \mathcal{O} (\frac{\norm{P_{\varepsilon} w_n}_{L_x^2}^2}{\varepsilon^2 n} ) + \mathcal{O} (\frac{\norm{P_{\varepsilon} w_n}_{L_x^2}^2}{\varepsilon^4 n} )\\
& = - \im \int_{\R^d}  P_{\varepsilon}P_{+} \partial_{x_1}^{\, 2} (a_3 (x_1)w_n) \cdot  \overline{\partial_{x_1} P_{\varepsilon}P_{+} (a_3 w_n)} \, dx + \mathcal{O} (\frac{\norm{P_{\varepsilon} w_n}_{L_x^2}^2}{n}) + \mathcal{O} (\frac{\norm{P_{\varepsilon} w_n}_{L_x^2}^2}{\varepsilon^2 n} ) + \mathcal{O} (\frac{\norm{P_{\varepsilon} w_n}_{L_x^2}^2}{\varepsilon^4 n} )\\
& = - \im \int_{\R^d}  P_{\varepsilon}P_{+}  (a_3 (x_1)w_n) \cdot  \overline{\partial_{x_1}^{\, 3} P_{\varepsilon}P_{+} (a_3(x_1) w_n)} \, dx + \mathcal{O} (\frac{\norm{P_{\varepsilon} w_n}_{L_x^2}^2}{n}) + \mathcal{O} (\frac{\norm{P_{\varepsilon} w_n}_{L_x^2}^2}{\varepsilon^2 n} ) + \mathcal{O} (\frac{\norm{P_{\varepsilon} w_n}_{L_x^2}^2}{\varepsilon^4 n} )\\
& = \im \int_{\R^d}  \overline{P_{\varepsilon}P_{+}  (a_3 (x_1)w_n)} \cdot  \partial_{x_1}^{\, 3} P_{\varepsilon}P_{+} (a_3(x_1) w_n) \, dx + \mathcal{O} (\frac{\norm{P_{\varepsilon} w_n}_{L_x^2}^2}{n}) + \mathcal{O} (\frac{\norm{P_{\varepsilon} w_n}_{L_x^2}^2}{\varepsilon^2 n} ) + \mathcal{O} (\frac{\norm{P_{\varepsilon} w_n}_{L_x^2}^2}{\varepsilon^4 n} )\\
& = \im \int_{\R^d}  \overline{\widehat{P_{\varepsilon}P_{+}  (a_3 (x_1)w_n)}} \cdot  (i \xi_1)^3 \widehat{P_{\varepsilon}P_{+} (a_3(x_1) w_n)} \, d\xi + \mathcal{O} (\frac{\norm{P_{\varepsilon} w_n}_{L_x^2}^2}{n}) + \mathcal{O} (\frac{\norm{P_{\varepsilon} w_n}_{L_x^2}^2}{\varepsilon^2 n} ) + \mathcal{O} (\frac{\norm{P_{\varepsilon} w_n}_{L_x^2}^2}{\varepsilon^4 n} )\\
& = - \re \int_{\R^d} \xi_1^3 \abs{\widehat{P_{\varepsilon}P_{+} (a_3(x_1) w_n)}}^2 \, d\xi + \mathcal{O} (\frac{\norm{P_{\varepsilon} w_n}_{L_x^2}^2}{n}) + \mathcal{O} (\frac{\norm{P_{\varepsilon} w_n}_{L_x^2}^2}{\varepsilon^2 n} ) + \mathcal{O} (\frac{\norm{P_{\varepsilon} w_n}_{L_x^2}^2}{\varepsilon^4 n} ) .
\end{align}

\noindent {\it \underline{Term \eqref{eq w4}.}}
Similarly, we compute
\begin{align}
& \quad \int_{\R^d} P_{\varepsilon}P_{+} b(x_1) \partial_{x_1}^{\, 3} w_n  \cdot \overline{P_{\varepsilon}P_{+} w_n} \, dx \\
& = \int_{\R^d} b(x_1) P_{\varepsilon}P_{+}  \partial_{x_1}^{\, 3} w_n  \cdot \overline{P_{\varepsilon}P_{+} w_n} \, dx + \mathcal{O} (\frac{\norm{P_{\varepsilon} \partial_{x_1}^{\, 2} w_n}_{L_x^2}^2}{n}) \\
& = \int_{\R^d} b(x_1) \partial_{x_1}^{\, 3} P_{\varepsilon}P_{+} w_n  \cdot \overline{P_{\varepsilon}P_{+} w_n} \, dx + \mathcal{O} (\frac{\norm{P_{\varepsilon} w_n}_{L_x^2}^2}{\varepsilon^4 n}) \\
& = - \int_{\R^d} \partial_{x_1}^{\, 2} P_{\varepsilon}P_{+} w_n  \cdot \overline{ \partial_{x_1} b(x_1) P_{\varepsilon}P_{+} w_n} \, dx + \mathcal{O} (\frac{\norm{P_{\varepsilon} w_n}_{L_x^2}^2}{\varepsilon^4 n}) \\
& = - \int_{\R^d} \partial_{x_1}^{\, 2} P_{\varepsilon}P_{+} w_n  \cdot \overline{ \partial_{x_1} P_{\varepsilon}P_{+} ( b(x_1) w_n)} \, dx +  \mathcal{O} (\frac{\norm{P_{\varepsilon} w_n}_{L_x^2}^2}{n}) + \mathcal{O} (\frac{\norm{P_{\varepsilon} w_n}_{L_x^2}^2}{\varepsilon^4 n})  \\
& = - \int_{\R^d} P_{\varepsilon}P_{+} w_n  \cdot \overline{ \partial_{x_1}^{\, 3} P_{\varepsilon}P_{+} ( b(x_1) w_n)} \, dx +  \mathcal{O} (\frac{\norm{P_{\varepsilon} w_n}_{L_x^2}^2}{n}) + \mathcal{O} (\frac{\norm{P_{\varepsilon} w_n}_{L_x^2}^2}{\varepsilon^4 n})  \\
& = - \int_{\R^d} \overline{ \overline{P_{\varepsilon}P_{+} w_n}  \cdot P_{\varepsilon}P_{+}\partial_{x_1}^{\, 3}  ( b(x_1) w_n)} \, dx +  \mathcal{O} (\frac{\norm{P_{\varepsilon} w_n}_{L_x^2}^2}{n}) + \mathcal{O} (\frac{\norm{P_{\varepsilon} w_n}_{L_x^2}^2}{\varepsilon^4 n}) .
\end{align}
Then
\begin{align}
\re \int_{\R^d} P_{\varepsilon}P_{+} b(x_1) \partial_{x_1}^{\, 3} w_n  \cdot \overline{P_{\varepsilon}P_{+} w_n} \, dx =  \mathcal{O} (\frac{\norm{P_{\varepsilon} w_n}_{L_x^2}^2}{n}) + \mathcal{O} (\frac{\norm{P_{\varepsilon} w_n}_{L_x^2}^2}{\varepsilon^4 n}) .
\end{align}

Finally, \eqref{eq wn} becomes
\begin{align}
\partial_t \abs{P_{\varepsilon}P_{+} w_n}^2 & = \re \int_{\R^d} \xi_1 \abs{\widehat{P_{\varepsilon} P_{+} (\widetilde{a}_1 (x_1)w_n)}}^2 \, d\xi  - \re \int_{\R^d} \xi_1^3 \abs{\widehat{P_{\varepsilon}P_{+} (a_3(x_1) w_n)}}^2 \, d\xi \\
& \quad + c \norm{V}_{L_x^{\infty}} \norm{P_{\varepsilon} w_n}_{L_x^2}^2 + c \norm{e^{\mu} \Phi_n (x_1) H }_{L_x^2} \norm{P_{\varepsilon} w_n}_{L_x^2} \\
& \quad + \mathcal{O} (\frac{\norm{P_{\varepsilon} w_n}_{L_x^2}^2}{n}) + \mathcal{O} (\frac{\norm{P_{\varepsilon} w_n}_{L_x^2}^2}{\varepsilon^2 n}) + \mathcal{O} (\frac{\norm{P_{\varepsilon} w_n}_{L_x^2}^2}{\varepsilon^4 n}) ,
\end{align}
where $a_1^2 (x_1) = 4 (\beta \varphi_n^2)^3$, $\widetilde{a}_1^2 (x_1) =  4 (\beta \varphi_n^2)^3 + \mathcal{O}(\frac{1}{n})$ and $a_3^2 (x_1) = 4 \beta \varphi_n^2$.

Since $a_1^2 (x_1) $ and $\widetilde{a}_1^2 (x_1)$ are close enough, we consider the error term of the following form
\begin{align}
&  \quad \re \int_{\R^d} \xi_1 \abs{\widehat{P_{\varepsilon} P_{+} (a_1 (x_1)w_n)}}^2 \, d\xi  - \re \int_{\R^d} \xi_1^3 \abs{\widehat{P_{\varepsilon}P_{+} (a_3(x_1) w_n)}}^2 \, d\xi \\
& = 4 \re \int_{\R^d} \xi_1 \beta^3 \abs{\widehat{P_{\varepsilon} P_{+} (\varphi_n^3 w_n)}}^2 - \xi_1^3 \beta  \abs{\widehat{P_{\varepsilon}P_{+} (\varphi_n w_n)}}^2 \, d\xi \\
& = 4 \re \int_{\R^d} \xi_1 \beta^3 \abs{\widehat{P_{\varepsilon} P_{+} (\varphi_n^3 w_n)}}^2  - \xi_1 \beta^3  \abs{\widehat{P_{\varepsilon}P_{+} (\varphi_n w_n)}}^2   + \xi_1 \beta^3  \abs{\widehat{P_{\varepsilon}P_{+} (\varphi_n w_n)}}^2  - \xi_1^3 \beta  \abs{\widehat{P_{\varepsilon}P_{+} (\varphi_n w_n)}}^2  \, d\xi\\
& = 4 \re \int_{\R^d} \xi_1 \beta^3 (\abs{\widehat{P_{\varepsilon} P_{+} (\varphi_n^3 w_n)}}^2  -  \abs{\widehat{P_{\varepsilon}P_{+} (\varphi_n w_n)}}^2 ) \, d\xi  + 4 \re \int_{\R^d} (\xi_1 \beta^3 - \xi_1^3 \beta ) \abs{\widehat{P_{\varepsilon}P_{+} (\varphi_n w_n)}}^2 \, d\xi .
\end{align}

The second term above is negative under the definition of the projection $P_+$. 

Next, we focus on the error term 
\begin{align}
&  \quad \abs{\re \int_{\R^d} \xi_1 \beta^3 (\abs{\widehat{P_{\varepsilon} P_{+} (\varphi_n^3 w_n)}}^2  -  \abs{\widehat{P_{\varepsilon}P_{+} (\varphi_n w_n)}}^2 ) \, d\xi } \\
& \leq \abs{\re \int_{\R^d} \xi_1 \beta^3 (\abs{\widehat{P_{\varepsilon} P_{+} (\varphi_n^3 w_n)}}^2  -  \abs{\widehat{P_{\varepsilon}P_{+} w}}^2 ) \, d\xi} +  \abs{\re \int_{\R^d} \xi_1 \beta^3 (\abs{\widehat{P_{\varepsilon} P_{+} (\varphi_n w_n)}}^2  -  \abs{\widehat{P_{\varepsilon}P_{+} w}}^2 ) \, d\xi} ,
\end{align}
where $P_{\varepsilon} P_{+} \varphi_n^3 w_n \to  P_{\varepsilon} P_{+} w$, with $w =e^{-i\beta^4 t} e^{\beta x_1} u$ in $L_{\xi}^2$, hence $\abs{\xi_1}^{\frac{1}{2}} P_{\varepsilon} P_{+} w_n \to \abs{\xi_1}^{\frac{1}{2}}  P_{\varepsilon} P_{+} w$ in $L_{\xi}^2$. Then
\begin{align}
\int_{\R^d} \xi_1  \abs{\widehat{P_{\varepsilon} P_{+} (\varphi_n^3 w_n)}}^2 = \int_{\R^d} \xi_1  \abs{\widehat{P_{\varepsilon}P_{+} w}}^2 + o(1) \\
\int_{\R^d}  \xi_1 \abs{\widehat{P_{\varepsilon} P_{+} (\varphi_n w_n)}}^2  = \int_{\R^d} \xi_1   \abs{\widehat{P_{\varepsilon}P_{+} w}}^2 + o(1) 
\end{align}
and
\begin{align}
\re \int_{\R^d} \xi_1 \beta^3 (\abs{\widehat{P_{\varepsilon} P_{+} (\varphi_n^3 w_n)}}^2  -  \abs{\widehat{P_{\varepsilon}P_{+} (\varphi_n w_n)}}^2 ) \, d\xi  = o(1).
\end{align}
Therefore
\begin{align}
\re \int_{\R^d} \xi_1 \abs{\widehat{P_{\varepsilon} P_{+} (\widetilde{a}_1 (x_1)w_n)}}^2 \, d\xi  - \re \int_{\R^d} \xi_1^3 \abs{\widehat{P_{\varepsilon}P_{+} (a_3(x_1) w_n)}}^2 \, d\xi  = o(1).
\end{align}

Combining all the computations above, we have
\begin{align}
\partial_t \int_{\R^d} \abs{P_{\varepsilon} P_{+} w_n}^2 \, dx & \leq  c \norm{e^{\mu} \Phi_n (x_1) H }_{L_x^2} \norm{P_{\varepsilon} w_n}_{L_x^2} + c \norm{V }_{L_x^{\infty}} \norm{P_{\varepsilon} w_n}_{L_x^2}^2 \\
& \quad + \mathcal{O} (\frac{\norm{P_{\varepsilon} w_n}_{L_x^2}^2}{n}) + \mathcal{O} (\frac{\norm{P_{\varepsilon} w_n}_{L_x^2}^2}{\varepsilon^2 n}) + \mathcal{O} (\frac{\norm{P_{\varepsilon} w_n}_{L_x^2}^2}{\varepsilon^4 n})  + o(1) \\
& \leq c \norm{e^{\mu} \Phi_n (x_1) H }_{L_x^2} \norm{P_{\varepsilon} w_n}_{L_x^2} + c \norm{V }_{L_x^{\infty}} \norm{P_{\varepsilon} w_n}_{L_x^2}^2 + \mathcal{O} (\frac{\norm{P_{\varepsilon} w_n}_{L_x^2}^2}{\varepsilon^4 n})  + o(1) .  \label{eq P+}
\end{align}

Arguing similarly for $P_-$, we obtain
\begin{align}
\partial_t \int_{\R^d} \abs{P_{\varepsilon} P_{-} w_n}^2 \, dx & \geq  - c \norm{e^{\mu} \Phi_n (x_1) H }_{L_x^2} \norm{P_{\varepsilon} w_n}_{L_x^2} - c \norm{V }_{L_x^{\infty}} \norm{P_{\varepsilon} w_n}_{L_x^2}^2 \\
& \quad + \mathcal{O} (\frac{\norm{P_{\varepsilon} w_n}_{L_x^2}^2}{n}) + \mathcal{O} (\frac{\norm{P_{\varepsilon} w_n}_{L_x^2}^2}{\varepsilon^2 n}) + \mathcal{O} (\frac{\norm{P_{\varepsilon} w_n}_{L_x^2}^2}{\varepsilon^4 n})  + o(1) \\
& \geq  - c \norm{e^{\mu} \Phi_n (x_1) H }_{L_x^2} \norm{P_{\varepsilon} w_n}_{L_x^2} - c \norm{V }_{L_x^{\infty}} \norm{P_{\varepsilon} w_n}_{L_x^2}^2 + \mathcal{O} (\frac{\norm{P_{\varepsilon} w_n}_{L_x^2}^2}{\varepsilon^4 n})  + o(1) . \label{eq P-}
\end{align}

\noindent {\bf Step 4: Estimating $L^2$ norm of $w_n$.}

By the definition of the supremum, there exists $t_n \in [0,1]$ such that
\begin{align}\label{eq sup}
\norm{P_{\varepsilon}w_n (t_n, \cdot)}_{L_x^2}^2 \geq \frac{1}{2} \sup_{t \in [0,1]} \norm{P_{\varepsilon}w_n (t, \cdot)}_{L_x^2}^2 ,
\end{align}
when we choose $\varepsilon = n^{-\frac{1}{10}}$.

Now the fundamental theorem of calculus in time $t$ applying on \eqref{eq P+} and \eqref{eq P-} yields  
\begin{align}
\norm{P_{\varepsilon}P_{+} w_n (1, \cdot)}_{L_x^2}^2 & - \norm{P_{\varepsilon}P_{+} w_n (t_n, \cdot)}_{L_x^2}^2  \\
& \geq  - \int_{t_n}^1 \norm{V}_{L_x^{\infty}} \norm{P_{\varepsilon} w_n}_{L_x^2}^2 - \int_{t_n}^1  \norm{e^{\mu}\Phi_n H }_{L_x^2} \norm{P_{\varepsilon} w_n}_{L_x^2} \,  dt + \mathcal{O} (\frac{\norm{P_{\varepsilon} w_n}_{L_x^2}^2}{\varepsilon^4 n})   + o(1)\\
\norm{P_{\varepsilon}P_{-} w_n (t_n, \cdot)}_{L_x^2}^2 & - \norm{P_{\varepsilon}P_{-} w_n (0, \cdot)}_{L_x^2}^2 \\
& \leq \int_0^{t_n} \norm{V}_{L_x^{\infty}} \norm{P_{\varepsilon} w_n}_{L_x^2}^2 + \int_0^{t_n}  \norm{e^{\mu} \Phi_n H }_{L_x^2} \norm{P_{\varepsilon} w_n}_{L_x^2} \,  dt + \mathcal{O} (\frac{\norm{P_{\varepsilon} w_n}_{L_x^2}^2}{\varepsilon^4 n}) + o(1) .
\end{align}

Then
\begin{align}
\norm{P_{\varepsilon}w_n (t_n, \cdot)}_{L_x^2}^2 & = \norm{P_{\varepsilon}P_{+} w_n (t_n, \cdot)}_{L_x^2}^2 + \norm{P_{\varepsilon}P_{-} w_n (t_n, \cdot)}_{L_x^2}^2 \\
& \leq  \norm{P_{\varepsilon}P_{+} w_n (1, \cdot)}_{L_x^2}^2  + \int_{t_n}^1 \norm{V}_{L_x^{\infty}} \norm{P_{\varepsilon} w_n}_{L_x^2}^2 \, dt + \int_{t_n}^1 \norm{e^{\mu} \Phi_n H }_{L_x^2} \norm{P_{\varepsilon} w_n}_{L_x^2} \, dt\\
& \quad + \norm{P_{\varepsilon}P_{-} w_n (0, \cdot)}_{L_x^2}^2  + \int_0^{t_n} \norm{V}_{L_x^{\infty}} \norm{P_{\varepsilon} w_n}_{L_x^2}^2 \, dt + \int_0^{t_n} \norm{e^{\mu} \Phi_n H }_{L_x^2} \norm{P_{\varepsilon} w_n}_{L_x^2} \, dt \\
& \quad + \mathcal{O} (\frac{\norm{P_{\varepsilon} w_n}_{L_x^2}^2}{\varepsilon^4 n})  + o(1) \\
& \leq  \norm{P_{\varepsilon} w_n (1, \cdot)}_{L_x^2}^2 + \norm{P_{\varepsilon} w_n (0, \cdot)}_{L_x^2}^2 + \int_{0}^1 \norm{V}_{L_x^{\infty}} \norm{P_{\varepsilon} w_n}_{L_x^2}^2 \, dt + \int_{0}^1 \norm{e^{\mu} \Phi_n H }_{L_x^2} \norm{P_{\varepsilon} w_n}_{L_x^2} \, dt  \\ 
& \quad +  \frac{c}{\e^4 n} \norm{P_\e w_n}_{L_x^2}^2 + o(1) . \label{eq 23}
\end{align}

Since we chose $\varepsilon = n^{-\frac{1}{10}}$,  for $n$ large enough, we will have
\begin{align}
1 - \frac{c}{\e^4 n} = 1 - \frac{c}{n^{\frac{3}{5}}} > \frac{1}{2} .
\end{align}

Then let the left-hand side of \eqref{eq 23} absorb the $\frac{c}{\e^4 n} \norm{P_\e w_n}_{L_x^2}^2$ term, and we write
\begin{align}
\norm{P_{\varepsilon}w_n (t_n, \cdot)}_{L_x^2}^2 & \leq 2 \norm{P_{\varepsilon} w_n (1, \cdot)}_{L_x^2}^2 + 2 \norm{P_{\varepsilon} w_n (0, \cdot)}_{L_x^2}^2 + 2 \int_{0}^1 \norm{V}_{L_x^{\infty}} \norm{P_{\varepsilon} w_n}_{L_x^2}^2 \, dt +  2 \int_{0}^1 \norm{e^{\mu} \Phi_n H }_{L_x^2} \norm{P_{\varepsilon} w_n}_{L_x^2} \, dt  \\
& \leq \norm{P_{\varepsilon}w_n (t_n, \cdot)}_{L_x^2}^2 \leq 2 \norm{P_{\varepsilon} w_n (1, \cdot)}_{L_x^2}^2 + 2 \norm{P_{\varepsilon} w_n (0, \cdot)}_{L_x^2}^2 \\
& \quad + 2 \sup_{t \in [0,1]} \norm{P_{\varepsilon} w_n}_{L_x^2}^2 \int_{0}^1 \norm{V}_{L_x^{\infty}} \, dt + \frac{1}{100} \sup_{t \in [0,1]} \norm{P_{\varepsilon} w_n}_{L_x^2}^2 + 100 (\int_{0}^1 \norm{e^{\mu} \Phi_n H }_{L_x^2} \, dt )^2 .
\end{align}
By choosing $\varepsilon_0$ in \eqref{eq V} such that
\begin{align}
\int_0^1 \norm{V}_{L_x^{\infty}} \, dt < \frac{1}{100}
\end{align}
we have
\begin{align}
\norm{P_{\varepsilon}w_n (t_n, \cdot)}_{L_x^2}^2 &  \leq 2 \norm{P_{\varepsilon} w_n (1, \cdot)}_{L_x^2}^2 + 2 \norm{P_{\varepsilon} w_n (0, \cdot)}_{L_x^2}^2 \\
& \quad + \frac{1}{50} \sup_{t \in [0,1]} \norm{P_{\varepsilon} w_n}_{L_x^2}^2  + \frac{1}{100} \sup_{t \in [0,1]} \norm{P_{\varepsilon} w_n}_{L_x^2}^2 + 100 (\int_{0}^1 \norm{e^{\mu} \Phi_n H }_{L_x^2} \, dt )^2 \\
& \leq 2 \norm{P_{\varepsilon} w_n (1, \cdot)}_{L_x^2}^2 + 2 \norm{P_{\varepsilon} w_n (0, \cdot)}_{L_x^2}^2  + \frac{3}{100} \sup_{t \in [0,1]} \norm{P_{\varepsilon} w_n}_{L_x^2}^2 + 100 (\int_{0}^1 \norm{e^{\mu} \Phi_n H }_{L_x^2} \, dt )^2 .
\end{align}

Using \eqref{eq sup}, we write
\begin{align}
\frac{1}{2}\sup_{t \in [0,1]} \norm{P_{\varepsilon}w_n (t, \cdot)}_{L_x^2}^2 & \leq \norm{P_{\varepsilon}w_n (t_n, \cdot)}_{L_x^2}^2 \\
& \leq 2 \norm{P_{\varepsilon} w_n (1, \cdot)}_{L_x^2}^2 + 2 \norm{P_{\varepsilon} w_n (0, \cdot)}_{L_x^2}^2  + \frac{3}{100} \sup_{t \in [0,1]} \norm{P_{\varepsilon} w_n}_{L_x^2}^2 + 100 (\int_{0}^1 \norm{e^{\mu} \Phi_n H }_{L_x^2} \, dt )^2 
\end{align}
then
\begin{align}
\sup_{t \in [0,1]} \norm{P_{\varepsilon}w_n (t, \cdot)}_{L_x^2}^2 & \leq 10 \norm{P_{\varepsilon} w_n (1, \cdot)}_{L_x^2}^2 + 10 \norm{P_{\varepsilon} w_n (0, \cdot)}_{L_x^2}^2 + 1000 (\int_{0}^1 \norm{e^{\mu} \Phi_n H }_{L_x^2} \, dt )^2 .
\end{align}

Notice that we chose $\varepsilon = n^{-\frac{1}{10}}$ and now we only have one limit in $n$ to take. 

Letting $n \to \infty$, we obtain the desired inequality,
\begin{align}
\sup_{t \in [0,1]} \norm{u (t)}_{L_x^2(e^{2\beta x_1} \, dx)}^2 \lesssim \norm{u_1}_{L_x^2(e^{2\beta x_1} \, dx)}^2 + \norm{u_0}_{L_x^2(e^{\beta x_1} \, dx)}^2 +  \norm{ H }_{L_t^1 L_x^2(e^{2\beta x_1} \, dx)}^2  .
\end{align}
Now we complete the proof of Lemma \ref{lem con}.
\end{proof}

\section{Upgraded Exponential Decay Estimate}\label{sec Logcon+}
In this section, we prove an interior estimate for the rapidly decaying solutions, and upgrade it to a super-linear exponential decrease estimate.

\subsection{Linear exponential decay estimate}
\begin{lem}[Linear exponential decay estimate in all directions]\label{lem allcon}
If in addition to the hypothesis in Lemma \ref{lem con} one has that for some $\beta >0$
\begin{align}
u_0, u_1 \in L_x^2 (e^{2\beta \abs{x}} \, dx)
\end{align}
and $H \in L_t^1([0,1] : L_x^2 (e^{\beta \abs{x}} \, dx))$, then
\begin{align}
\sup_{t \in [0,1]} \norm{u(t)}_{L^2 (e^{2\beta \abs{x} / d} \, dx)}^2 \leq C(d) ( \norm{u_0}_{L^2 (e^{2\beta \abs{x} }\, dx)}^2 + \norm{u_1}_{L^2 (e^{2\beta \abs{x}}  \, dx)}^2 + \norm{H}_{L_t^1 L_x^2  (e^{2\beta \abs{x}}  \, dx)}^2 )
\end{align}
with $C(d)$ independent of $\beta >0$. 
\end{lem}
\begin{proof}[Proof of Lemma \ref{lem allcon}]
By Lemma \ref{lem con}, we have that for any $\beta>0$, 
\begin{align}
\sup_{t \in [0,1]} \norm{u(t)}_{L^2 (e^{\pm 2\beta x_j} \, dx)}^2 \leq C ( \norm{u_0}_{L^2 (e^{2\beta |x|} \, dx)}^2 + \norm{u_1}_{L^2 (e^{2\beta |x|} \, dx)}^2 + \norm{H}_{L_t^1 L_x^2  (e^{2\beta |x|} \, dx)}^2 ) := \Phi, 
\end{align}
for any $j=1, \ldots, d$. Hence, for any $t\in [0,1]$, 
\begin{align}
\norm{u}_{L^2(e^{2\beta \abs{x}/d} dx)}^2 & = \int_{\R^d} \abs{u}^2 e^{2\beta \abs{x} / d} \, dx  \leq \int_{\R^d} \abs{u}^2 e^{2\beta \sum_j |x_j| / d} \, dx \\
& =  \int_{\R^d} \prod_j \parenthese{\abs{u}^{\frac{2}{d}} e^{2 \beta |x_j| / d} } \, dx \leq \prod_j \parenthese{\int_{\R^d} \abs{u}^2 e^{2 \beta |x_j| } \, dx }^{\frac{1}{d}} \\
& \leq \sum_j \int_{\R^d} \abs{u}^2 e^{2 \beta |x_j| } \, dx = \sum_j \norm{u }_{e^{2 \beta |x_j|} \, dx}^2 \leq C(d) \Phi\\
& = C(d) (\norm{u_0}_{L^2 (e^{2\beta |x|} \, dx)}^2 + \norm{u_1}_{L^2 (e^{2\beta |x|} \, dx)}^2 +  \norm{H}_{L_t^1 L_x^2  (e^{2\beta |x|} \, dx)}^2 )
\end{align}
This completes the proof of Lemma \ref{lem allcon}.
\end{proof}

\subsection{Super-linear exponential decrease}
\begin{lem}[Super-linear exponential decay estimate]\label{lem Super-logcon}
In addition to the hypotheses of Lemma \ref{lem con}, we assume that for some $\lambda>0$, and $\alpha>1$, $u_0, u_1 \in L^2(e^{\lambda|x|^\alpha}\,dx)$. Additionally, let $u \in C^1([0,1] : H^3(\mathbb{R}^d))$, then, there exists $c_\alpha>0$ such that
\begin{align*}
    \sup_{t \in [0,1]} \int_{|x|\ge c_\alpha} |u(t,x)|^2 e^{\lambda|x|^{\alpha}/(10 d)^{\alpha}} \,dx & \lesssim \norm{u_0}_{L^2(e^{\lambda|x|^\alpha}\,dx)}^2 +\norm{u_1}_{L^2(e^{\lambda|x|^\alpha}\,dx)}^2 \\
    & \quad + \int_0^1 \int_{\R^d} \abs{H(t,x)}^2 e^{\lambda \abs{x}^{\alpha}} \, dx \, dt+ \sum_{j=1}^d \sum_{l=0}^3 \int_0^1 \int_{\R^d} |\partial_{x_j}^l u(t,x)|^2\,dx \, dt.
\end{align*}
\end{lem}
We note that the factor $10$ on the left-hand side of the inequality is not essential, and it suffices for it to be slightly greater than $2$.

\begin{proof}[Proof of Lemma \ref{lem Super-logcon}]

Let $\eta (x) \in C^\infty$ be  non-decreasing, radial and such that 
\begin{align}
\eta (x) = 
\begin{cases}
0, & \text{if } \abs{x} \leq 1, \\
1, & \text{if } \abs{x} \geq 2. 
\end{cases}
\end{align}
We also define $\eta_R(x) = \eta (\frac{x}{R})$.

Let $u_R (t,x) = \eta_R (x) u(t,x) $, where $\eta_R(x)$ defined above.
Then using \eqref{eq ChainRule2}, we have 
\begin{align}
(i \partial_t + \D^2) u_R = V u_R + \widetilde{H}_R ,
\end{align}
where
\begin{align}
\widetilde{H}_R = \eta_R H  - 4 \sum_{j=1}^d (\partial_{x_j} \eta_R )\partial_{x_j}^{3} u - 6 \sum_{j=1}^d (\partial_{x_j}^{\, 2} \eta_R) \partial_{x_j}^{\, 2} u - 4 \sum_{j=1}^d (\partial_{x_j}^{\, 3} \eta_R )\partial_{x_j} u - \sum_{j=1}^d (\partial_{x_j}^{\, 4} \eta_R ) u .
\end{align}

We now use Lemma \ref{lem con} to conclude that
\begin{align}\label{eq 1}
\norm{u_R(t)}_{L^2(e^{2\beta \abs{x} /d} \, dx )}^2 \lesssim \norm{u_R(0)}_{L^2(e^{2\beta \abs{x}} \, dx) }^2 + \norm{u_R(1)}_{L^2 (e^{2\beta \abs{x}} \, dx)}^2 + \norm{\widetilde{H}_R}_{L_t^1 L_x^2 (e^{2\beta \abs{x}} \, dx)}^2.
\end{align}

Hence, using the definition of $\eta_R$ and \eqref{eq 1}, we have 
\begin{align}
& \quad \int_{\abs{x} > 2R} \abs{u (t,x)}^2 e^{2 \beta \abs{x}/d} \, dx  \leq \int_{\R^d} \abs{u_R (t,x)}^2 e^{2 \beta \abs{x}/d} \, dx \\
& \lesssim  \sum_{k=0}^1 \int_{\abs{x} > R} \abs{u_{k,R}}^2 e^{2 \beta \abs{x}} \, dx + \int_0^1 \int_{\abs{x} > R} \abs{H(t,x)}^2 e^{2\beta \abs{x}} \, dx \, dt \\
& \quad + \sum_{j=1}^d \int_{R < \abs{x} < 2 R} e^{2\beta \abs{x}} (\abs{u}^2 R^{-8} + \abs{\partial_{x_j} u}^2 R^{-6} + \abs{\partial_{x_j}^{\, 2} u}^2 R^{-4} + \abs{\partial_{x_j}^{\, 3} u}^2 R^{-2}) \, dx \\
&\lesssim \sum_{k=0}^1 \int_{\abs{x} > R} \abs{u_{k,R}}^2 e^{2 \beta \abs{x}} \, dx + \int_0^1 \int_{\abs{x} > R} \abs{H(t,x)}^2 e^{2\beta \abs{x}} \, dx \, dt \\
& \quad +  \sum_{j=1}^d e^{ 4 \beta R} \int_{R < \abs{x} < 2 R} (\abs{u}^2 R^{-8} + \abs{\partial_{x_j} u}^2 R^{-6} + \abs{\partial_{x_j}^{\, 2} u}^2 R^{-4} + \abs{\partial_{x_j}^{\, 3} u}^2 R^{-2}) \, dx 
\end{align}
where $u_{0, R}=u_R(0, x)$, and $u_{1, R}=u_{R}(1,x)$.
By multiplying the inequality above by $e^{-4\beta R}$, we write 
\begin{align}\label{eq 4}
A & :=e^{-4\beta R}\int_{\abs{x} > 2R} \abs{u (t,x)}^2 e^{2 \beta \abs{x}/d} \, dx \\
& \lesssim e^{-4\beta R} \sum_{k=0}^1 \int_{\abs{x} > R} \abs{u_{k,R}}^2 e^{2 \beta \abs{x}} \, dx + e^{-4\beta R} \int_0^1 \int_{\abs{x} >R}  \abs{H(t,x)}^2  e^{2 \beta \abs{x}} \, dx \, dt \\
& \quad + \sum_{j=1}^d \int_{R < \abs{x} < 2 R} (\abs{u}^2 R^{-8} + \abs{\partial_{x_j} u}^2 R^{-6} + \abs{\partial_{x_j}^{\, 2} u}^2 R^{-4} + \abs{\partial_{x_j}^{\, 3} u}^2 R^{-2}) \, dx \\
& =: D_1+D_2 + D_3 . \label{eq 12}
\end{align}

We fix $4\beta R=2bR^{\alpha}$. 
Integrating the inequality \eqref{eq 4} in $R$ in the interval $[0,\infty )$, and consider the resulting terms separately. Using Fubini's theorem, the $D_1$ term can be written as
\begin{align}
\int_0^{\infty} e^{-2b R^{\alpha}} \sum_{k=0}^1 \int_{\abs{x} > R} \abs{u_j (x)}^2 e^{2 \beta \abs{x}} \, dx \, dR = \sum_{j=0}^1 \int_{\abs{x} >1} \parenthese{\int_1 ^r e^{-2b R^{\alpha} + b R^{\alpha -1} r} \, dR} \abs{u_j (x)}^2 \, dx ,
\end{align}
where $r = \abs{x}$.

To deduce an upper bound for this expression we see that $\varphi (R)  = b R^{\alpha -1} (r -2R)$ has  its maximum at 
\begin{align}
R_M = \frac{(\alpha -1) r}{2 \alpha } < \frac{r}{2} ,
\end{align}
hence
\begin{align}
\int_1^r e^{-2bR^{\alpha} + b R^{\alpha -1} r} \, dR \leq r e^{\varphi (R_M)} = r e^{b (\alpha -1)^{\alpha -1} r^{\alpha} / (2^{\alpha -1} \alpha^{\alpha})} = r e^{b_{\alpha} r^{\alpha}} = \abs{x} e^{b_{\alpha} \abs{x}^{\alpha}} ,
\end{align}
that is 
\begin{align}
b_{\alpha} = b (\alpha -1)^{\alpha -1} / (2^{\alpha -1} \alpha^{\alpha}) .
\end{align}

This estimate yields the bound
\begin{align}
\sum_{j=0}^1 \int_{\R^d} \abs{u_j (x)}^2 e^{b_{\alpha} \abs{x}^{\alpha}} \abs{x} \, dx .
\end{align}
A similar argument provides the following upper bound for the term coming from $D_2$ in \eqref{eq 12}
\begin{align}
\int_0^1 \int_{\R^d} \abs{H(t,x)}^2 e^{b_{\alpha} \abs{x}^{\alpha}} \abs{x} \, dx  \, dt.
\end{align}

Next, we shall deduce a lower bound for the term arising from $A$. Using again Fubini’s theorem this can be written as
\begin{align}
\int_1^{\infty} e^{-2b R^{\alpha}} \int_{\abs{x} > 2R} \abs{u (t,x)}^2 e^{2 \beta \abs{x} / d} \, dx \, dR = 2 \int_{2}^{\infty} \int_{\mathcal{S}^{d-1}}  \parenthese{\int_1^{\frac{r}{2}} e^{-2bR^{\alpha} + b R^{\alpha -1}r/d} \, dR } \abs{u(t,x)}^2  r^{d-1} \, dS dr .
\end{align}
Since $\eta (R) = -2b R^{\alpha} + b R^{\alpha -1} r/ d = b R^{\alpha-1} (r/d - 2R)$ has its maximum at 
\begin{align}
\widetilde{R}_M = \frac{(\alpha -1) r}{2 \alpha d} <  \frac{r}{2d} \leq \frac{r}{2} ,
\end{align}
we take $R_0 = \frac{(\alpha -1) r}{ 10 \alpha d}$, $r > c_{\alpha}$ and $c_{\alpha} > \frac{10 \alpha d}{\alpha -1} > 2$ to bound from below the integral as
\begin{align}
\int_1^{\frac{r}{2}} e^{-2b R^{\alpha} + b R^{\alpha -1}r/d} \, dR & \geq \int_{R_0}^{\widetilde{R}_M} e^{b R^{\alpha -1} (r/d - 2R)} \, dR \\
& \geq e^{b R_0^{\alpha -1} (r/d - 2 R_0)} (\widetilde{R}_M - R_0) \geq e^{b R_0^{\alpha -1} (r/d - 2 \widetilde{R}_M)} (\widetilde{R}_M - R_0) \\
& \geq \frac{2}{5} \frac{\alpha -1}{\alpha} \frac{r}{d} e^{b(\alpha -1)^{\alpha -1} r^{\alpha} / 10^{\alpha -1} \alpha^{\alpha} d^{\alpha}} = \frac{2}{5} \frac{\alpha -1}{\alpha} \frac{r}{d} e^{b_{\alpha} r^{\alpha} / (5^{\alpha -1} d^{\alpha}) } .
\end{align}
This last expression is thus a lower bound for the exponential part of the integrand on the left hand side of \eqref{eq 4}.

Fixing $b$ such that $b_{\alpha} + \varepsilon = b(\alpha -1)^{\alpha -1} / (2^{\alpha -1} \alpha^{\alpha}) + \varepsilon = \lambda$ with $\varepsilon >0$ small enough.

To bound the $D_3$ term, we compute for $R \gg 1$
\begin{align}
& \quad \sum_{j=1}^d \int_1^{\infty} \int_0^1 \int_{R < \abs{x} < 2 R} (\abs{u}^2 R^{-8} + \abs{\partial_{x_j} u}^2 R^{-6} + \abs{\partial_{x_j}^{\, 2} u}^2 R^{-4} + \abs{\partial_{x_j}^{\, 3} u}^2 R^{-2}) \, dx \, dt \, dR\\
& \leq \sum_{j=1}^d \int_1^{\infty} \int_0^1 \int_{R < \abs{x} < 2R} (\abs{u}^2 R^{-2} + \abs{\partial_{x_j} u}^2 R^{-2} + \abs{\partial_{x_j}^{\, 2} u}^2 R^{-2} + \abs{\partial_{x_j}^{\,3} u}^2 R^{-2}) \, dx \, dt \, dR\\
& = \sum_{j=1}^d \int_0^1 \int_{\abs{x} \geq 1} (\abs{u}^2 + \abs{\partial_{x_j} u}^2 + \abs{\partial_{x_j}^{\, 2} u}^2 + \abs{\partial_{x_j}^{\, 3} u}^2) \parenthese{ \int_{\frac{\abs{x}}{2}}^{\abs{x}} \frac{dR}{R^2} \, dR } \, dx \, dt\\
& \leq \sum_{j=1}^d \int_0^1 \int_{\mathbb{R}} (\abs{u}^2 + \abs{\partial_{x_j} u}^2 + \abs{\partial_{x_j}^{\, 2} u}^2 + \abs{\partial_{x_j}^{\, 3} u}^2 ) \, dx \, dt.
\end{align}
Hence, we obtain that
\begin{align}
\sup_{t \in [0,1]} \int_{|x|\ge c_\alpha} |u(t,x)|^2 e^{\lambda|x|^{\alpha}/(10 d)^{\alpha}} \,dx & \lesssim \norm{u_0}_{L^2(e^{\lambda|x|^\alpha}\,dx)}^2 +\norm{u_1}_{L^2(e^{\lambda|x|^\alpha}\,dx)}^2 \\
& \quad + \int_0^1 \int_{\R^d} \abs{H(t,x)}^2 e^{a \abs{x}^{\alpha}} \, dx \, dt+ \sum_{j=1}^d \sum_{l=0}^3 \int_0^1 \int_{\R^d} |\partial_{x_j}^l u(t,x)|^2\,dx \, dt.
\end{align}
This completes the proof of Lemma \ref{lem Super-logcon}. 
\end{proof}

\begin{rmk}
The results in this section extend to equations of the form
\begin{align}
i \partial_t u + \Delta u = V_1 u+ V_2 \overline{u} + H
\end{align}
with the potentials $V_j$, where $j=1,2$ satisfying the assumption \eqref{eq V}.
\end{rmk}

\subsection{Logarithmic convexity generalized from \cite{HHZ}}\label{ssec Logcon}

A key ingredient in \cite{HHZ} is the following logarithmic convexity in any dimensions.
\begin{lem}[Proposition 1.3 in \cite{HHZ}]\label{lem HHZ}
Suppose $V \in L^{\infty}(\R^d)$ is real-valued, and $u \in C(\R ; L^2 (\R^d))$ solves
\begin{align}
i \partial_t u - (-\Delta_x)^m u = V(x)u .
\end{align}
If there exists $\gamma >0$ such that
\begin{align}
e^{\gamma \abs{x}^{\frac{2m}{2m-1}}} u(0,x) , \quad e^{\gamma \abs{x}^{\frac{2m}{2m-1}}} u(1,x) \in L_x^2 (\R^d),
\end{align}
then for $t \in [0,1]$, we have
\begin{align}
\norm{e^{\gamma \abs{x}^{\frac{2m}{2m-1}}} u(t,x)}_{L_x^2} \leq C e^{\frac{t(1-t)}{4} \norm{V}_{L^{\infty}}^2} \norm{e^{\gamma \abs{x}^{\frac{2m}{2m-1}}} u(0,x)}_{L_x^2}^{1-t} \norm{e^{\gamma \abs{x}^{\frac{2m}{2m-1}}} u(1,x)}_{L_x^2}^{t} .
\end{align}
\end{lem}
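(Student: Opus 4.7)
The plan is to use the Escauriaza--Kenig--Ponce--Vega abstract logarithmic convexity framework, adapted to the fourth-order dispersive operator. The conclusion will follow from convexity (modulo a controlled error) of $\log H(t)$, where $H(t) := \norm{e^{\lambda\abs{x}^{4/3}} u(t)}_{L^2}^2$. First I would regularize the equation by adding an artificial order-$4$ diffusion,
\begin{align}
\partial_t u^\epsilon = -i(-\Delta)^2 u^\epsilon - \epsilon(-\Delta)^2 u^\epsilon - i V u^\epsilon, \qquad \epsilon > 0,
\end{align}
and pass to $\epsilon \to 0$ at the end. Order-$4$ dissipative heat kernels satisfy Gaussian-type bounds of the form $\abs{K_\epsilon(t,x,y)} \lesssim (\epsilon t)^{-d/4}\exp(-c\abs{x-y}^{4/3}/(\epsilon t)^{1/3})$, and the exponent $4/3$ is exactly the Young-conjugate exponent to the order $4$, matching the weight $e^{\lambda\abs{x}^{4/3}}$ in the statement; this ensures $e^{\lambda\abs{x}^{4/3}} u^\epsilon(t)\in L^2$ persists for $t\in[0,1]$ and that all the integrations by parts done below are legal.

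Next, I would conjugate by the weight. Setting $f(t,x) := e^{\phi(x)} u^\epsilon(t,x)$ with $\phi(x) = \lambda\abs{x}^{4/3}$ gives an evolution of the form $\partial_t f = \mathcal{S} f + \mathcal{A} f + (\epsilon - i)Vf$, where $\mathcal{S}^* = \mathcal{S}$ collects the self-adjoint terms of the conjugated operator (those with an even number of gradients of $\phi$) and $\mathcal{A}^* = -\mathcal{A}$ the skew-adjoint ones. Writing $H(t) := \norm{f(t)}_{L^2}^2$ and $N(t) := \inner{\mathcal{S} f, f}/H(t)$, the standard manipulations (cf.\ Lemma 2 of \cite{EKPV_CPDE}) produce
\begin{align}
\partial_t^2 \log H(t) \geq \frac{2\inner{[\mathcal{S},\mathcal{A}] f, f}}{H(t)} - C\norm{V}_{L^\infty}^2.
\end{align}
Once one establishes $\inner{[\mathcal{S},\mathcal{A}] f, f} \geq 0$ (or a controlled negativity absorbable into the $V$-term), the function $\log H(t) + \tfrac{1}{2}\norm{V}_{L^\infty}^2 \, t(1-t)$ is convex on $[0,1]$, and the three-lines inequality immediately yields the claimed bound after sending $\epsilon \to 0$.

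The hard part, and the main obstacle, is verifying this commutator positivity for the $4/3$-exponent weight against the order-$4$ operator. At the symbol level $[\mathcal{S},\mathcal{A}]$ involves derivatives of $\phi$ up to order $8$, and the radial profile $\abs{x}^{4/3}$ produces $\partial^k \phi \sim \abs{x}^{4/3-k}$, which must balance against the order-$4$ dispersion symbol $\abs{\xi}^4$. The HHZ proof carries this through by a direct but lengthy algebraic computation that exploits the precise match between the exponent $4/3$ and the order $4$; alternatively, one can bypass the commutator entirely by invoking the heat-kernel bounds above via duality and analytic continuation in a complexified time parameter, which is the ``same heat-kernel estimate'' strategy the paper alludes to when later extending the argument from $(-\Delta)^2$ to $\D^2$.
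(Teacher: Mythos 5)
The paper does not actually prove this lemma: it is quoted verbatim from Proposition 1.3 of \cite{HHZ}, and the surrounding remark only records the two ingredients of that proof, namely the higher-order heat-kernel bounds \eqref{eq Ker1}--\eqref{eq Ker2} and the abstract commutator lemma (Lemma \ref{lem JEMS}, from \cite{EKPV_JEMS}). Your sketch reproduces exactly these ingredients --- artificial fourth-order diffusion to legitimize the formal manipulations, conjugation by $e^{\lambda\abs{x}^{4/3}}$, and the $\mathcal{S}/\mathcal{A}$ logarithmic-convexity machinery --- so in outline it is the same argument the paper is relying on.

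Two caveats, one of which is a genuine gap. First, the substantive content of the HHZ proof is precisely the step you defer: establishing the operator lower bound $\mathcal{S}_t + [\mathcal{S},\mathcal{A}] \geq -M_0$ (note that Lemma \ref{lem JEMS} only requires boundedness below, not positivity, of the commutator --- your insistence on $\inner{[\mathcal{S},\mathcal{A}]f,f}\geq 0$ is stronger than needed) together with the verification that $\abs{\partial_t f - \mathcal{S}f - \mathcal{A}f}\leq M_1\abs{f}+G$ with finite $M_2$. Without carrying out that computation for the weight $\abs{x}^{4/3}$ against the order-$4$ operator, the proposal is a scheme rather than a proof. Second, your suggested alternative of ``bypassing the commutator entirely'' via the analytic-semigroup bound \eqref{eq Ker2} cannot work on its own: for $z=\epsilon+it$ the decay factor there is $\exp\bracket{-C_2\,\epsilon\,(\abs{x-y}/\abs{z})^{4/3}}$, which degenerates as $\epsilon\to 0$, so the kernel estimates only supply the a priori persistence of the weighted norm for the regularized flow (which is what makes the integrations by parts legal); the $\epsilon$-uniform three-point inequality with the explicit constant $e^{\frac{t(1-t)}{2}\norm{V}_{L^\infty}^2}$ must come from the convexity argument, not from the kernel bounds.
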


\begin{rmk}[Comparison between Lemma \ref{lem HHZ} (which is Proposition 1.3 in \cite{HHZ}) and Lemma \ref{lem con}] 
\begin{enumerate}
\item 
In \cite{HHZ}, the authors established a logarithmic convexity for higher order Schr\"odinger operators in any dimensions, considering real, bounded, and time-independent potentials. Their proof relied on two key ingredients: (i) an estimate for higher-order heat kernels, which can be extended to our `separable' fourth-order Schr\"odinger operator (ii) a formal commutator estimate, as presented in Lemma 2 of \cite{EKPV_JEMS}, which remain valid in our case as well.

\item 
To obtain a nonlinear unique continuation result, it becomes necessary to allow the potential $V$ to take complex values and be time-dependent, which is missing in Lemma \ref{lem HHZ}. In our Lemma \ref{lem con}, the energy estimate method, inspired by \cite{KPV_CPAM}, allows complex-valued and time-dependent potentials, thereby enabling us to obtain the unique continuation for nonlinear equations. 

\item

In Lemma \ref{lem con}, the $H^3$ regularity of the solution is initially required due to the need for taking multiple derivatives. However, Lemma \ref{lem HHZ} only requires $L^2$ regularity. This is done by introducing   an artificial diffusion term into the differential equation, as was first proposed in \cite{EKPV_JEMS}. We believe that this $H^3$ regularity can be relaxed to $L^2$ with artificial diffusion. 

\end{enumerate}

\end{rmk}

\section{A Carleman Inequality}\label{sec Carleman}

In this section, we prove a Carleman estimate with a quadratic exponential weight for the `separable' equation \eqref{eq 4SE}, which will be used in Section \ref{sec Proof}.

\begin{lem}\label{lem Carleman}
Assume that $R>0$ and $\varphi:[0,1] \to \mathbb{R}$ is a smooth function. Let $u (t,x) \in C_c^\infty(\mathbb{R}\times \R^d)$ with support contained in the set
\begin{align}
\{(t,x) \in [0,1] \times \R^d  :  \abs{\frac{x_1}{R}+ \varphi (t))} \geq 1 \} .
\end{align}
Then there exists $c= c(d, \norm{\varphi'}_{L^{\infty}} , \norm{\varphi''}_{L^{\infty}})$ such that the inequality
\begin{align}
\norm{e^{\alpha (\frac{x_1}{R}+ \varphi (t))^2 + \alpha \sum_{j=2}^d (\frac{x_j}{R})^2} (i\partial_t + \D^2) u}_{L_{t,x}^2}^2  \geq c \frac{\alpha^7}{R^8}  \norm{e^{\alpha (\frac{x_1}{R}+ \varphi (t))^2 + \alpha \sum_{j=2}^d (\frac{x_j}{R})^2} u}_{L_{t,x}^2}^2
\end{align}
holds when $\alpha \geq c R^{\frac{4}{3}}$.
\end{lem}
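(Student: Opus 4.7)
The plan is to run the standard conjugation-and-commutator argument for Carleman inequalities, now adapted to the fourth-order \emph{separable} operator $i\partial_t + \D^2$. Writing $\phi(t,x) = \alpha(\frac{x_1}{R}+\varphi(t))^2 + \alpha\sum_{j=2}^d(\frac{x_j}{R})^2$ and setting $f := e^\phi u$, the inequality reduces to estimating $\|\mathcal{L}_\phi f\|_{L^2_{t,x}}^2$ from below, where $\mathcal{L}_\phi := e^\phi(i\partial_t+\D^2)e^{-\phi}$. A useful structural simplification comes from the quadratic nature of $\phi$: since $\partial_j^2\phi = 2\alpha/R^2$ is constant and $\partial_j^k\phi = 0$ for all $k\geq 3$, each one-dimensional conjugation $e^\phi\partial_{x_j}^4 e^{-\phi}$ expands into a polynomial of total degree four in the linear symbol $\phi_j := \partial_j\phi$ and in $\partial_j$, with constant cross-terms coming from $\phi_{jj}$.

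Next I would decompose $\mathcal{L}_\phi = \mathcal{S} + \mathcal{A}$ into its $L^2_x$-symmetric and skew-symmetric parts. The even-order spatial pieces (pure $\partial_j^4$, the second-order terms with factors $\phi_j^2$ and $\phi_{jj}$, and the zeroth-order potential-like term) contribute to $\mathcal{S}$, while the odd-order spatial pieces together with $i\partial_t$ make up $\mathcal{A}$; expressions such as $\phi_j\partial_j^3$ are symmetrized by splitting as $\tfrac12(\phi_j\partial_j^3 + \partial_j^3\phi_j) + \tfrac12[\phi_j,\partial_j^3]$ so each half falls cleanly into one part. Using compact support of $u$ and integration by parts in $t$, one arrives at the key identity
\[
\|\mathcal{L}_\phi f\|_{L^2_{t,x}}^2 = \|\mathcal{S}f\|_{L^2}^2 + \|\mathcal{A}f\|_{L^2}^2 + \inner{(\mathcal{S}_t + [\mathcal{S},\mathcal{A}])f,\,f},
\]
so the Carleman bound reduces to showing $\mathcal{S}_t + [\mathcal{S},\mathcal{A}] \geq c\,\alpha^7 R^{-8}$ as a quadratic form, under the hypotheses $\alpha \geq cR^{4/3}$ and $|\tfrac{x_1}{R}+\varphi|\geq 1$ on $\supp u$.

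The heart of the matter, and the step I expect to be the main obstacle, is the commutator computation. Because $\mathcal{S}$ contains operators of order up to four whose coefficients are polynomials of degree up to four in the linear weights $\phi_j$, while $\mathcal{A}$ carries $i\partial_t$ together with spatial operators of orders one and three, the formal expansion of $[\mathcal{S}, \mathcal{A}]$ yields a very long list of monomials; each reduction $[\partial_j^{a}\phi_k^{b},\ \partial_m^{c}\phi_l^{d}]$ must be handled recursively using $[\partial_j,\phi_k]=\phi_{jk}$. Symbolic computation — as the authors indicate — will be essentially unavoidable. The leading positive contribution should come from the nested commutator of the top symmetric piece $\partial_j^4$ against the top skew piece of type $\phi_j^3\partial_j$ (equivalently, from $[\partial_j^4,\phi_j^4]$-type terms), which after seven effective differentiations yields a multiplier of size $\alpha^7/R^8$, localized precisely where $|\phi_1|\gtrsim \alpha/R$, i.e.\ on $\supp u$ by the assumption $|\tfrac{x_1}{R}+\varphi|\geq 1$. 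The remaining contributions split into (i) lower-order positive pieces of order $\alpha^k/R^{2k}$ for $k<7$, (ii) the time contribution $\mathcal{S}_t$, which involves $\varphi'$ and $\varphi''$ and whose size is controlled by $\|\varphi'\|_{L^\infty}$ and $\|\varphi''\|_{L^\infty}$ together with powers of $\alpha/R^2$, and (iii) cross-error terms of order at most $\alpha^6/R^8$, where the support constraint is again used to turn $\phi_j$ factors into lower bounds. All of these are absorbed into the leading $\alpha^7/R^8$ term once $\alpha \geq cR^{4/3}$ is imposed — the exponent $4/3$ arising from balancing residual terms of relative size $R^{4/3}/\alpha$ against the sharper Carleman gain. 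Finally, discarding the nonnegative squares $\|\mathcal{S}f\|_{L^2}^2 + \|\mathcal{A}f\|_{L^2}^2$ and undoing the conjugation $u = e^{-\phi}f$ yields the stated inequality.
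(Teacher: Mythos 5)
Your proposal follows essentially the same route as the paper: conjugate by the weight, split the conjugated operator into symmetric and skew-symmetric parts, discard the two squares, and extract a positive commutator bound of size $\alpha^7/R^8$ by a (machine-assisted) term-by-term expansion, absorbing the error terms via the support condition $\abs{\tfrac{x_1}{R}+\varphi(t)}\geq 1$ and the constraint $\alpha \gtrsim R^{4/3}$. One minor correction that does not affect the argument: the leading multiplier $\sim \alpha^7 (\tfrac{x_1}{R}+\varphi)^6/R^8$ does not come from $[\partial_{x_1}^4,\phi_1^4]$-type terms but from commuting the zeroth-order piece of $\mathcal{S}$ (the $\tfrac{16\alpha^4\psi^4}{R^4}$ multiplier) against the first-order piece $-\tfrac{32\alpha^3\psi^3}{R^3}\partial_{x_1}$ of $\mathcal{A}$, the genuinely operator-order-mixing commutators instead producing the lower-in-$\alpha$ positive terms $\abs{\partial_{x_1}^k f}^2$, $k=1,2,3$.
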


\begin{proof}[Proof of Lemma \ref{lem Carleman}]
Let $f = e^{\Phi^2} u$, where 
\begin{align}
\Phi^2  (t,x) :=  (\frac{x_1}{R}+ \varphi (t))^2 +  \sum_{j=2}^d (\frac{x_j}{R})^2 =  \psi^2 +  \sum_{j=2}^d (\frac{x_j}{R})^2,
\end{align}
and
\begin{align}
\psi = \frac{x_1}{R}+ \varphi (t) . 
\end{align}

Under this change of variables, we reduce to proving 
\begin{align}
\norm{e^{\alpha\Phi^2} (i\partial_t + \D^2) e^{-\alpha\Phi^2}f}_{L_{t,x}^2}^2 \geq c  \frac{\alpha^7 }{R^8}\norm{f}_{L_{t,x}^2}^2 .
\end{align}

We first write 
\begin{align}
e^{\alpha\Phi^2} (i\partial_t + \D^2) e^{-\alpha\Phi^2}f = : \mathcal{S} f + \mathcal{A} f
\end{align}
where $\mathcal{S}$ and $\mathcal{A}$ are respectively symmetric and anti-symmetric operators (with respect to the $L^2$ norm).

A direct computation gives that
\begin{itemize}
\item 
For $j =1$
\begin{align}
e^{\alpha\Phi^2 } \partial_{x_1}^{\, 4} (e^{\alpha\Phi^2} f) & = \partial_{x_1}^{\, 4} f + \partial_{x_1}^{\, 3} f \left[- \frac{8 \alpha \psi}{R}\right] + \partial_{x_1}^{\, 2} f \left[\frac{24 \alpha^2 \psi^2}{R^2} - \frac{12 \alpha}{R^2}\right] \\
& \quad + \partial_{x_1}f \left[-\frac{32 \alpha^3 \psi^3}{R^3} + \frac{48 \alpha^2 \psi}{R^3}\right] + f\left[ \frac{16 \alpha^4 \psi^4}{R^4} -\frac{48 \alpha^3 \psi^2}{R^4} + \frac{12 \alpha^2}{R^4}\right] ;
\end{align}

\item
For $j = 2 , \cdots , d$
\begin{align}
e^{\alpha\Phi^2 } \partial_{x_j}^{\, 4} (e^{\alpha\Phi^2} f) & = \partial_{x_j}^{\, 4} f + \partial_{x_j}^{\, 3} \left[- \frac{8 \alpha x_j}{R^2}\right] + \partial_{x_j}^{\, 2} \left[\frac{24 \alpha^2 x_j^2}{R^4} - \frac{12 \alpha}{R^2}\right] + \partial_{x_j} f \left[- \frac{32 \alpha^3 x_j^3}{R^6}\right] \\
& \quad + \left[\frac{48 \alpha^2 x_j}{R^4}\right] + f \left[  \frac{16 \alpha^4 x_j^4}{R^8} -\frac{48 \alpha^3 x_j^2}{R^6} + \frac{12 \alpha^2}{R^4}\right] .  
\end{align}
\end{itemize}
Then adding these two cases yields
\begin{align}
e^{\alpha\Phi^2 } \D^2 ( e^{ -\alpha\Phi^2 } f )& = \D^2 f   + \partial_{x_1}^{\, 3} f \left[-  \frac{8 \alpha \psi}{R} \right] + \sum_{j=2}^d   \partial_{x_j}^{\, 3} f \left[- \frac{8 \alpha x_j}{R^2}\right]  \label{eq 14} \\
& \quad + \partial_{x_1}^{\, 2} f \left[\frac{24 \alpha^2 \psi^2}{R^2} - \frac{12 \alpha}{R^2}\right] + \sum_{j=2}^d \partial_{x_j}^{\, 2} f \left[\frac{24 \alpha^2 x_j^2}{R^4} - \frac{12\alpha}{R^2}\right] \\
& \quad + \partial_{x_1} f \left[-\frac{32 \alpha^3 \psi^3}{R^3} +  \frac{48 \alpha^2 \psi}{R^3} \right] + \sum_{j=2}^d \partial_{x_j} f \left[- \frac{32 \alpha^3 x_j^3}{R^6} + \frac{48 \alpha^2 x_j}{R^4}\right] \\
& \quad + f \left[ \frac{16\alpha^4 \psi^4}{R^4} -\frac{48\alpha^3 \psi^2}{R^4} + \sum_{j=2}^d  ( \frac{16 \alpha^4 x_j^4}{R^8} - \frac{48 \alpha^3 x_j^2}{R^6} )  + \frac{12 d \alpha^2 }{R^4}\right] ,
\end{align}
and
\begin{align}
e^{\alpha\Phi^2} i\partial_t (e^{-\alpha\Phi^2} f) = e^{\alpha\Phi^2} i \left(e^{-\alpha\Phi^2} \partial_t f - e^{-\alpha\Phi^2} \alpha \left(\frac{2x_1}{R} \varphi' + 2 \varphi \varphi'\right)f\right) = i \partial_t f -  i \alpha \left(\frac{2x_1}{R} \varphi' + 2 \varphi \varphi'\right) f \label{eq 15}.
\end{align}

We recognize the symmetric and the anti-symmetric parts of the operator in \eqref{eq 14} and \eqref{eq 15}. 
The symmetric operator $\mathcal{S}$ is given by
\begin{align}
\mathcal{S}f & = i \partial_t f  + \D^2 f \\
& \quad + \partial_{x_1}^{\, 2} f \left[\frac{24 \alpha^2 \psi^2}{R^2} \right] + \sum_{j=2}^d \partial_{x_j}^{\, 2} f \left[\frac{24 \alpha^2 x_j^2}{R^4} \right]  \\
& \quad + \partial_{x_1} f \left[  \frac{48 \alpha^2 \psi}{R^3} \right] + \sum_{j=2}^d \partial_{x_j} f \left[ \frac{48 \alpha^2 x_j}{R^4}\right] \\
& \quad + f \left[ \frac{16\alpha^4 \psi^4}{R^4} + \sum_{j=2}^d   \frac{16 \alpha^4 x_j^4}{R^8}   + \frac{12 d \alpha^2 }{R^4}\right] .
\end{align}
We decompose $\mathcal{S}$ into
\begin{align}
\mathcal{S} f = : \mathcal{S}_t f + \mathcal{S}_{x_1} + \sum_{j=2}^d \mathcal{S}_{x_j} ,
\end{align}
where
\begin{align}
\mathcal{S}_t f & = i \partial_t f  ,\\
\mathcal{S}_{x_1} f & = \partial_{x_1}^{\, 4} f  + \partial_{x_1}^{\, 2} f \left[\frac{24 \alpha^2 \psi^2}{R^2} \right] + \partial_{x_1} f \left[  \frac{48 \alpha^2 \psi}{R^3} \right]  + f \left[ \frac{16\alpha^4 \psi^4}{R^4}  + \frac{12  \alpha^2 }{R^4}\right] ,\\
\mathcal{S}_{x_j} f & = \partial_{x_j}^{\, 4} f + \partial_{x_j}^{\, 2} f \left[\frac{24 \alpha^2 x_j^2}{R^4} \right] + \partial_{x_j} f \left[ \frac{48 \alpha^2 x_j}{R^4} \right] + f \left[  \frac{16 \alpha^4 x_j^4}{R^8}   + \frac{12  \alpha^2 }{R^4}\right], \quad j = 2, \cdots , d.
\end{align}

Then the anti-symmetric operator $\mathcal{A}$ is given by
\begin{align}
\mathcal{A}f & = f\left[- 2 i \alpha \left(\frac{x_1}{R} \varphi' +  \varphi \varphi'\right)\right]\\
& \quad + \partial_{x_1}^{\, 3} f \left[-  \frac{8 \alpha \psi}{R} \right] + \sum_{j=2}^d   \partial_{x_j}^{\, 3} f \left[- \frac{8 \alpha x_j}{R^2}\right]  \\
& \quad + \partial_{x_1}^{\, 2} f \left[- \frac{12 \alpha}{R^2}\right] + \sum_{j=2}^d \partial_{x_j}^{\, 2} f \left[ - \frac{12\alpha}{R^2}\right] \\
& \quad + \partial_{x_1} f \left[-\frac{32 \alpha^3 \psi^3}{R^3} \right] + \sum_{j=2}^d \partial_{x_j} f \left[- \frac{32 \alpha^3 x_j^3}{R^6} \right] \\
& \quad + f \left[ -\frac{48\alpha^3 \psi^2}{R^4} + \sum_{j=2}^d  ( - \frac{48 \alpha^3 x_j^2}{R^6} ) \right] .
\end{align}

We again decompose $\mathcal{A}$ into
\begin{align}
\mathcal{A}f =:  \mathcal{A}_{t}f + \mathcal{A}_{x_1} f + \sum_{j=2}^d \mathcal{A}_{x_j} f ,
\end{align}
where
\begin{align}
\mathcal{A}_t f & = f\left[- 2 i \alpha \left(\frac{x_1}{R} \varphi' +  \varphi \varphi'\right) \right] ,\\
\mathcal{A}_{x_1} f & = \partial_{x_1}^{\, 3} f \left[-  \frac{8 \alpha \psi}{R} \right] + \partial_{x_1}^{\, 2} f \left[- \frac{12 \alpha}{R^2}\right] + \partial_{x_1} f \left[-\frac{32 \alpha^3 \psi^3}{R^3} \right] + f \left[ -\frac{48\alpha^3 \psi^2}{R^4} \right] ,\\
\mathcal{A}_{x_j} f & =    \partial_{x_j}^{\, 3} f \left[- \frac{8 \alpha x_j}{R^2}\right]  + \partial_{x_j}^{\, 2} f \left[ - \frac{12\alpha}{R^2}\right]  +  \partial_{x_j} f \left[- \frac{32 \alpha^3 x_j^3}{R^6} \right] + f \left[  - \frac{48 \alpha^3 x_j^2}{R^6} \right] , \quad j = 2, \cdots , d.
\end{align}

Now we will compute the commutator $[\mathcal{S}, \mathcal{A}]$ using term by term. First notice that $[\mathcal{S}_{x_j}, \mathcal{A}_{x_j}] =0$ when $i \neq j$, and $[\mathcal{S}_t , \mathcal{A}_{x_j}] = [\mathcal{S}_{x_j} , \mathcal{A}_t] =0$, for $i \neq 1$. 
This observation implies that we only need to compute the following five cases:
\begin{enumerate}
    \item $[\mathcal{S}_t, \mathcal{A}_t]$
    \item $[\mathcal{S}_t, \mathcal{A}_{x_1}]$
    \item $[\mathcal{S}_{x_1}, \mathcal{A}_t]$
    \item $[\mathcal{S}_{x_1}, \mathcal{A}_{x_1}]$
    \item $[\mathcal{S}_{x_j}, \mathcal{A}_{x_j}]$
\end{enumerate}

For {\it Case (1)} in the list, we have
\begin{align}
\left[\mathcal{S}_t, \mathcal{A}_t \right]f & = \left[i\partial_t , - 2i \alpha \left(\frac{x_1}{R} \varphi' + \varphi \varphi'\right)\right]f = 2 \alpha \left[\partial_t  , \left(\frac{x_1}{R} \varphi' + \varphi \varphi'\right)\right]f \\
& = 2 \alpha \partial_t \left(\left(\frac{x_1}{R} \varphi' + \varphi \varphi'\right)f\right) - 2 \alpha \left(\frac{x_1}{R} \varphi' + \varphi \varphi'\right) \partial_t f \\
& = 2 \alpha \left(\frac{x_1}{R} \varphi'' + \varphi'^2 + \varphi \varphi'' \right) f .
\end{align}

Then consider {\it Case (2)}, we compute
\begin{align}
\left[i \partial_t , \frac{8 \alpha \psi}{R} \partial_{x_1}^{\, 3}\right]f & =  i \partial_t  \left(\frac{8 \alpha \psi}{R} \partial_{x_1}^{\, 3} f\right) - i \frac{8 \alpha \psi}{R} \partial_{x_1}^{\, 3} \left( \partial_t f \right)  = i \frac{8 \alpha \varphi'}{R} \partial_{x_1}^{\, 3} f , \\
\left[i \partial_t , \frac{12 \alpha}{R^2} \partial_{x_1}^{\, 2}\right]f & = 0 ,\\
\left[i \partial_t , \frac{32 \alpha^3 \psi^3}{R^3} \partial_{x_1}\right]f & =  i \partial_t  \left(\frac{32 \alpha^3 \psi^3}{R^3} \partial_{x_1} f\right) - i \frac{32 \alpha^3 \psi^3}{R^3} \partial_{x_1} \left( \partial_t f \right)  = i \frac{96 \alpha^3 \psi^2 \varphi'}{R^3} \partial_{x_1} f , \\
\left[i \partial_t , \frac{48 \alpha^3 \psi^2}{R^4}\right] f & = i \partial_t \left( \frac{48 \alpha^3 \psi^2}{R^4} f \right) - i \frac{48 \alpha^3 \psi^2}{R^4} \left( \partial_t f \right) = i \frac{96 \alpha^3 \psi \varphi'}{R^4} f .
\end{align}
Then adding them together, we have 
\begin{align}
\left[\mathcal{S}_t ,\mathcal{A}_{x_1}\right] f & = -i \frac{8 \alpha \varphi'}{R} \partial_{x_1}^{\, 3} f - i \frac{96 \alpha^3 \psi^2 \varphi'}{R^3} \partial_{x_1} f - i \frac{96 \alpha^3 \psi \varphi'}{R^4} f  .
\end{align}

For {\it Case 3}, we compute first
\begin{align}
\left[ \partial_{x_1}^{\, 4} , 2 i \alpha \left(\frac{x_1}{R} \varphi' +  \varphi \varphi'\right) \right]f & = 2i \alpha \partial_{x_1}^{\, 4} \left(\left(\frac{x_1}{R} \varphi' +  \varphi \varphi'\right)f\right) - 2 i\alpha \left(\frac{x_1}{R} \varphi' +  \varphi \varphi'\right) \partial_{x_1}^{\, 4} f = 2i \alpha \frac{4}{R} \varphi' \partial_{x_1}^{\, 3} f , \\
\left[ \frac{24 \alpha^2 \psi^2}{R^2}  \partial_{x_1}^{\, 2} , 2 i \alpha \left(\frac{x_1}{R} \varphi' +  \varphi \varphi'\right) \right]f & = 2i \alpha \frac{24 \alpha^2 \psi^2}{R^2}  \partial_{x_1}^{\, 2} \left(\left(\frac{x_1}{R} \varphi' +  \varphi \varphi'\right) f\right) - 2i \alpha \left(\frac{x_1}{R} \varphi' +  \varphi \varphi'\right) \frac{24 \alpha^2 \psi^2}{R^2}  \partial_{x_1}^{\, 2} f  \\
& = 2i \alpha \frac{24 \alpha^2 \psi^2}{R^2} \frac{2}{R}\varphi' \partial_{x_1} f ,\\
\left[ \frac{48 \alpha^2 \psi}{R^3} \partial_{x_1} , 2 i \alpha  \left(\frac{x_1}{R} \varphi' +  \varphi \varphi'\right) \right]f & = 2i \alpha \frac{48 \alpha^2 \psi}{R^3} \partial_{x_1} \left(\left(\frac{x_1}{R} \varphi' +  \varphi \varphi'\right)f\right) - 2i \alpha \left(\frac{x_1}{R} \varphi' +  \varphi \varphi'\right) \frac{48 \alpha^2 \psi}{R^3} \partial_{x_1} f \\
& = 2i \alpha \frac{48 \alpha^2 \psi}{R^3} \frac{1}{R}\varphi' f ,\\
\left[\left( \frac{16\alpha^4 \psi^4}{R^4}  + \frac{12  \alpha^2 }{R^4}\right),  2 i \alpha \left(\frac{x_1}{R} \varphi' +  \varphi \varphi'\right) \right]f & = 0  .
\end{align}
Then summing up all the terms above, we have
\begin{align}
\left[\mathcal{S}_{x_1} , \mathcal{A}_t\right] f & = - 2i \alpha \frac{4}{R} \varphi' \partial_{x_1}^{\, 3} f - 2i \alpha \frac{48 \alpha^2 \psi^2}{R^3} \varphi' \partial_{x_1} f - 2i \alpha \frac{48 \alpha^2 \psi}{R^4} \varphi'f .
\end{align}

Next for {\it Cases (4) and (5)} in the list, using \emph{Mathematica} we get
\begin{align}\label{eq 16}
\begin{aligned}
\left[\mathcal{S}_{x_1}, \mathcal{A}_{x_1}\right] f & = f \left[  - \frac{1536 \alpha^5 \psi^2  }{R^8}  + \frac{2048 \alpha^7 \psi^6 }{R^8} \right]   - \frac{6144 \alpha^5 \psi^3 \partial_{x_1} f}{R^7}  + \frac{384 \alpha^3  \partial_{x_1}^{\, 2} f}{R^6}  - \frac{1536 \alpha^5 \psi^4  \partial_{x_1}^{\, 2} f}{R^6}  \\
& \quad + \frac{1536 \psi  \partial_{x_1}^{\, 3} f}{R^5}  +  \frac{384 \alpha^3 \psi^2  \partial_{x_1}^{\, 4} f}{R^4}   - \frac{32 \alpha  \partial_{x_1}^{\, 6} f}{R^2} ,
\end{aligned}
\end{align}
and
\begin{align}\label{eq 17}
\begin{aligned}
\left[\mathcal{S}_{x_j}, \mathcal{A}_{x_j}\right] f & = f \left[ - \frac{1536 \alpha^5 x_j^2}{R^{10}} + \frac{2048 \alpha^7 x_j^6 }{R^{14}} \right]  - \frac{6144 \alpha^5 x_j^3 \partial_{x_j} f}{R^{10}}     + \frac{384 \alpha^3 \partial_{x_j}^{\, 2}f}{R^6} - \frac{1536 \alpha^5 x_j^4 \partial_{x_j}^{\, 2} f}{R^{10}}   \\
& \quad +\frac{1536 \alpha^3 x_j \partial_{x_j}^{\, 3} f}{R^6}  + \frac{384 \alpha^3 x_j^2 \partial_{x_j}^{\, 4} f }{R^6}   - \frac{32\alpha \partial_{x_j}^{\, 6} f}{R^2}.
\end{aligned}
\end{align}

Combining \eqref{eq 16} and \eqref{eq 17}, we obtain a part of the commutator $\left[\mathcal{S} , \mathcal{A}\right]$
\begin{align}
\left[\mathcal{S}_{x_1} , \mathcal{A}_{x_1} \right]f + \sum_{j=2}^d \left[\mathcal{S}_{x_j}, \mathcal{A}_{x_j}\right]f & = f \left[  - \frac{1536 \alpha^5 \psi^2  }{R^8}  + \frac{2048 \alpha^7 \psi^6 }{R^8} + \sum_{j=2}^d \left(- \frac{1536 \alpha^5 x_j^2}{R^{10}} + \frac{2048 \alpha^7 x_j^6 }{R^{14}}\right) \right] \\
& \quad  - \frac{6144 \alpha^5 \psi^3 \partial_{x_1} f}{R^7} + \sum_{j=2}^d \left(- \frac{6144 \alpha^5 x_j^3 \partial_{x_j} f}{R^{10}} \right) \\
& \quad +  \frac{384 \alpha^3  \partial_{x_1}^{\, 2} f}{R^6}  - \frac{1536 \alpha^5 \psi^4  \partial_{x_1}^{\, 2} f}{R^6} + \sum_{j=2}^d \left(\frac{384 \alpha^3 \partial_{x_j}^{\, 2} f}{R^6} - \frac{1536 \alpha^5 x_j^4 \partial_{x_j}^{\, 2}  f}{R^{10}} \right)\\
& \quad + \frac{1536 \psi  \partial_{x_1}^{\, 3} f}{R^5} + \sum_{j=2}^d \frac{1536 \alpha^3 x_j \partial_{x_j}^{\, 3} f}{R^6}  \\
& \quad + \frac{384 \alpha^3 \psi^2  \partial_{x_1}^{\, 4} f}{R^4} + \sum_{j=2}^d \frac{384 \alpha^3 x_j^2 \partial_{x_j}^{\, 4} f }{R^6}  \\
& \quad  - \frac{32 \alpha  \partial_{x_1}^{\, 6} f}{R^2} + \sum_{j=2}^d \left(- \frac{32\alpha \partial_{x_j}^{\, 6} f}{R^2} \right) .
\end{align}

Together with all the five cases in the list, we arrive at
\begin{align}
\left[\mathcal{S},\mathcal{A}\right] f & = \left[\mathcal{S}_t , \mathcal{A}_t\right]f + \left[\mathcal{S}_t, \mathcal{A}_{x_1}\right]f + \left[\mathcal{S}_{x_1} ,\mathcal{A}_t\right]f + \left[\mathcal{S}_{x_1} , \mathcal{A}_{x_1} \right]f + \sum_{j=2}^d \left[\mathcal{S}_{x_j}, \mathcal{A}_{x_j}\right]f\\
& = 2 \alpha \left(\frac{x_1}{R} \varphi'' + \varphi'^2 + \varphi \varphi'' \right) f  -i \frac{16 \alpha \varphi'}{R} \partial_{x_1}^{\, 3} f - i \frac{192 \alpha^3 \psi^2 \varphi'}{R^3} \partial_{x_1} f - i \frac{192 \alpha^3 \psi \varphi'}{R^4} f \\
& \quad + f \left[  - \frac{1536 \alpha^5 \psi^2  }{R^8}  + \frac{2048 \alpha^7 \psi^6 }{R^8} + \sum_{j=2}^d \left(- \frac{1536 \alpha^5 x_j^2}{R^{10}} + \frac{2048 \alpha^7 x_j^6 }{R^{14}}\right) \right] \\
& \quad + - \frac{6144 \alpha^5 \psi^3 \partial_{x_1} f}{R^7} + \sum_{j=2}^d \left(- \frac{6144 \alpha^5 x_j^3 \partial_{x_j} f}{R^{10}} \right)\\
& \quad + \frac{384 \alpha^3  \partial_{x_1}^{\, 2} f}{R^6}  - \frac{1536 \alpha^5 \psi^4  \partial_{x_1}^{\, 2} f}{R^6} + \sum_{j=2}^d \left(\frac{384 \alpha^3 \partial_{x_j}^{\, 2} f}{R^6} - \frac{1536 \alpha^5 x_j^4 \partial_{x_j}^{\, 2}  f}{R^{10}} \right) \\
& \quad + \frac{1536 \psi  \partial_{x_1}^{\, 3} f}{R^5} + \sum_{j=2}^d \frac{1536 \alpha^3 x_j \partial_{x_j}^{\, 3} f}{R^6}  \\
& \quad +  \frac{384 \alpha^3 \psi^2  \partial_{x_1}^{\, 4} f}{R^4} + \sum_{j=2}^d \frac{384 \alpha^3 x_j^2 \partial_{x_j}^{\, 4} f }{R^6}  \\
& \quad  - \frac{32 \alpha  \partial_{x_1}^{\, 6} f}{R^2} + \sum_{j=2}^d \left(- \frac{32\alpha \partial_{x_j}^{\, 6} f}{R^2} \right).
\end{align}

Next we compute the inner product $\inner{f, \left[\mathcal{S},\mathcal{A}\right]f}$ and our aim is to find an lower bound for it.
\begin{align}
& \inner{f, \left[\mathcal{S},\mathcal{A}\right]f}_{L_{t,x}^2 \times L_{t,x}^2} \label{eq SA}\\
& = \iint \bar{f} \left[2 \alpha \left(\frac{x_1}{R} \varphi'' + \varphi'^2 + \varphi \varphi'' \right) f  -i \frac{16 \alpha \varphi'}{R} \partial_{x_1}^{\, 3} f - i \frac{192 \alpha^3 \psi^2 \varphi'}{R^3} \partial_{x_1} f - i \frac{192 \alpha^3 \psi \varphi'}{R^4} f \right] \, dx \, dt \label{eq SA0}\\
& \quad + \iint \abs{f}^2 \left[  - \frac{1536 \alpha^5 \psi^2  }{R^8}  + \frac{2048 \alpha^7 \psi^6 }{R^8} + \sum_{j=2}^d \left(- \frac{1536 \alpha^5 x_j^2}{R^{10}} + \frac{2048 \alpha^7 x_j^6 }{R^{14}}\right) \right] \, dx \, dt \label{eq SA1}\\
& \quad + \iint \bar{f} \partial_{x_1} f \left[ - \frac{6144 \alpha^5 \psi^3}{R^7} \right]+ \sum_{j=2}^d \bar{f}\partial_{x_j} f \left[- \frac{6144 \alpha^5 x_j^3 }{R^{10}} \right] \, dx \, dt \label{eq SA2}\\
& \quad + \iint \bar{f}\partial_{x_1}^{\, 2} f \left[  \frac{384 \alpha^3  }{R^6}  - \frac{1536 \alpha^5 \psi^4 }{R^6}\right] + \sum_{j=2}^d  \bar{f}\partial_{x_j}^{\, 2} f \left[\frac{384 \alpha^3 }{R^6} - \frac{1536 \alpha^5 x_j^4 }{R^{10}}\right] \, dx \, dt \label{eq SA3}\\
& \quad + \iint \bar{f}\partial_{x_1}^{\, 3} f\left[ \frac{1536 \psi  }{R^5}\right] + \sum_{j=2}^d \bar{f} \partial_{x_j}^{\, 3} f \left[\frac{1536 \alpha^3 x_j }{R^6} \right] \, dx \, dt \label{eq SA4}\\
& \quad + \iint \bar{f} \partial_{x_1}^{\, 4} f \left[ \frac{384 \alpha^3 \psi^2 }{R^4}\right] + \sum_{j=2}^d \bar{f}\partial_{x_j}^{\, 4} f \left[ \frac{384 \alpha^3 x_j^2 }{R^6} \right] \, dx \, dt \label{eq SA5}\\
& \quad + \iint \bar{f} \partial_{x_1}^{\, 6} f \left[- \frac{32 \alpha }{R^2}\right] + \sum_{j=2}^d \bar{f} \partial_{x_j}^{\, 6} f \left[- \frac{32\alpha }{R^2} \right] \, dx \, dt .\label{eq SA6}
\end{align}

To this end, we need to preform a few integration by parts term by term.

\noindent {\it \underline{Term \eqref{eq SA0}.}}
First take the last term in \eqref{eq SA0}. 
An integration by parts yields 
\begin{align}
\iint - i \frac{192 \alpha^3  \psi^2  \varphi'}{R^3}  \bar{f} \partial_{x_1} f \, dx \, dt = \iint i \frac{192 \alpha^3  \psi^2 \varphi'}{R^3}  \partial_{x_1} \overline{f} f \, dx \, dt  + \iint i \frac{192 \alpha^3 \psi \varphi'}{R^4}  2  \abs{f}^2 \, dx \, dt ,
\end{align}
which implies
\begin{align}
\iint i \frac{192 \alpha^3 \psi \varphi'}{R^4}  \abs{f}^2 \, dx \, dt = - \iint i\frac{192 \alpha^3 \psi^2 \varphi'}{R^3}   \re \left(\bar{f} \partial_{x_1} f\right) \, dx \, dt.
\end{align}
Hence the last two terms in \eqref{eq SA0} is given by
\begin{align}
& \quad \iint - i \frac{192 \alpha^3  \psi^2 \varphi'}{R^3}  \bar{f} \partial_{x_1} f - i \frac{192 \alpha^3 \psi \varphi' }{R^4}   \abs{f}^2 \, dx \, dt \\
& = \iint - i \frac{192 \alpha^3\psi^2 \varphi'  }{R^3}  \bar{f} \partial_{x_1} f \, dx \, dt + \iint  i \frac{192 \alpha^3 \psi^2 \varphi'}{R^3}  \re \left(\bar{f} \partial_{x_1} f\right) \, dx \, dt\\
& = \iint \frac{192 \alpha^3 \psi^2 \varphi'}{R^3}   \im \left(\bar{f} \partial_{x_1} f\right) \, dx \, dt .
\end{align}
The second term in \eqref{eq SA0} can be written as 
\begin{align}
\iint -i \frac{16 \alpha \varphi'}{R} \bar{f} \partial_{x_1}^{\, 3} f \, dx \, dt & = \iint i \frac{16 \alpha \varphi'}{R} \partial_{x_1}\bar{f} \partial_{x_1}^{\, 2} f \, dx \, dt = \iint - i \frac{16 \alpha \varphi'}{R} \partial_{x_1}^{\, 2} \bar{f} \partial_{x_1} f \, dx \, dt
\end{align}
which implies
\begin{align}
\iint -i \frac{16 \alpha \varphi'}{R} \bar{f} \partial_{x_1}^{\, 3} f \, dx \, dt =  \iint - \frac{16 \alpha \varphi'}{R} \im \left(\partial_{x_1}\bar{f} \partial_{x_1}^{\, 2} f \right) \, dx \, dt .
\end{align}

Therefore, 
\begin{align}
\eqref{eq SA0} & = \iint \abs{f}^2 \left[2 \alpha \left(\left(\frac{x_1}{R} + \varphi\right)\varphi'' +  \varphi'^2 \right) \right] - \frac{16 \alpha \varphi'}{R} \im \left(\partial_{x_1}\bar{f} \partial_{x_1}^{\, 2} f\right) + \frac{192 \alpha^3 \psi^2 \varphi'}{R^3} \im \left(\bar{f} \partial_{x_1}f\right)  \, dx \, dt .
\end{align}

\noindent {\it \underline{Term \eqref{eq SA1}.}}
Rewriting it in the following form
\begin{align}
\eqref{eq SA1} & = \iint \abs{f}^2 \left[  - \frac{1536 \alpha^5 \psi^2  }{R^8}  + \frac{2048 \alpha^7 \psi^6 }{R^8} + \sum_{j=2}^d \left(- \frac{1536 \alpha^5 x_j^2}{R^{10}} + \frac{2048 \alpha^7 x_j^6 }{R^{14}}\right) \right]  \, dx \, dt\\
& = \iint \abs{f}^2 \left[- \frac{1536 \alpha^5 }{R^8} \Phi^2 + \frac{2048 \alpha^7 }{R^8} \left(\psi^6 + \sum_{j=2}^d \left(\frac{x_j}{R}\right)^6\right) \right]  \, dx \, dt .
\end{align}

\noindent {\it \underline{Terms \eqref{eq SA2} and \eqref{eq SA3}.}}
Again integrating by parts yields
\begin{align}
\eqref{eq SA3} & = \iint \bar{f}\partial_{x_1}^{\, 2} f \left[  \frac{384 \alpha^3  }{R^6}  - \frac{1536 \alpha^5 \psi^4 }{R^6}\right] + \sum_{j=2}^d  \bar{f}\partial_{x_j}^{\, 2} f \left[\frac{384 \alpha^3 }{R^6} - \frac{1536 \alpha^5 x_j^4 }{R^{10}}\right]  \, dx \, dt\\
& = \iint \abs{\partial_{x_1}f}^2 \left[ - \frac{384 \alpha^3  }{R^6}  + \frac{1536 \alpha^5 \psi^4 }{R^6}\right] + \sum_{j=2}^d \abs{\partial_{x_j}f}^2 \left[ - \frac{384 \alpha^3 }{R^6} + \frac{1536 \alpha^5 x_j^4 }{R^{10}}\right]  \, dx \, dt\\
& \quad + \iint  \bar{f}\partial_{x_1}f \left[\frac{6144 \alpha^5 \psi^3}{R^7}\right] + \sum_{j=2}^d  \bar{f} \partial_{x_j} f \left[\frac{6144 \alpha^5 x_j^3}{R^{10}}\right]  \, dx \, dt . \label{eq 19}
\end{align}

Noticing that  the last two terms  in \eqref{eq 19} is the opposite of \eqref{eq SA2}, hence 
\begin{align}
\eqref{eq SA2} + \eqref{eq SA3} & = \iint  \abs{\partial_{x_1}f}^2 \left[ - \frac{384 \alpha^3  }{R^6}  + \frac{1536 \alpha^5 \psi^4 }{R^6}\right] + \sum_{j=2}^d \abs{\partial_{x_j}f}^2 \left[ - \frac{384 \alpha^3 }{R^6} + \frac{1536 \alpha^5 x_j^4 }{R^{10}}\right]  \, dx \, dt .
\end{align}

\noindent {\it \underline{Terms \eqref{eq SA4} and \eqref{eq SA5}.}}
Preforming integration by parts again, we write  
\begin{align}
\text{Term 1 in }\eqref{eq SA5} & = \iint  \bar{f} \partial_{x_1}^{\, 4} f \frac{384 \alpha^3 \psi^2  }{R^4}  \, dx \, dt\\
& = \iint - \partial_{v} \bar{f} \partial_{x_1}^{\, 3}f \frac{384 \alpha^3 \psi^2  }{R^4} - \bar{f} \partial_{x_1}^{\, 3} f \frac{768 \alpha^3 \psi  }{R^5}  \, dx \, dt\\
& = \iint \abs{\partial_{x_1}^{\, 2} f}^2 \frac{384 \alpha^3 \psi^2  }{R^4} + \partial_{x_1} \bar{f} \partial_{x_1}^{\, 2}f \frac{768 \alpha^3 \psi  }{R^5} - \bar{f} \partial_{x_1}^{\, 3} f \frac{768 \alpha^3 \psi  }{R^5}  \, dx \, dt\\
& = \iint \abs{\partial_{x_1}^{\, 2} f}^2 \frac{384 \alpha^3 \psi^2  }{R^4} - \bar{f} \partial_{x_1}^{\, 3} f \frac{768 \alpha^3 \psi  }{R^5} - \bar{f} \partial_{x_1}^{\, 2} f \frac{768 \alpha^3  }{R^6}  - \bar{f} \partial_{x_1}^{\, 3} f \frac{768 \alpha^3 \psi  }{R^5}  \, dx \, dt\\
& = \iint \abs{\partial_{x_1}^{\, 2} f}^2 \frac{384 \alpha^3 \psi^2  }{R^4} - \bar{f} \partial_{x_1}^{\, 3} f \frac{1536 \alpha^3 \psi  }{R^5} + \abs{\partial_{x_1} f}^2  \frac{768 \alpha^3  }{R^6}   \, dx \, dt , \label{eq 20}
\end{align}
and
\begin{align}
\text{Term 2 in }\eqref{eq SA5} & = \iint \bar{f} \partial_{x_j}^{\, 4} f \frac{384 \alpha^3 x_j^2  }{R^6}  \, dx \, dt \\
& = \iint - \partial_{x_j} \bar{f} \partial_{x_j}^{\, 3} f \frac{384 \alpha^3 x_j^2  }{R^6} - \bar{f} \partial_{x_j}^{\, 3} f \frac{768\alpha^3 x_j}{R^6}  \, dx \, dt\\
& = \iint \abs{\partial_{x_j}^{\, 2} f}^2 \frac{384 \alpha^3 x_j^2  }{R^6} + \partial_{x_j} \bar{f} \partial_{x_j}^{\, 2} f \frac{768\alpha^3 x_j}{R^6} - \bar{f} \partial_{x_j}^{\, 3} f \frac{768\alpha^3 x_j}{R^6}  \, dx \, dt\\
& = \iint \abs{\partial_{x_j}^{\, 2} f}^2 \frac{384 \alpha^3 x_j^2  }{R^6} - \bar{f} \partial_{x_j}^{\, 3} f \frac{768\alpha^3 x_j}{R^6} - \bar{f} \partial_{x_j}^{\, 2} \frac{768\alpha^3 }{R^6} - \bar{f} \partial_{x_j}^{\, 3} f \frac{768\alpha^3 x_j}{R^6}  \, dx \, dt\\
& = \iint \abs{\partial_{x_j}^{\, 2} f}^2 \frac{384 \alpha^3 x_j^2  }{R^6} - \bar{f} \partial_{x_j}^{\, 3} f \frac{1536\alpha^3 x_j}{R^6} + \abs{\partial_{x_j} f}^2  \frac{768\alpha^3 }{R^6}   \, dx \, dt . \label{eq 21}
\end{align}

Noticing that the second terms in \eqref{eq 20} and \eqref{eq 21} show up in \eqref{eq SA4}   with  the opposite sign, we have
\begin{align}
\eqref{eq SA4} + \eqref{eq SA5}  & = \iint \abs{\partial_{x_1}^{\, 2} f}^2 \frac{384 \alpha^3 \psi^2  }{R^4} + \abs{\partial_{x_1} f}^2  \frac{768 \alpha^3  }{R^6} + \sum_{j=2}^d \abs{\partial_{x_j}^{\, 2} f}^2 \frac{384 \alpha^3 x_j^2  }{R^6} + \sum_{j=2}^d \abs{\partial_{x_j} f}^2  \frac{768\alpha^3 }{R^6}  \, dx \, dt .
\end{align}

\noindent {\it \underline{Term \eqref{eq SA6}.}}
Finally, we write 
\begin{align}
\eqref{eq SA6} & = \iint \bar{f}  \partial_{x_1}^{\, 6} f \left[ - \frac{32 \alpha }{R^2}\right] + \sum_{j=2}^d \bar{f}\partial_{x_j}^{\, 6} f \left[- \frac{32\alpha }{R^2}\right]   \, dx \, dt = \iint  \abs{\partial_{x_1}^{\, 3} f}^2 \frac{32\alpha }{R^2} + \sum_{j=2}^d  \abs{\partial_{x_j}^{\, 3} f}^2 \frac{32\alpha }{R^2}  \, dx \, dt .
\end{align}

Therefore, summarizing Terms \eqref{eq SA0} - \eqref{eq SA6} we conclude that
\begin{align}
\inner{f , \left[\mathcal{S}, \mathcal{A}\right]f}_{L_{t,x}^2 \times L_{t,x}^2} & = \iint \abs{f}^2 \left[2 \alpha \left(\left(\frac{x_1}{R} + \varphi\right)\varphi'' +  \varphi'^2 \right) \right] - \frac{16 \alpha \varphi'}{R} \im \left(\partial_{x_1}\bar{f} \partial_{x_1}^{\, 2} f\right) + \frac{192 \alpha^3 \psi^2 \varphi'}{R^3} \im \left(\bar{f} \partial_{x_1}f\right)  \, dx \, dt\\
& \quad + \iint \abs{f}^2 \left[- \frac{1536 \alpha^5 }{R^8} \Phi^2 + \frac{2048 \alpha^7 }{R^8} \left(\psi^6 + \sum_{j=2}^d \left(\frac{x_j}{R}\right)^6\right) \right]  \, dx \, dt\\
& \quad + \iint \abs{\partial_{x_1}f}^2 \left[ \frac{384 \alpha^3  }{R^6}  + \frac{1536 \alpha^5 \psi^4 }{R^6} \right] + \sum_{j=2}^d \abs{\partial_{x_j}f}^2 \left[ \frac{384 \alpha^3 }{R^6} + \frac{1536 \alpha^5 x_j^4 }{R^{10}} \right]  \, dx \, dt\\ 
& \quad + \iint \abs{\partial_{x_1}^{\, 2} f}^2 \frac{384 \alpha^3 \psi^2  }{R^4} + \sum_{j=2}^d \abs{\partial_{x_j}^{\, 2} f}^2 \frac{384 \alpha^3 x_j^2  }{R^6}  \, dx \, dt\\
& \quad + \iint  \abs{\partial_{x_1}^{\, 3} f}^2 \frac{32\alpha }{R^2} + \sum_{j=2}^d  \abs{\partial_{x_j}^{\, 3} f}^2 \frac{32\alpha }{R^2}  \, dx \, dt .
\end{align}

To estimates the mixed terms above, we employ Cauchy–Schwarz inequality to control
\begin{align}\label{eq 7}
\begin{aligned}
& \abs{\im \left(\partial_{x_1} \overline{f} \partial_{x_1}^{\,2} f\right)} \leq  \omega \abs{\partial_{x_1} f}^2 + \frac{1}{\omega} \abs{\partial_{x_1}^{\, 2} f}^2 , \\
& \abs{\im \left(\overline{f} \partial_{x_1} f\right)} \leq \rho  \abs{f}^2  +  \frac{1}{\rho} \abs{\partial_{x_1} f}^2  ,
\end{aligned}
\end{align}
where $\omega ,  \rho$ are arbitrary positive  constants, and will be chosen later.

Now we obtain a lower bound of $\inner{f , \left[\mathcal{S}, \mathcal{A}\right] f}$
\begin{align}
&  \inner{f , \left[\mathcal{S}, \mathcal{A}\right] f}_{L_{t,x}^2 \times L_{t,x}^2} \\
& \quad \geq  \iint \abs{f}^2 \left[- \frac{1536 \alpha^5 }{R^8} \Phi^2 + \frac{2048 \alpha^7 }{R^8} \left(\psi^6 + \sum_{j=2}^d \left(\frac{x_j}{R}\right)^6\right) + 2 \alpha \left(\left(\frac{x_1}{R} + \varphi\right)\varphi'' +  \varphi'^2 \right) - \rho \frac{192 \alpha^3 \psi^2 \varphi'}{R^3}\right]  \, dx \, dt \label{eq In0}\\
& \qquad + \iint \abs{\partial_{x_1}f}^2 \left[ \frac{384 \alpha^3  }{R^6}  + \frac{1536 \alpha^5 \psi^4 }{R^6}  - \omega \frac{16 \alpha \varphi'}{R} - \frac{192 \alpha^3 \psi^2 \varphi'}{\rho R^3} \right] + \sum_{j=2}^d \abs{\partial_{x_j}f}^2 \left[ \frac{384 \alpha^3 }{R^6} + \frac{1536 \alpha^5 x_j^4 }{R^{10}} \right]  \, dx \, dt \label{eq In1}\\ 
& \qquad + \iint \abs{\partial_{x_1}^{\, 2} f}^2 \left[\frac{384 \alpha^3 \psi^2  }{R^4} -  \frac{16 \alpha \varphi'}{ \omega R} \right]+ \sum_{j=2}^d \abs{\partial_{x_j}^{\, 2} f}^2 \frac{384 \alpha^3 x_j^2  }{R^6}  \, dx \, dt \label{eq In2}\\
& \qquad + \iint  \abs{\partial_{x_1}^{\, 3} f}^2 \frac{32\alpha }{R^2} + \sum_{j=2}^d  \abs{\partial_{x_j}^{\, 3} f}^2 \frac{32\alpha }{R^2}  \, dx \, dt . \label{eq In3}
\end{align}
By choosing $\rho \sim R^{\frac{1}{3}}$, $\omega \sim R^{\frac{1}{3}}$ (with suitable constants and $\alpha = c R^{\frac{4}{3}}$ (where $c = c\left(d, \norm{\varphi'}_{L^{\infty}} , \norm{\varphi''}_{L^{\infty}}\right)$), we can make the first term and last two terms \eqref{eq In0} absorbed by the second term in \eqref{eq In0}; and hide the terms with negative signs in \eqref{eq In1} and \eqref{eq In2} by the first positive terms in  \eqref{eq In1} and \eqref{eq In2} respectively. Since the two terms in \eqref{eq In3} are both non-negative, we then finally obtain 
\begin{align}
\inner{f, \left[\mathcal{S}, \mathcal{A}\right] f }_{L_{t,x}^2 \times L_{t,x}^2} \geq  c  \frac{\alpha^7 }{R^8}\norm{f}_{L_{t,x}^2}^2 .
\end{align}

Recall $f = e^{\alpha\Phi^2} u$ and $\Phi^2  \left(t,x\right)=  \left(\frac{x_1}{R}+ \varphi \left(t\right)\right)^2 +  \sum_{j=2}^d \left(\frac{x_j}{R}\right)^2$, then we have
\begin{align}
\norm{e^{\alpha\Phi^2} \left(i\partial_t + \D^2\right) u}_{L_{t,x}^2}^2 & \geq \iint \bar{f} \left[\mathcal{S}, \mathcal{A}\right] f \, dx \, dt  \geq c  \frac{\alpha^7 }{R^8}\norm{f}_{L_{t,x}^2}^2 = c \frac{\alpha^7}{R^8}  \norm{e^{\alpha\Phi^2} u}_{L_{t,x}^2}^2,
\end{align}
which finishes the proof of Lemma \ref{lem Carleman}.
\end{proof}

\begin{rmk}
We can only prove the estimate for the operator $i\partial_t +\D^2$ because it is not obvious how to obtain the desired inequality for the operator $i\partial_t +\Delta^2$ containing mixed terms since the latter resists being lower-bounded in a positive fashion. We believe that this inequality is highly delicate and nontrivial because it is far from clear at first sight that it would follow using the same methods that proved a previous Carleman inequality for the classical Schr\"odinger operator $i\partial_t +\Delta$ as well as a result of the very large number of terms involved in the calculations, which might not be expected to sum together to produce a positive lower bound with the condition $\alpha\ge cR^{4/3}$ (which gives us the sharpest possible unique continuation theorem). In fact, the proof is quite delicate and it relies on a series of very careful estimates that if not done correctly will give a weaker inequality that is only valid for a smaller range of $\alpha$. 
\end{rmk}

\section{Lower Bound Estimates}\label{sec Lower}
In this section, we prove a lower bound for solutions with {\it fast decay}, which will be used in the next section to prove the main theorem by way of a contradiction argument.
\begin{lem}[Lower bounds]\label{lem Lower bound}
Let $u\in C^1([0,1] : H^3(\mathbb{R}^d))$ solve \eqref{eq 4SE} and let $B_R :=\{x\in\mathbb{R}^d:|x|\le R\}$.

If
\begin{align}
\int_{1/2-1/8}^{1/2+1/8}\int_{B_1}|u(t,x)|^2 \, dx \, dt & \ge 1 , \label{eq B1}\\
\int_0^1 \int_{\R^d} \sum_{j=1}^d |u|^2+|\partial_{x_j} u|^2 + |\partial_{x_j}^{\, 2} u|^2 + |\partial_{x_j}^{\, 3} u|^2\,dx  \, dt & \leq A^2 \label{eq A} \\
\norm{V}_{L_{t,x}^{\infty} ([0,1] \times \R^d)} &  \leq L \label{eq L}
\end{align}
for some $A, L > 0$.

Then there exists $R_0 = R_0 (d, A, L) >0$ and  $c= c(d)$ such that 
\begin{align}\label{eq Gamma}
\gamma(R):=\left(\int_0^1 \int_{R-1<|x|<R} \sum_{j=1}^d |u|^2+|\partial_{x_j} u|^2 + |\partial_{x_j}^{\, 2} u|^2 + |\partial_{x_j}^{\, 3} u|^2\,dx  \, dt\right)^{\frac{1}{2}} \ge  c R^{\frac{2}{3}} e^{-c R^{\frac{4}{3}}}, 
\end{align}
for all $R > R_0$. 
\end{lem}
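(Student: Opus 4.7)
The plan is to follow the classical EKPV strategy and deduce the lower bound from the Carleman estimate of Lemma \ref{lem Carleman} applied to a carefully chosen localization of $u$. I would first fix smooth cutoffs: a time cutoff $\mu(t)$ equal to $1$ on an interval containing $[3/8,5/8]$ and vanishing near $t=0,1$; a spatial cutoff $\theta_R(x)$ equal to $1$ for $|x|\leq R-1$ and $0$ for $|x|\geq R$; and an inner cutoff $\chi(t,x) = \tilde\chi(x_1/R + \varphi(t))$, where $\tilde\chi\in C^\infty(\R)$ vanishes on $[-1,1]$ and equals $1$ on $\{|\cdot|\geq 2\}$. The function $\varphi:[0,1]\to\R$ would be chosen with $\varphi\equiv 3$ on $[3/8,5/8]$ and $\varphi\equiv 0$ in small neighborhoods of $t=0,1$, fixing the constant in Lemma \ref{lem Carleman} and allowing the choice $\alpha := cR^{4/3}$ for $R$ sufficiently large.

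Setting $g := \mu\theta_R\chi u$, the design ensures $\supp g \subset \{|x_1/R + \varphi(t)|\geq 1\}$, so Lemma \ref{lem Carleman} applies. Rewriting
\begin{align}
(i\partial_t + \D^2)g = \mu\theta_R\chi\, V u + [i\partial_t+\D^2,\, \mu\theta_R\chi]u ,
\end{align}
the $V$-contribution is absorbed into the right-hand side of the Carleman inequality once $\alpha^7/R^8 \gtrsim L^2$, which holds for $R$ large since $\alpha = cR^{4/3}$. The commutator is supported in three ``boundary'' regions: the outer annulus $\{R-1<|x|<R\}$ from $\nabla\theta_R$; the inner transition $\{1\leq|x_1/R+\varphi(t)|\leq 2\}$ from derivatives of $\chi$; and the time-transition set $\{\mu'\neq 0\}$. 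A useful observation is that on $\{\mu'\neq 0\}$ one has $\varphi=0$ while $|x_1|\leq R$ forces $|x_1/R|\leq 1$, so $\chi\equiv 0$ there and the $\mu'$-terms vanish identically.

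For the two remaining regions I would expand $[\D^2,\theta_R]$ and $[\D^2,\chi]$ via the Leibniz rule, producing sums of products of $L^\infty$-bounded derivatives of $\theta_R$ or $\chi$ with derivatives of $u$ up to order three. Pulling out the $L^\infty$-norm of the weight $e^{\alpha\Phi^2}$ on each region -- at most $e^{17\alpha}$ on the outer annulus (since $\Phi^2\leq(1+|\varphi|)^2+1\leq 17$ there) and at most $e^{5\alpha}$ on the inner transition region (since $\Phi^2\leq 5$) -- and using \eqref{eq A} together with the definition \eqref{eq Gamma} of $\gamma(R)$, these contributions are bounded by $Ce^{34\alpha}\gamma(R)^2$ and $Ce^{10\alpha}A^2$, respectively. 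For the lower bound on the right-hand side of Carleman I would restrict to the mass region $B_1\times[3/8,5/8]$, where $g = u$ and $\Phi^2\geq(3-1/R)^2\geq 8$ for $R$ large; by \eqref{eq B1} this gives $\norm{e^{\alpha\Phi^2}g}_{L^2}^2\geq c\,e^{16\alpha}$.

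Combining these ingredients yields
\begin{align}
\frac{c\alpha^7}{R^8}\, e^{16\alpha} \leq C e^{34\alpha}\gamma(R)^2 + C e^{10\alpha} A^2 ,
\end{align}
and for $\alpha=cR^{4/3}$ with $c$ large enough the $A^2$ term is negligible compared to the left-hand side (since $e^{6\alpha}\gg A^2$), giving $\gamma(R)^2 \gtrsim \alpha^7 R^{-8} e^{-18\alpha} \sim R^{4/3} e^{-c'R^{4/3}}$, which is the required bound. The main technical obstacle I anticipate is the bookkeeping in the expansion of $[\D^2,\mu\theta_R\chi]$ -- which produces many Leibniz terms each of which must be absorbed individually into $\gamma(R)$ or $A$ with the correct weight exponent -- together with the delicate tuning of $\varphi$ so that the outer-annulus weight exceeds the mass-region weight by only a universal constant, so that the final exponential rate is $e^{-cR^{4/3}}$ and not larger.
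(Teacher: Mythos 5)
Your proposal is correct and follows essentially the same route as the paper's proof: localize $u$ by $\theta_R(x)\,\eta\bigl(\tfrac{x_1}{R}+\varphi(t)\bigr)$, apply Lemma \ref{lem Carleman} with $\alpha\sim R^{4/3}$, bound the commutator terms by $\gamma(R)$ on the outer annulus and by $A$ on the inner transition set, and absorb the $A$-term using the strictly larger weight on the mass region $B_1\times[\tfrac12-\tfrac18,\tfrac12+\tfrac18]$. The only differences are cosmetic: your time cutoff $\mu$ is redundant (the paper's localized function already vanishes near $t=0,1$ for exactly the reason you note), and your dimension-independent choice $\varphi\equiv 3$ works because you bound $\sum_{j\ge2}(x_j/R)^2\le 1$ on $\supp\theta_R$, whereas the paper uses the cruder bound $d-1$ and correspondingly takes the plateau value $M$ with $(M-1)^2\ge 3+d$.
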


\begin{proof}[Proof of Lemma \ref{lem Lower bound}]
Let us start with introducing some cutoff functions
\begin{itemize}
\item 
Let $\eta (x) \in C^\infty (\R)$ be  non-decreasing, radial and such that 
\begin{align}
\eta (x) = 
\begin{cases}
0, & \text{if } \abs{x} \leq 1, \\
1, & \text{if } \abs{x} \geq 2. 
\end{cases}
\end{align}
We also define $\eta_R(x) = \eta (\frac{x}{R})$.

\item
Let $\theta_R(x)\in C^\infty (\R^d)$ be non-decreasing, radial and such that 
\begin{align}
\theta_R (x) = 
\begin{cases}
1, & \text{if } \abs{x} \leq R-1, \\
0, & \text{if } \abs{x} \geq R. 
\end{cases}
\end{align}
We also define $\varphi_R(x) = 1- \theta_R(x)$.

\item
Let  $\varphi \in C^\infty $ satisfy $\varphi(t) \in [0,M]$ on $[0,1]$ and 
\begin{align}
\varphi (t) = 
\begin{cases}
M , & \text{on } [\frac{1}{2}-\frac{1}{8},\frac{1}{2}+\frac{1}{8}], \\
0, & \text{on } [0,\frac{1}{4}]\cup [\frac{3}{4}, 1]. 
\end{cases}
\end{align}
\end{itemize}

We now apply Lemma \ref{lem Carleman} to the function
\begin{align}
f(t,x) & = \sigma(t,x) u(t,x),  
\end{align}
where 
\begin{align}
\sigma(t,x) = \theta_R(x)\eta\left( \frac{x_1}{R} + \varphi(t)   \right) 
\end{align}

Let $\Phi (t,x) : = \abs{\frac{x}{R}+\varphi(t) \vec{e}_1}$, where $\vec{e}_1$ is the unit vector $(1,0, \cdots ,0)$. 
We see that $f$ is compactly supported on $ [0,1] \times \R^d$ and satisfies the hypothesis of Lemma \ref{lem Carleman}. In fact, we notice that $f =u$ on $[\frac{1}{2} - \frac{1}{8}, \frac{1}{2} + \frac{1}{8}] \times B_{R-1}$, and  $\Phi (t,x)  \geq \abs{\frac{x_1}{R} + \varphi(t) }  \geq M-1 $.

Using our hypothesis \eqref{eq B1}
we see that
\begin{align}\label{eq Lower}
\norm{e^{\alpha \Phi^2}f}_{L_{t,x}^2} \ge e^{(M-1)^2\alpha}\norm{u}_{L_{t,x}^2( [\frac{1}{2}-\frac{1}{8},\frac{1}{2}+\frac{1}{8}] \times B_1)}\ge e^{(M-1)^2\alpha}.
\end{align}

By the chain rule, we write
\begin{align}\label{eq 9}
\begin{aligned}
&  \quad  (i\partial_t+ \D^2 -V)f(t,x)\\ 
& =  \sum_{k=0}^3 \parenthese{ \eta \sum_{j=2}^d  C_k \partial_{x_j}^{\, 3-k} \theta_R (x)  +  \sum_{l=1}^{3-k} D_k \partial_{x_1}^{\, l} \eta \partial_{x_1}^{\, 3-k-l} \theta_R (x) } \partial_{x_j}^{\, k} u + i \theta_R \varphi'  \theta_R\partial_{x_1} \eta (\frac{x_1}{R} + \varphi(t)) u.
\end{aligned}
\end{align}

There are two types of terms in the decomposition of \eqref{eq 9}. 
\begin{itemize}
\item 
$\eta \times \partial \theta$ type: the support of such type is $[0,1] \times B_R \setminus B_{R-1} $ and $1 \leq \abs{\frac{x_1}{R} + \varphi(t) } \leq M+1$, hence we know $\Phi^2 \in [1, (M+1)^2 + (d-1)]$.

\item
$\partial \eta \times \theta_R $ or $\partial \eta \times \partial \theta_R $ type: the support of this type is $ [0,1] \times B_R   $ and $1 \leq \abs{\frac{x_1}{R} + \varphi (t)} \leq 2$, hence $\Phi^2 \in [1, 4 + (d-1) ]$.
\end{itemize}

Combining Lemma \ref{lem Carleman}  with the computation of $(i\partial_t+ \D^2 )f(t,x)$ in \eqref{eq 9}, we see that
\begin{align}
c^{\frac{1}{2}}\frac{\alpha^{\frac{7}{2}}}{ R^4} \norm{e^{\alpha \Phi^2}f}_{L_{t,x}^2} &\le \norm{e^{\alpha \Phi^2} (i\partial_t+\D^2)f}_{L_{t,x}^2} \leq \norm{e^{\alpha \Phi^2} (i\partial_t+\D^2 -V)f}_{L_{t,x}^2} + \norm{e^{\alpha \Phi^2} V f}_{L_{t,x}^2}   \\ 
& \leq \norm{e^{\alpha \Phi^2} \text{RHS of \eqref{eq 9}}}_{L_{t,x}^2} + L \norm{e^{\alpha \Phi^2} f}_{L_{t,x}^2} \\
& \leq e^{((M+1)^2+(d-1)) \alpha} \gamma(R) + e^{(3+d) \alpha} A + e^{(3+d) \alpha} \gamma(R) + L \norm{e^{\alpha \Phi^2} f}_{L_{t,x}^2}. \label{eq 18}
\end{align}

Choosing  $\alpha=cR^{\frac{4}{3}}$, we can hide the third term on the right-hand side of \eqref{eq 18} when $R \geq R_0(d, L)$. Then utilizing our  lower bound for $\norm{e^{\alpha \Phi^2}f}_{L_{t,x}^2}$ in \eqref{eq Lower}, we deduce that
\begin{align}
c R^{\frac{2}{3}} e^{(M-1)^2 \alpha} \leq e^{((M+1)^2+d-1) \alpha} \gamma (R) + e^{(3+d) \alpha} A  \\
c R^{\frac{2}{3}} e^{((M-1)^2- (3+d)) \alpha}  \leq e^{((M+1)^2+d-1 - (3+d))  \alpha } \gamma (R) +  A  .
\end{align}
Then by requiring $(M-1)^2 \geq (3+d)$ (note here the equality also works), we can hide $A$ into left-hand side of this inequality, hence for all $R\ge R_0(d, A,L)$, 
\begin{align}
c R^{\frac{2}{3}} e^{((M-1)^2- (3+d)) \alpha}  \leq e^{((M+1)^2+d-1 - (3+d))  \alpha } \gamma (R) .
\end{align}
Simplifying the two exponentials, we get 
\begin{align}
c R^{\frac{2}{3}} \leq e^{((M+1)^2+d-1 -(M-1)^2 ) \alpha } \gamma (R)  
\end{align}
which implies
\begin{align}
\gamma(R) \geq c R^{\frac{2}{3}} e^{-c R^{\frac{4}{3}}} ,
\end{align}
for all $R\ge R_0(d, A,L)$.

Now we finish the proof of Lemma \ref{lem Lower bound}.
\end{proof}

\section{Proof of Main Theorems}\label{sec Proof}

In this section, we prove the main theorems by contradiction. 
\subsection{Proof of Theorem \ref{thm Main1}}
If $u\not\equiv 0$, we can assume that $u$ satisfies the hypotheses of Lemma \ref{lem Lower bound}, after a translation, dilation, and multiplication by a constant. Thus, there exist constants $R_0(B), c'(B)$ depending on $B:=\norm{u}_{L_t^1 H_x^3([0,1] \times \mathbb{R}^d)}$  and a universal constant $c$ such that 
\begin{align}
\gamma(R) \ge c'R^{2/3} e^{-cR^{4/3}} \qquad \text{for all } R\ge R_0.
\end{align}

Let $\theta_R(x)\in C^\infty$ be non-decreasing, radial and such that 
\begin{align}
\theta_R (x) = 
\begin{cases}
1, & \text{if } \abs{x} \leq R-1, \\
0, & \text{if } \abs{x} \geq R. 
\end{cases}
\end{align}

Now take $\varphi_R(x)=1-\theta_R(x)$, where $\theta_R(x)$ is as defined above. Now, \eqref{eq ChainRule2} gives
\begin{align}
(i\partial_t +\D^2) (u\varphi_R) &= V_R (u\varphi_R) -\sum_{j=1}^d 4 (\partial_{x_j} \varphi)\partial_{x_j}^{\, 3} u + 6(\partial_{x_j}^{\, 2} \varphi) \partial_{x_j}^{\, 2} u + 4 (\partial_{x_j}^{\, 3} \varphi) \partial_{x_j} u + (\partial_{x_j}^{\, 4}\varphi) u := V_R (u\varphi_R) + H.
\end{align}
where $V_R (t,x)= \varphi_{R-1} V(t,x)$.
We apply Lemma \ref{lem Super-logcon} to the previous equation to find that  for $\alpha >1$
\begin{align}
&\quad \sup_{t\in[0,1]} \int_{|x|\ge R} |u(t,x)|^2 e^{\lambda|x|^{\alpha}/(10 d)^{\alpha}} \,dx \\
&\leq \sup_{t\in[0,1]} \int_{|x|\ge c_\alpha} |u\varphi_R (t,x)|^2 e^{\lambda|x|^{\alpha}/(10 d)^{\alpha}} \,dx \\
&\le c\left( \norm{u_0}_{L^2(e^{\lambda|x|^\alpha}  \, dx)}^2 +\norm{u_1}_{L^2(e^{\lambda|x|^\alpha} \, dx)}^2 \right) + c \int_0^1\int_{R-1\le|x|\le R}\left(|u|^2+|\nabla u|^2+|\nabla^{2} u|^2+|\nabla^{3} u|^2\right) e^{\lambda|x|^{\alpha}} \, dx\,dt \\
&  \quad  +c \sum_{j=1}^d \sum_{l=0}^3 \int_0^1 \int_{\mathbb{R}} |\partial_{x_j}^l(u\varphi)|^2 \, dx\,dt\\
& \le c_0 +c_0 \exp(\lambda R^\alpha).
\end{align}
In preparation for another application of Lemma \ref{lem Super-logcon}, we calculate 
\begin{align}
    (i\partial_t+ \D^2)\partial_{x_k} (u\varphi_R) & = V_R (\partial_{x_k}(u \varphi_R)) + \partial_{x_k} V_R (u \varphi_R)  \\
    & \quad - \sum_{j=1}^d  4  (\partial_{x_j}  \varphi_R)\partial_{x_j}^{\, 3} \partial_{x_k} u + 4  (\partial_{x_j} \partial_{x_k} \varphi_R)\partial_{x_j}^{\, 3} u  + 6(\partial_{x_j}^{\, 2}  \varphi_R) \partial_{x_j}^{\, 2} \partial_{x_k} u + 6(\partial_{x_j}^{\, 2} \partial_{x_k} \varphi_R) \partial_{x_j}^{\, 2} u \\
    & \quad  + 4 (\partial_{x_j}^{\, 3} \varphi_R) \partial_{x_j} \partial_{x_k} u  + 4 (\partial_{x_j}^{\, 3} \partial_{x_k} \varphi_R) \partial_{x_j} u  + (\partial_{x_j}^{\, 4}\varphi_R) \partial_{x_k}  u  +  (\partial_{x_j}^{\, 4} \partial_{x_k} \varphi_R) u  \\
    & = V_R (\partial_{x_k} (u \varphi_R)) + \widetilde{H}_k  .
\end{align}
Applying Lemma \ref{lem Super-logcon} to the above equation, we find that
\begin{align}
    & \quad \sup_{t\in[0,1]} \int_{|x|\ge R} |\partial_{x_k} u(t,x)|^2 e^{\lambda|x|^\alpha/(10d)^\alpha}\,dx \\
    &\le \sup_{t\in[0,1]} \int_{|x|\ge c_\alpha} |\partial_{x_k} (u\varphi_R)|^2(t,x) e^{\lambda|x|^\alpha/(10d)^\alpha}\,dx \\
    &\le c_0 + c \int_0^1\int_{R-1\le|x|\le R}\left(|u|^2+|\nabla u |^2+|\nabla^{2} u|^2+|\nabla^{3} u|^2+|\nabla^{4} u|^2 \right) e^{\lambda|x|^{\alpha}} \, dx\,dt \\
    & \quad + \int_0^1 \int_{\R^d} \abs{u \varphi_R \partial_{x_k}V_R}^2 e^{\lambda|x|^\alpha/(10d)^\alpha}\,dx  \, dt \\
    & \quad + \sum_{j=1}^d \sum_{l=0}^3 \int_0^1 \int_{\mathbb{R}} |\partial_{x_j}^l\partial_{x_k}(u\varphi)|^2\\
    &\le c + c \,\exp( \lambda R^\alpha).
\end{align}
We repeat this application of corollary up to the equations of  $(i\partial_t+\D^2)\partial_{x_k}^2 (u\,\varphi_R)$, $(i\partial_t+\D^2)\partial_{x_k}^3 (u\,\varphi_R)$  and combine all the conclusions to see that 
\begin{align}
    \sup_{t\in[0,1]} \int_{|x|\ge R}\left(|u|^2+|\nabla u|^2+|\nabla^{2} u|^2+|\nabla^3 u|^2  \right)e^{\lambda|x|^{\alpha}/(10 d)^\alpha} \, dx \le c_0 + c_0\,\exp(\lambda R^\alpha). \label{H3_bound}
\end{align}
Thus, for all $\mu$ such that $\mu R-1> R$,
\begin{align}
    c_0 \left(\mu R\right)^{2/3} e^{-c(\mu R)^{4/3}}e^{\lambda(\mu R-1)^\alpha /(10 d)^\alpha}&\le c_0 \gamma(\mu R)e^{\lambda(\mu R-1)^\alpha /(10 d)^\alpha} \\
    &\le c \int_0^1 \int_{\mu R-1\le |x|\le \mu R} \sum_{l=0}^3 |\partial_x^{\,l}u(t,x)|^2 e^{\lambda(\mu R-1)^{\alpha}/(10 d)^\alpha} \, dx\,dt \\
    &\le c \int_0^1 \int_{\mu R-1\le |x|\le \mu R} \sum_{l=0}^3 |\partial_x^{\,l}u(t,x)|^2 e^{\lambda|x|^{\alpha}/(10 d)^\alpha} \, dx\,dt \\
    &\le c \int_0^1 \int_{\mu R-1\le |x|} \sum_{l=0}^3 |\partial_x^{\,l}u(t,x)|^2 e^{\lambda|x|^{\alpha}/(10 d)^\alpha} \, dx\,dt \\
    &\le c \int_0^1 \int_{ R \le |x|} \sum_{l=0}^3 |\partial_x^{\,l}u(t,x)|^2 e^{\lambda|x|^{\alpha}/(10 d)^\alpha} \, dx\,dt \\
    &\le c+c\,\exp(\lambda R^\alpha),
\end{align}
where the last inequality is just \eqref{H3_bound}. Since $\alpha>4/3$, taking large enough $\mu$, we obtain a contradiction as the left hand side of the chain of inequalities is unbounded, while the right side is bounded. Thus, $u\equiv 0$ identically.

We have finished the proof of Theorem \ref{thm Main1}.

\subsection{Proof of Theorem \ref{thm Main2}}
    We consider the difference of the two solutions
    \begin{align}
        w(t,x)= u_1(t,x) - u_2(t,x),
    \end{align}
    which satisfy the equation
    \begin{align}
        i\partial_t w + \D^2 w = \left(\frac{F(u_1, \overline{u}_1) - F(u_2, \overline{u}_2)}{u_1-u_2}\right)w. 
    \end{align}
    We repeat the same arguments with the new potential
    \begin{align}
        V(t,x) = \frac{F(u_1, \overline{u}_1) - F(u_2, \overline{u}_2)}{u_1-u_2},
    \end{align}
    which satisfies the necessary hypotheses on $V(t,x)$ given the regularity on $F(u, \overline{u})$. By the same contradiction argument, we obtain Theorem \ref{thm Main2}.

\bibliography{references}
\bibliographystyle{plain}
\end{document}